\theoremstyle{plain}
\newtheorem{theorem}{Theorem}[section]
\newtheorem{lemma}[theorem]{Lemma}
\newtheorem{proposition}[theorem]{Proposition}
\theoremstyle{definition}
\newtheorem{definition}[theorem]{Definition}
\newtheorem{problem}[theorem]{Problem}
\theoremstyle{remark}
\newtheorem{remark}[theorem]{Remark}
\numberwithin{figure}{section}
\numberwithin{equation}{section}
\DeclareMathOperator{\ad}{ad}
\DeclareMathOperator{\imag}{Im}
\DeclareMathOperator{\Res}{Res}
\DeclareFontFamily{U}{mathx}{\hyphenchar\font45}
\DeclareFontShape{U}{mathx}{m}{n}{
      <5> <6> <7> <8> <9> <10>
      <10.95> <12> <14.4> <17.28> <20.74> <24.88>
      mathx10
      }{}
\DeclareSymbolFont{mathx}{U}{mathx}{m}{n}
\DeclareMathAccent{\widecheck}{0}{mathx}{"71}
\DeclareMathAccent{\wideparen}{0}{mathx}{"75}
\begin{document}

%
%

\newcommand{\rarr}{\rightarrow}

\newcommand{\bbC}{\mathbb{C}}
\newcommand{\bbR}{\mathbb{R}}
\newcommand{\bfC}{\mathbf{C}}
\newcommand{\calB}{\mathcal{B}}
\newcommand{\calC}{\mathcal{C}}
\newcommand{\calF}{\mathcal{F}}
\newcommand{\calI}{\mathcal{I}}
\newcommand{\calK}{\mathcal{K}}
\newcommand{\calL}{\mathcal{L}}
\newcommand{\calN}{\mathcal{N}}
\newcommand{\calR}{\mathcal{R}}
\newcommand{\calS}{\mathcal{S}}
\newcommand{\calZ}{\mathcal{Z}}

\newcommand{\wbar}{\overline{w}}
\newcommand{\zbar}{\overline{z}}
\newcommand{\etabar}{\overline{\eta}}

\newcommand{\btheta}{\overline{\vartheta}}

\newcommand{\ba}{\breve{a}}
\newcommand{\bb}{\breve{b}}
\newcommand{\bfA}{\mathbf{A}}
\newcommand{\bfM}{\mathbf{M}}
\newcommand{\balpha}{\breve{\alpha}}
\newcommand{\bbeta}{\breve{\beta}}
\newcommand{\brpsi}{\breve{\psi}}
\newcommand{\brho}{\breve{\rho}}
\newcommand{\brm}{\breve{m}}
\newcommand{\bgamma}{\breve{\gamma}}

\newcommand{\dotarg}{\, \cdot \, }

\newcommand{\eps}{\varepsilon}
\newcommand{\lam}{\lambda}
\newcommand{\calJ}{\mathcal{J}}
\newcommand{\calV}{\mathcal{V}}
\newcommand{\calM}{\mathcal{M}}
\newcommand{\calW}{\mathcal{W}}
\newcommand{\tildJ}{\tilde{J}}
\newcommand{\tildV}{\tilde{V}}
\newcommand{\qbar}{\overline{q}}

\newcommand{\alphabar}{\overline{\alpha}}
\newcommand{\betabar}{\overline{\beta}}
\newcommand{\lambar}{\overline{\lambda}}
\newcommand{\zetabar}{\overline{\zeta}}
\newcommand{\rhobar}{\overline{\rho}}

\newcommand{\bfN}{\mathbf{N}}

\newcommand{\cals}{\mathcal{s}}

\newcommand{\darr}{\downarrow}
\newcommand{\lambdabar}{\overline{\lambda}}
\newcommand{\mubar}{\overline{\mu}}
\newcommand{\dee}{\partial}
\newcommand{\dbar}{\overline{\partial}}
\newcommand{\ubar}{\overline{u}}
\newcommand{\vbar}{\overline{v}}

\newcommand{\bfm}{\mathbf{m}}
\newcommand{\bfs}{\mathbf{s}}
\newcommand{\dint}{\displaystyle{\int}}

\newcommand{\Twomat}[4]
{
	\left(
		\begin{array}{ccc}
			#1	&&	#2	\\
			\\
			#3	 &&	#4
		\end{array}
	\right)
}

\newcommand{\twomat}[4]
{
	\left(
		\begin{array}{cc}
			#1	&	#2	\\
			#3	 &	#4
		\end{array}
	\right)
}

\newcommand{\threemat}[9]
{
	\left(
		\begin{array}{ccc}
			#1	 &	#2 & #3	\\
			#4	 & #5 & #6 \\
			#7	 &	#8 & #9
		\end{array}
	\right)
}

\newcommand{\twodet}[4]
{
	\left|
		\begin{array}{cc}
			#1	&	#2	\\
			#3	 &	#4
		\end{array}
	\right|
}

\newcommand{\Twodet}[4]
{
	\left|
		\begin{array}{ccc}
			#1	&&	#2	\\
			\\
			#3	 &&	#4
		\end{array}
	\right|
}

\newcommand{\twovec}[2]
{
	\left(
		\begin{array}{c}
			#1		\\
			#2
		\end{array}
	\right)
}

\newcommand{\Twovec}[2]
{
	\left(
		\begin{array}{c}
			#1		\\
			\\
			#2
		\end{array}
	\right)
}

\newcommand{\norm}[2]{\left\| #1 \right\|_{#2}}
\newcommand{\bigO}[1]{\mathcal{O}\left( #1 \right)}

\title[$L^2$-Sobolev space Bijectivity]{$L^2$-Sobolev space bijectivity of the inverse scattering of a  $3 \times 3$  AKNS system}

\author{Jiaqi Liu}
\address[Liu]{Department of Mathematics, University of Toronto, Toronto, Ontario M5S 2E4, Canada}
\date{\today}
\begin{abstract}

We study the $L^2$-Sobolev space bijectivity of the direct and inverse scattering of the $3\times 3$ AKNS system associated to the Manakov system and Sasa-Satsuma equation. We establish the bijectivity on the weighted Sobolev space $H^{i,1}$ for $i=1,2$. 
\end{abstract}
\maketitle
\tableofcontents

\tikzset{->-/.style={decoration={
  markings,
  mark=at position .55 with {\arrow{triangle 45}} },postaction={decorate}}
  }

\section{Introduction}
We study the following $3\times 3$ AKNS system:
\begin{align}
\label{lax-x}
\psi_x&=L \psi=i\lambda \sigma \psi + U\psi
\end{align}
where
$$ \sigma= \threemat{-1}{0}{0}{0}{1}{0}{0}{0}{1} 
$$
and 
$$U(x)=\threemat{0 }{u}{v}{-\eps u^*}{0}{0}{-\eps v^*}{0}{0}.$$
Throughout this paper the $*$ sign refers to complex conjugation. This linear spectral problem is associated to the Manakov system \cite{Man73}, which is also known as the vector nonlinear Schr\"odinger equation:
\begin{subequations}
\label{MA}
\begin{equation}
iu_t+\dfrac{1}{2}u_{xx}+\eps(|u|^2+|v|^2)u =0
\end{equation}
\begin{equation}
iv_t+\dfrac{1}{2}v_{xx}+\eps(|u|^2+|v|^2)v =0
\end{equation}
\end{subequations}
Here $\eps=\pm 1$ where the $+(-)$ sign denotes the focusing (defocusing) case respectively. Like the scalar NLS equaition, the Manakov system is a universal model for the evolution of weakly nonlinear dispersive wave trains. It appears in many physical contexts, such as deep water waves, nonlinear optics, acoustics, Bose-Einstein condensation. In addition, a reduced version of \eqref{lax-x} also occurs in the inverse scattering transform for the Sasa-Satsuma equation \cite{SS91}:
\begin{equation}
\label{sasa}
u_t-u_{xxx}-6|u|^2u_x-3u(|u|^2)_x=0.
\end{equation}
This equation has particular use in describing the propagation of short pulses
in optical fibers \cite{SS91}.

It has been shown in \cite{PZK15} and \cite{L97} respectively that the generalized Manakov system and the Sasa-Satsuma equation are globally well-posed in $H^{1}(\bbR)$ and $H^2(\bbR)$ respectively. And the soliton-free long time asymptotics of \eqref{MA} and \eqref{sasa} with Schwartz initial conditions have been studied by \cite{GL18}, \cite{GL18-2} respectively. 

\eqref{lax-x} is a $3\times 3$ analogue to the $2\times 2$ AKNS system associated to the scalar nonlinear Schr\"odinger equation in the sense that we are able to formulate the transition matrix and transmission and reflection coefficients by solving the direct scattering problem. This may not be true for general $n\times n$ systems. In \cite{Zhou98} the author proved $L^2$-Sobolev space bijectivity for the inverse scattering transform of this $2\times 2$ AKNS system. More explicitly, for $i, j \geq 1$ given any 
$$q_0\in H^{i,j}(\bbR)=\lbrace f(x): f^{(i)}(x), x^j f(x) \in L^2(\bbR)\rbrace$$
the potential $q$ reconstructed from solving the direct and inverse scattering problem of the system
$$\psi_x=iz\twomat{\frac{1}{2}}{0}{0}{\frac{1}{2}}\psi+\twomat{0}{q_0(x)}{-\eps q_0^*(x)}{0}\psi$$
also belongs to $H^{i,j}(\bbR)$.

The goal of this paper is to establish the bijectivity of the inverse scattering transform for the $3\times 3$ system \eqref{lax-x} between certain $L^2$-Sobolev spaces. This is the first step towards establishing the bijectivity of the scattering-inverse scattering transform for the general $n\times n$ system. The result of this paper will also provide building blocks for rigorous study of the long time behavior, $N$-soliton stability in particular, of the corresponding integrable PDEs.  

The choice of different spaces is determined by the time flow of the specific integrable system. For example, the $t$ problem of the Lax pair associated to the Manakov system is 
\begin{align}
\label{lax-t}  
\psi_t &= i \lambda L \psi+ \dfrac{1}{2}\sigma(U_x+U^2)\psi          
\end{align}
thus the scattering data has time evolution
$$t\mapsto e^{t\lambda^2 \sigma} V(\lambda) e^{-t\lambda^2 \sigma}. $$
In order for the scattering data to persist in the same Sobolev space, we may choose $H^{1,1}(\bbR)$ as the space for the initial data $U_0$. On the other hand, the time evolution of the scattering data associated to the flow \eqref{sasa} is
$$t\mapsto e^{t\lambda^3 \sigma} V(\lambda) e^{-t\lambda^3 \sigma}$$
so we may choose $H^{2,1}(\bbR)$ as the space for the initial data when solving the Cauchy problem of \eqref{sasa}. Since the time evolution of the scattering data is explicit, for simplicity, throughout this paper we will suppress the $t$ variable.

One of the key issues of inverse scattering transform is to properly treat eigenvalues and spectral singularities introduced by the poles of the transmission coefficient (Remark \ref{cases}). We follow the structure of \cite{Zhou98} and separate the issue into three different cases. Each case has its own mathematical implications:
\begin{enumerate}
\item The first one holds for the defocusing case or for $\left\Vert U(x)  \right\Vert_{L^1}$ small in the focusing case. The solution to the Cauchy problem will only consist of a pure radiation background. 
\item The second one is the generic case which we will use to study the long time behavior of the solution with the presence of $N$ solitons. Compared with \cite[Appendix A]{K14}, we  prove the solvability of the Riemann-Hilbert problem in the presence of finitely many discrete spectra. 
\item The third one is the most general case for establishing the $L^2$-Sobolev space bijectivity. Here we give a self-contained and constructive proof of the solvability of Riemann-Hilbert problem defined on contour with self-intersections. 
\end{enumerate}
We state the main result here:
\begin{theorem}
\label{Theorem-main-1}
For $i=1,2$, given initial data $u_0, v_0\in H^{i,1}(\bbR)$, the reconstructed potential $u, v$ obtained through solving the direct and inverse scattering problem of \eqref{lax-x},  are also in 
$H^{i,1}(\bbR)$.
\end{theorem}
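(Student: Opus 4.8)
The plan is to follow the two-step scheme inherent in the inverse scattering transform: first establish that the direct scattering map $U_0 \mapsto V(\lambda)$ sends $H^{i,1}(\bbR)$ into an appropriate space of scattering data (bounded on $\bbR$, with the reflection coefficients lying in $H^{i,1}$ as functions of $\lambda$), and then establish that the inverse map reconstructs a potential back in $H^{i,1}(\bbR)$. Since the forward direction for the $2\times 2$ AKNS system was carried out in \cite{Zhou98}, I would adapt those estimates to the $3\times 3$ structure: write the Jost solutions as solutions of Volterra integral equations, use the triangular structure of $U$ and the Fredholm theory together with the $L^1\cap L^2$ bounds on $U_0$ and its $x$-derivatives (respectively second derivatives for $i=2$) to get analyticity/continuity of the transition matrix entries, and differentiate the integral equations in $\lambda$ to transfer the moment condition $xU_0 \in L^2$ into the $\lambda$-derivative bound on the scattering data. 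The weight structure $H^{i,1}$ is exactly chosen so that the factors $e^{t\lambda^2\sigma}$ or $e^{t\lambda^3\sigma}$ preserve the space, as noted in the introduction.

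Next I would set up the inverse problem as a Riemann--Hilbert problem for a $3\times 3$ matrix $m(x,\lambda)$ normalized to the identity at $\lambda = \infty$, with jump across $\bbR$ determined by $V(\lambda)$, and split into the three cases flagged in the introduction (Remark \ref{cases}): (1) no discrete spectrum, where a small-norm or vanishing-lemma argument gives unique solvability and the reconstruction formula $U(x) = \lim_{\lambda\to\infty} \lambda(\sigma m - m\sigma)$ or the analogous residue of $m_1$; (2) finitely many discrete eigenvalues with no spectral singularities, handled by removing the poles via explicit rational (Blaschke-type) factors and reducing to case (1); and (3) the general case with spectral singularities on $\bbR$, which requires a Riemann--Hilbert problem on a contour with self-intersections. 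For the reconstruction I would differentiate the RHP in $x$ (the Lax equation structure gives that $\partial_x m + i\lambda[\sigma,m]$ solves a homogeneous RHP, hence equals $Um$), and in $\lambda$ to control the large-$\lambda$ expansion, then feed the $H^{i,1}$-bounds on the reflection data through the Beals--Coifman solution representation $m = I + (\text{Cauchy operator applied to }(I-C_w)^{-1}w)$ to conclude that $U \in H^{i,1}$. The key technical point here is uniform boundedness of the resolvent $(I - C_w)^{-1}$ on $L^2$, which follows from the vanishing lemma / symmetry of $V$.

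The main obstacle I expect is the solvability and estimates of the Riemann--Hilbert problem on a contour with self-intersections in case (3): unlike the $2\times 2$ case, the $3\times 3$ structure produces a more intricate jump matrix factorization near the real axis where eigenvalues and spectral singularities collide, and one must verify the local algebraic compatibility (the ``cyclic'' or ``no-monodromy'' condition) of the jumps at each self-intersection point so that the singular Cauchy integral operator is bounded and the problem is Fredholm of index zero. I would address this by giving the self-contained constructive argument promised in the introduction: build an explicit local parametrix near each bad point, conjugate the RHP to remove the singular behavior, and then apply the vanishing lemma to the resulting regular problem. A secondary difficulty is bookkeeping the symmetry reductions imposed by $\eps = \pm 1$ and the conjugation structure of $U$, which must be propagated through all three cases to guarantee that the reconstructed $(u,v)$ satisfy the correct conjugation relations and hence genuinely solve the Manakov/Sasa--Satsuma hierarchy; this is where the focusing versus defocusing dichotomy (and the need for the $L^1$-smallness hypothesis in case (1) of the focusing problem) enters.
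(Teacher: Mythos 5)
Your proposal follows essentially the same overall scheme as the paper: Volterra integral equations plus integration by parts in $\lambda$ for the direct map, then a Beals--Coifman singular integral equation with resolvent bounds from the vanishing lemma for the inverse map, split into the same three cases. Two sub-steps differ in technique. For case II you propose removing the poles with explicit Blaschke-type rational factors and reducing to case I; the paper instead replaces each residue condition by a Schwarz-invariant jump across a small circle $\gamma_i$ around $z_i$ (and $\gamma_i^*$), which keeps the problem on a complete contour so that the vanishing lemma of \cite[Theorem 9.3]{Zhou89} and the Plemelj formula apply directly, and so that the circles contribute only exponentially decaying terms to $\calC_W$ for $x\geq 0$. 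In the $3\times 3$ setting the norming constants are vectors $\mathbf{C}_i\in\bbC^2$, so a Blaschke reduction would require matrix-valued dressing factors whose triangularity and symmetry you would have to track carefully; the circle replacement sidesteps this. For case III you propose local parametrices at the self-intersection points; the paper instead follows Zhou's cut-off construction: choose $x_0$ so that $U\chi_{(x_0,\infty)}$ is small in $L^1$, build an auxiliary Beals--Coifman solution $M^{(0)}$ inside a large circle $\Sigma_\infty$ outside of which $s_{11}$ has no zeros, and verify the matching (zero-sum) condition at $\pm S_\infty$ explicitly from the formulas for $v_{21}^+, v_{31}^+$, after which rational approximants $\omega_\pm$ restore the decay estimates. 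Your parametrix idea addresses the same compatibility issue you correctly identify, but the cut-off construction has the advantage of producing an explicit global factorization $V=V_-^{-1}V_+$ with $V_\pm-I\in H^{1,1}(\Sigma_\pm)$, which is what feeds the $H^{0,1}$ and $H^1$ estimates on the reconstructed potential. Both of your alternatives are viable in principle, but would require you to supply the symmetry and triangularity bookkeeping that the paper's constructions give for free.
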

\begin{proof}
For $i=1$, the results from Propositions \ref{D1}, \ref{prop-decay}, \ref{smooth} will establish the bijectivity of the first case above. The results from Propositions \ref{D2}, \ref{smooth-1} and  the results from Propositions \ref{scattering data}, \ref{prop-decay-1}, \ref{prop-smooth-1} will  establish the bijectivity of the second and third case respectively. For $i=2$, we will add Remarks \ref{lam^2}, \ref{smooth^2}.
\end{proof}

\begin{remark}
The linear spectral problem associated to \eqref{sasa} is often written as
\begin{equation}
\label{lax-sasa}
\psi_x=i\lambda \threemat{-1}{0}{0}{0}{-1}{0}{0}{0}{1} \psi + \threemat{0 }{0}{u}{0}{0}{u^*}{-u^*}{-u}{0}\psi
\end{equation}
One can simply rotate by 180 degrees and make the change of variable $\lambda\to -\lambda$ to convert \eqref{lax-sasa} to \eqref{lax-x}. 
\end{remark}
\begin{remark}
The  direct and inverse scattering maps in Theorem \ref{Theorem-main-1} are Lipschitz continuous between the corresponding $L^2$-Sobolev spaces. To establish the Lipschitz continuity,  we will use Proposition \ref{lemma-resol} which consists of a compact embedding argument similar to that of \cite[Section 4]{JLPS18} and the second resolvent identity. We leave out the details for simplicity.
\end{remark}
\section{Direct and Inverse Scattering Formalism}
 We begin with some matrix operation notations. Let $B$ be any $3\times 3$ matrix and $\ad\sigma $ the commutator, then
$$\ad \sigma (B)= \sigma B-B\sigma=\threemat{0}{-2B_{12}}{-2B_{13}}{2B_{21}}{0}{0}{2B_{31}}{0}{0}.$$
Also
$$B_x:=e^{i\lambda x\ad\sigma}B =\threemat{B_{11}}{  e^{-2i\lambda x} B_{12}}{  e^{-2i\lambda x} B_{13} } {e^{2i\lambda x} B_{21}}{ B_{22}}{B_{23}} {e^{2i\lambda x} B_{31}}{B_{32}}{B_{33}}$$

In  \cite[Chapter 2]{K14} the author developed the direct and inverse scattering transform for the Manakov system with zero boundary conditions. Analyticity properties of the Jost solutions and symmetry reductions are obtained under the condition $U\in L^1$. The author also  set up the Riemann-Hilbert problem for the inverse scattering problem. We list the main results of  \cite[Chapter 2]{K14} in this section and will mostly follow the its notation convention when studying the $L^2$-Sobolev space bijectivity. 

Note that the Jost solutions to Equation \eqref{lax-x} have two normalizations:
$$
\psi^\pm\rightarrow e^{i\lambda x \sigma}, \quad x\rightarrow \pm\infty
$$
Making the normalization
$$\psi^\pm(x,\lambda)=m^\pm(x,\lambda) e^{ix\lambda\sigma}$$
where
$$m^\pm\rightarrow I,\quad x\rightarrow\pm\infty $$
\eqref{lax-x} then becomes 
\begin{equation}
\label{m}
m_x^\pm= i\lambda  \ad \sigma m^\pm+ U(x)m^\pm.
\end{equation}
and  \eqref{m} is equivalent to the following integral equations:
\begin{subequations}
\begin{equation}
\label{m+}
m^+(x,\lambda)=I+\int_{+\infty}^x e^{i\lambda (x-y)\ad\sigma}U(y)m^+(y,\lambda)dy
\end{equation}
\begin{equation}
\label{m-}
m^-(x,\lambda)=I+\int_{-\infty}^x e^{i\lambda (x-y)\ad\sigma}U(y)m^-(y,\lambda)dy
\end{equation}
\end{subequations}
Through the uniqueness theory of ordinary differential equation, for $\lambda\in \bbR$, the two Jost solutions are related by a transition matrix
\begin{align}
\label{S}
\psi^-(x,\lambda)&=\psi^+(x,\lambda)S(\lambda)\\
                           &=\psi^+(x,\lambda)\threemat{s_{11}}{s_{12}}{s_{13}}{s_{21}}{s_{22}}{s_{23}}{s_{31}}{s_{32}}{s_{33}}
\end{align}

\begin{align}
\label{S-m}
m^-(x,\lambda)&=m^+(x,\lambda)e^{ix\lambda\ad\sigma}S(\lambda)\\
\nonumber
                           &=m^+(x,\lambda)\threemat{s_{11}}{ e^{-2i\lambda x} s_{12}}{e^{-2i\lambda x} s_{13}}{e^{2i\lambda x} s_{21}}{s_{22}}{s_{23}}{e^{2i\lambda x} s_{31}}{s_{32}}{s_{33}}
\end{align}
Jacobi's formula implies that $$\text{det}\psi^\pm(x,\lambda)=e^{i\lambda x}$$
from which it is easily seen that $\text{det}S(\lambda)=1$.  We also define
\begin{equation}
\label{T}
T(\lambda)=S^{-1}(\lambda).
\end{equation}
By standard Volterra theory (see \cite[Appendix A.1]{K14}), we have the following analyticity properties:
\begin{proposition}
\label{analyticity-m}
For $U(x) \in L^1(\bbR)$, the following columns of $m^\pm(x,\lambda)$ can be analytically extended into the corresponding half plane:
\begin{subequations}
\begin{equation}
m^-_1(x,\lambda), m^+_2(x,\lambda), m^+_3(x,\lambda): \quad \imag \lambda>0,
\end{equation}
\begin{equation}
m^+_1(x,\lambda), m^-_2(x,\lambda), m^-_3(x,\lambda): \quad \imag \lambda<0.
\end{equation}
\end{subequations}
\end{proposition}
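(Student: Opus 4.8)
The statement to prove is the analyticity of specific columns of the Jost matrices $m^\pm$ in appropriate half-planes, assuming only $U \in L^1(\bbR)$. The plan is to treat the integral equations \eqref{m+} and \eqref{m-} columnwise and to show that, after factoring out the exponential structure of $e^{i\lambda(x-y)\ad\sigma}$, each relevant column solves a Volterra integral equation whose kernel is bounded and analytic in $\lambda$ on the claimed half-plane. First I would write out, for a fixed column index, the integral equation componentwise and observe that the entries of $e^{i\lambda(x-y)\ad\sigma}$ acting on the $12,13,21,31$ positions carry factors $e^{\mp 2i\lambda(x-y)}$ while the diagonal and $23,32,22,33$ positions carry no exponential. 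For $m^-$ we integrate from $-\infty$ to $x$, so $x - y \geq 0$; hence $e^{2i\lambda(x-y)}$ decays when $\imag\lambda > 0$ and $e^{-2i\lambda(x-y)}$ decays when $\imag\lambda < 0$. For $m^+$ the sign of $x-y$ is reversed. Matching which entries of a given column pick up which exponential against the sign of $x-y$ is exactly what singles out the six columns listed.

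The core estimate is the standard Volterra/Neumann-series argument. For the column in question, say $m^-_1$ with $\imag\lambda > 0$, I would show that the Volterra operator
\[
(Kf)(x) = \int_{-\infty}^x e^{i\lambda(x-y)\ad\sigma}U(y) f(y)\, dy
\]
restricted to that column has operator kernel bounded in norm by $|U(y)|$ uniformly in $\lambda$ on the closed half-plane (because the surviving exponentials have modulus $\le 1$ there), so that the Neumann series $\sum_n K^n I$ converges absolutely and uniformly on compact subsets of $x$, with the familiar bound $\|m^-_1(x,\cdot) - e_1\| \le \exp\big(\int_{-\infty}^x |U(y)|\,dy\big) - 1$. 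Each term $K^n I$ is, for fixed $x$, an analytic function of $\lambda$ on the open half-plane: the integrand is jointly continuous and analytic in $\lambda$, the $y$-integration is over a finite-measure-against-$|U|\,dy$ domain, and one invokes Morera's theorem together with Fubini, or differentiates under the integral sign using the dominated convergence theorem justified by the uniform kernel bound. Uniform convergence of the series then transfers analyticity to the limit $m^-_1(x,\lambda)$. The other five columns are handled identically with the roles of the half-planes and of $m^+$ versus $m^-$ interchanged according to the exponential bookkeeping above.

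**Main obstacle.** The routine part is the Neumann-series convergence; the one place that genuinely needs care is verifying that the exponential factors appearing in each column really are bounded (not just on the open half-plane but up to the boundary $\bbR$, which is what one wants for the later scattering theory), and, more subtly, checking that when one iterates $K$ the off-diagonal exponentials compose correctly so that no term ever produces a factor $e^{2i\lambda(x-y)}$ with the wrong sign of $x-y$. Concretely, in $K^2$ one has a nested integral $\int_{-\infty}^x \int_{-\infty}^{y_1}$ with a product $e^{i\lambda(x-y_1)\ad\sigma}U(y_1)e^{i\lambda(y_1-y_2)\ad\sigma}U(y_2)$; one must track which matrix entry of the product of these two $\ad\sigma$-exponentials and two copies of $U$ lands in the first column, and confirm the accumulated exponent is always a nonpositive multiple of $\imag\lambda$ times a nonnegative number. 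This is a finite bookkeeping check on the $3\times 3$ structure of $U$ and $\sigma$, and once it is done the analyticity follows mechanically; I would organize it by noting that $U$ has nonzero entries only in positions $12,13,21,31$, so each application of $K$ toggles between the "first row/column block" and the "lower-right block," and the exponential signs alternate consistently with the ordering $x > y_1 > y_2 > \cdots$. Since the paper cites \cite[Appendix A.1]{K14} for exactly this, I would keep the write-up brief and refer there for the combinatorial details.
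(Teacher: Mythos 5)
Your argument is correct and is exactly the standard Volterra/Neumann-series proof that the paper delegates to \cite[Appendix A.1]{K14}: columnwise exponential bookkeeping, uniform kernel bound $|U(y)|$ on the closed half-plane, $1/n!$ convergence from the Volterra ordering, and Morera plus uniform convergence to transfer analyticity to the limit. The only remark worth making is that your ``main obstacle'' about composed exponentials in $K^2$ is a non-issue: since each single application of $K$, restricted to the relevant column, already has all its exponential factors bounded by $1$ in modulus on the closed half-plane, the operator bound $\|Kf(x)\|\le\int|U(y)|\,\|f(y)\|\,dy$ iterates directly and no tracking of accumulated exponents is needed.
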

As a consequence, we have the analytic extension for columns of $\psi^\pm$:
\begin{subequations}
\begin{equation}
\psi^-_1(x,\lambda), \psi^+_2(x,\lambda), \psi^+_3(x,\lambda): \quad \imag \lambda>0,
\end{equation}
\begin{equation}
\psi^+_1(x,\lambda), \psi^-_2(x,\lambda), \psi^-_3(x,\lambda): \quad \imag \lambda<0.
\end{equation}
\end{subequations}
\begin{proposition}
\label{analyticity-S}
For $U(x) \in L^1(\bbR)$, $S(\lambda)$ has the following integral representation:
\begin{equation}
\label{S-int}
S(\lambda)=I+\int_\bbR e^{-i\lambda y\ad\sigma}U(y)m^-(y,\lambda)dy
\end{equation}
and the entries of $S(\lambda)$, $T(\lambda)$ have the following analytic extensions:
\begin{subequations}
\begin{equation}
\label{analyticity-s+}
s_{11}(\lambda), t_{22}(\lambda), t_{23}(\lambda), t_{32}(\lambda), t_{33}(\lambda): \quad \imag \lambda>0,
\end{equation}
\begin{equation}
\label{analyticity-s-}
t_{11}(\lambda), s_{22}(\lambda), s_{23}(\lambda), s_{32}(\lambda), s_{33}(\lambda): \quad \imag \lambda<0,
\end{equation}
\end{subequations}
\end{proposition}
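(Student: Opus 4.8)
The plan is to derive explicit integral formulas for $S(\lambda)$ and for $T(\lambda)=S^{-1}(\lambda)$ in terms of $m^-$ and $m^+$ respectively, and then to read off the claimed half-plane analyticity entry by entry, using Proposition \ref{analyticity-m} together with the sparsity of $U$ and the explicit action of $e^{i\lambda x\ad\sigma}$ on a matrix. For the formula \eqref{S-int}: since $\ad\sigma(I)=0$ and the $\ad\sigma$-exponentials commute, applying $e^{-i\lambda x\ad\sigma}$ to the Volterra equation \eqref{m-} gives
\[
e^{-i\lambda x\ad\sigma}m^-(x,\lambda)=I+\int_{-\infty}^{x}e^{-i\lambda y\ad\sigma}U(y)m^-(y,\lambda)\,dy .
\]
Because $e^{-i\lambda x\ad\sigma}$ is an algebra automorphism, \eqref{S-m} yields $e^{-i\lambda x\ad\sigma}m^-(x,\lambda)=\bigl(e^{-i\lambda x\ad\sigma}m^+(x,\lambda)\bigr)S(\lambda)$; for $\lambda\in\bbR$ the map $e^{-i\lambda x\ad\sigma}$ is bounded and fixes $I$, and $m^+(x,\lambda)\to I$ as $x\to+\infty$, so the left-hand side tends to $S(\lambda)$. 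Letting $x\to+\infty$ on the right, with dominated convergence justified by $U\in L^1$ and the uniform-in-$y$ Neumann-series bound on $m^-$ from \cite[Appendix A.1]{K14}, produces \eqref{S-int}.

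Running the same argument with \eqref{m+} in place of \eqref{m-}, using $\psi^+=\psi^-T$ (i.e. $m^+=m^-\bigl(e^{i\lambda x\ad\sigma}T\bigr)$) and letting $x\to-\infty$, gives the companion representation
\[
T(\lambda)=I-\int_{\bbR}e^{-i\lambda y\ad\sigma}U(y)m^+(y,\lambda)\,dy .
\]

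To extract analyticity, note from the displayed action of $e^{i\lambda x\ad\sigma}$ that the $(1,1),(2,2),(2,3),(3,2),(3,3)$ entries of $e^{-i\lambda y\ad\sigma}B$ carry no oscillatory factor, while the $(1,2),(1,3)$ entries pick up $e^{2i\lambda y}$ and the $(2,1),(3,1)$ entries pick up $e^{-2i\lambda y}$. Inserting the sparsity of $U$ into \eqref{S-int} shows that $s_{11}=1+\int_{\bbR}(u\,m^-_{21}+v\,m^-_{31})\,dy$ depends only on the column $m^-_1$, whereas $s_{22},s_{23},s_{32},s_{33}$ depend only on the columns $m^-_2,m^-_3$; by Proposition \ref{analyticity-m} these extend analytically to $\imag\lambda>0$ and $\imag\lambda<0$ respectively, which is the $s$-part of \eqref{analyticity-s+}--\eqref{analyticity-s-}. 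The same computation on the formula for $T$ shows $t_{11}=1-\int_{\bbR}(u\,m^+_{21}+v\,m^+_{31})\,dy$ depends on $m^+_1$ (hence extends to $\imag\lambda<0$) while $t_{22},t_{23},t_{32},t_{33}$ depend on $m^+_2,m^+_3$ (hence extend to $\imag\lambda>0$). In each of these cases the relevant $(j,k)$ lies in the non-oscillatory group, so no $e^{\pm2i\lambda y}$ factor intervenes; analyticity of the integrals themselves follows from Morera's theorem and Fubini, using the uniform-in-$y$ bounds on the relevant columns of $m^\pm$ on the closed half-plane to dominate the integrand locally uniformly in $\lambda$.

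The only genuinely delicate points are the interchange of the limit $x\to\pm\infty$ with the integral in the two derivations above and the verification that the resulting $\lambda$-integrals are analytic; both reduce to the uniform boundedness of $m^\pm$ supplied by the standard Volterra estimates for $U\in L^1$, exactly as used for Proposition \ref{analyticity-m}. Everything else is the bookkeeping of which matrix entries survive, which is dictated entirely by the two structural facts above — the sparsity of $U$ and the non-oscillatory block of $e^{-i\lambda y\ad\sigma}$.
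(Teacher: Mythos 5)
Your proof is correct and follows essentially the same route the paper relies on: the paper simply defers Proposition \ref{analyticity-S} to the standard Volterra theory of \cite[Appendix A.1]{K14}, and your argument is exactly that standard argument spelled out — conjugating the Volterra equations by $e^{-i\lambda x\ad\sigma}$, passing to the limit $x\to\pm\infty$ to get the integral representations of $S$ and $T$, and reading off which entries involve only the analytically extendable columns of $m^\mp$ with no surviving oscillatory factor. The one mild stylistic difference is that you derive a separate integral representation for $T$ directly from \eqref{m+} rather than obtaining the $t_{jk}$ as cofactors of $S$ via $\det S=1$, which is the route some references take; both are equally valid.
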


\subsection{Symmetry Reductions}
\begin{proposition}{\cite[Lemma 2.14]{K14}}
If $\psi(x,\lambda)$ is a fundamental solution of the linear spectral problem \eqref{lax-x}, then  $J_\eps (\psi^\dagger(x,\lambda))^{-1}$ is also a fundamental solution where
$$J_\eps=\threemat{1}{0}{0} {0}{\eps}{0} {0}{0}{\eps}$$
and $\dagger$ refers to Hermitian adjoint. 
 Combining with the normalization of $\psi^{\pm}(x,\lambda)$ we have 
 \begin{equation}
 \label{symm-psi}
 \psi^{\pm}(x,\lambda)=J_\eps (\psi^\dagger(x,\lambda))^{-1}J_\eps.
 \end{equation}
\end{proposition}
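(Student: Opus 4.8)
The plan is to obtain the claimed symmetry directly from the structure of the Lax operator $L=i\lambda\sigma+U$ under conjugation by $J_\eps$ combined with Hermitian transposition. First I would record three elementary identities: $J_\eps^2=I$; $J_\eps\sigma J_\eps=\sigma$, since both matrices are diagonal and hence commute; and, crucially,
\[
J_\eps\,U(x)^\dagger\,J_\eps=-U(x).
\]
This last identity is exactly where the hypothesis $\eps=\pm1$ is used: conjugating $U^\dagger$ by $J_\eps$ scales the $(1,2)$ and $(1,3)$ entries by $\eps^2=1$ and the $(2,1)$ and $(3,1)$ entries by $\eps$, and a one-line computation shows the result is $-U$.

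Next I would differentiate. Taking the Hermitian adjoint of $\psi_x=(i\lambda\sigma+U)\psi$ gives $(\psi^\dagger)_x=\psi^\dagger(-i\lambdabar\sigma+U^\dagger)$, where I read $\psi^\dagger(x,\lambda)$ as $\psi(x,\lambdabar)^\dagger$ so that this is literally $\psi^\dagger$ for $\lambda\in\bbR$. Differentiating the relation $\psi^\dagger(\psi^\dagger)^{-1}=I$ then yields
\[
\big((\psi^\dagger)^{-1}\big)_x=(i\lambdabar\sigma-U^\dagger)(\psi^\dagger)^{-1}.
\]
Setting $\phi=J_\eps(\psi^\dagger)^{-1}$, inserting $I=J_\eps^2$, and using the three identities above,
\[
\phi_x=J_\eps(i\lambdabar\sigma-U^\dagger)J_\eps\,\phi=(i\lambdabar\sigma+U)\phi,
\]
so $\phi$ solves \eqref{lax-x} at spectral parameter $\lambdabar$ (equal to $\lambda$ on the real axis), and it is invertible because $\psi$ is. Since \eqref{lax-x} is a left-acting linear system, right multiplication by the constant matrix $J_\eps$ also preserves solutions, so $J_\eps(\psi^\dagger)^{-1}J_\eps$ is a fundamental solution as well; this gives the first assertion.

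For the identity \eqref{symm-psi} I would specialize to $\psi=\psi^\pm$ and pin the solution down by its behavior at infinity. As $x\to\pm\infty$ we have $\psi^\pm(x,\lambda)\to e^{i\lambda x\sigma}$, and using $\sigma^\dagger=\sigma$, $[J_\eps,\sigma]=0$, $J_\eps^2=I$,
\[
J_\eps\big(\psi^\pm(x,\lambda)^\dagger\big)^{-1}J_\eps\longrightarrow J_\eps\,e^{i\lambdabar x\sigma}\,J_\eps=e^{i\lambdabar x\sigma},
\]
which is $e^{i\lambda x\sigma}$ for $\lambda\in\bbR$. Hence $\psi^\pm(x,\lambda)$ and $J_\eps(\psi^\pm(x,\lambda)^\dagger)^{-1}J_\eps$ are two fundamental solutions of \eqref{lax-x} with the same limit as $x\to\pm\infty$; equivalently, after the normalization $\psi^\pm=m^\pm e^{ix\lambda\sigma}$, both corresponding $m$-factors satisfy the same Volterra integral equation \eqref{m+} (resp.\ \eqref{m-}), whose solution is unique for $U\in L^1(\bbR)$. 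Therefore they coincide, which is \eqref{symm-psi}.

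The crux — and the only place requiring genuine care — is the algebraic identity $J_\eps U^\dagger J_\eps=-U$: the structural fact that the $J_\eps$-conjugated Hermitian adjoint returns the Lax operator to a copy of itself, which hinges on $\eps^2=1$. Beyond that, the proof is just bookkeeping of the derivative-of-the-inverse formula and of the $\lambda\mapsto\lambdabar$ reflection, followed by ordinary-differential-equation uniqueness, so I anticipate no substantive obstacle.
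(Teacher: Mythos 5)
Your proof is correct; the paper itself gives no argument for this proposition (it is quoted verbatim from \cite[Lemma 2.14]{K14}), and your route --- the identity $J_\eps U^\dagger J_\eps=-U$, the derivative-of-the-inverse formula applied to the adjoint equation, and Volterra/ODE uniqueness against the common normalization $e^{i\lambda x\sigma}$ as $x\to\pm\infty$ --- is the standard proof of this symmetry reduction. One small slip in the prose: conjugation by $J_\eps$ scales the $(1,2)$ and $(1,3)$ entries by $\eps$, not $\eps^2$; the second factor of $\eps$ is already present in the corresponding entries $-\eps u$, $-\eps v$ of $U^\dagger$ itself, so the displayed identity $J_\eps U^\dagger J_\eps=-U$ is nevertheless exactly right.
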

We can expand \eqref{symm-psi} to obtain the following symmetry reductions:
\begin{subequations}
\begin{equation}
\psi^\pm_1(x,\lambda^*)^*=e^{-i\lambda x}J_\eps  \left[ \psi_2^\pm\times\psi_3^\pm  \right](x,\lambda),  \quad \imag k \gtrless 0,
\end{equation}
\begin{equation}
\psi^\pm_2(x,\lambda)^*=\eps e^{-i\lambda x}J_\eps  \left[ \psi_3^\pm\times\psi_1^\pm  \right](x,\lambda),  \quad k\in\bbR,
\end{equation}
\begin{equation}
\psi^\pm_3(x,\lambda)^*=\eps e^{-i\lambda x}J_\eps  \left[ \psi_1^\pm\times\psi_2^\pm  \right](x,\lambda),  \quad k\in\bbR
\end{equation}
\end{subequations}

Using the scattering relation \eqref{S} it is straightforward to check that for $\lambda\in\bbR$
\begin{equation}
\label{S-T}
S(\lambda)=J_\eps [T(\lambda)]^\dagger J_\eps
\end{equation}
and componentwise for $\lambda\in\bbR$ we have that
\begin{subequations}
\begin{equation}
\label{s-t-1}
s_{11}(\lambda)=t^*_{11}(\lambda), \, s_{12}(\lambda)=\eps t^*_{21}(\lambda), \, s_{13}(\lambda)=\eps t^*_{31}(\lambda),
\end{equation}
\begin{equation}
\label{s-t-2}
s_{21}(\lambda)=\eps t^*_{12}(\lambda), \, s_{22}(\lambda)=t^*_{22}(\lambda), \, s_{23}(\lambda)= t^*_{32}(\lambda),
\end{equation}
\begin{equation}
\label{s-t-3}
s_{31}(\lambda)=\eps t^*_{13}(\lambda), \, s_{32}(\lambda)= t^*_{23}(\lambda), \, s_{33}(\lambda)= t^*_{33}(\lambda).
\end{equation}
\end{subequations}
Using the Schwarz reflection principle and analyticity properties given by  \eqref{analyticity-s+}-\eqref{analyticity-s-}, we can deduce the following symmetry reductions:
$$s_{11}(\lambda)=t^*_{11}(\lambda^*),\,t_{22}(\lambda)=s^*_{22}(\lambda^*),\, t_{23}(\lambda)=s^*_{32}(\lambda^*),\, t_{33}(\lambda)=s^*_{33}(\lambda^*).$$
An important relation can be deduced from symmetries above and the fact that 
$\text{det}S(\lambda)=\text{det}T(\lambda)=1$:
\begin{equation}
\label{detS}
|s_{11}(\lambda)|^2+\eps( |s_{21}(\lambda)|^2+|s_{31}(\lambda)|^2)=1.
\end{equation}

\subsection{Beals-Coifman Solutions}
From Proposition \ref{analyticity-m} we can construct the following matrices with analyticity in $\bbC^\pm$:
\begin{subequations}
\begin{equation}
\label{Psi+}
\Psi^+(x,\lambda)=\left(\psi^-_1(x,\lambda),  \psi^+_2(x,\lambda), \psi^+_3(x,\lambda)    \right)
\end{equation}
\begin{equation}
\label{Psi-}
\Psi^-(x,\lambda)=\left(\psi^+_1(x,\lambda),  \brpsi^-_2(x,\lambda), \brpsi^-_3(x,\lambda)    \right)
\end{equation}
\end{subequations}
where $ \brpsi^-_2$ and $ \brpsi^-_3$ in \eqref{Psi-} are given by
\begin{subequations}
\begin{equation}
\label{brpsi-2}
\brpsi^-_2(x,\lambda)=-\eps e^{i\lambda x} J_\sigma \left(  m^{-*}_1 \times m^{+*}_3 \right)(x,\lambda^*),\quad \lambda\in\bbC^-
\end{equation}
\begin{equation}
\label{brpsi-3}
\brpsi^-_3(x,\lambda)=\eps e^{i\lambda x} J_\sigma \left(  m^{-*}_1 \times m^{+*}_2 \right)(x,\lambda^*) ,\quad \lambda\in\bbC^-.
\end{equation}
\end{subequations}
Using the relation given by \eqref{S} and \eqref{S-m} and the symmetries \eqref{s-t-1}-\eqref{s-t-3} we find 
\begin{subequations}
\begin{equation}
\label{brpsi-2'}
\brpsi^-_2(x,\lambda)= s_{33}(\lambda)\psi^-_2(x,\lambda)-s_{32}(\lambda)\psi^-_3(x,\lambda)
\end{equation}
\begin{equation}
\label{brpsi-3'}
\brpsi^-_3(x,\lambda)=-s_{23}(\lambda)\psi^-_2(x,\lambda)+s_{22}(\lambda)\psi^-_3(x,\lambda).
\end{equation}
\end{subequations}
\begin{remark}
\eqref{brpsi-2}-\eqref{brpsi-3} are useful for establishing the symmetry condition required by the vanishing lemma. We will use \eqref{brpsi-2'}-\eqref{brpsi-3'} to write down the appropriate jump matrix over $\bbR$.
\end{remark}
Straightforward computation gives 
\begin{subequations}
\begin{equation}
\label{det-s11}
\det \Psi^+(x,\lambda)=s_{11}(\lambda)e^{i\lambda x}
\end{equation}
\begin{equation}
\label{det-t11}
\det \Psi^-(x,\lambda)=(t_{11}(\lambda))^2e^{i\lambda x}.
\end{equation}
\end{subequations}
\begin{definition}
The discrete eigenvalues consist of the zeros $\lbrace \lambda_i  \rbrace_{i=1}^n$ of $s_{11}(\lambda)$ in $\bbC^+$ or by symmetry relation \eqref{s-t-1} the zeros $\lbrace \lambda_i^*  \rbrace_{i=1}^n$ of $t_{11}(\lambda)$ in $\bbC^-$. The spectral singularities are the zeros of $s_{11}(\lambda)$ for $\lambda\in \bbR$. 
\end{definition}
\begin{remark}
For $\eps=-1$ , from \eqref{detS}  it is easily seen that $|s_{11}|\geq 1$ for all $\lambda\in \bbR$. Also notice that the linear operator 
$-i\sigma{d}/{dx} +i\sigma U(x)$
is self-adjoint so that there is no real $L^2$-eigenvalues for the spectral problem
$$-i\sigma\dfrac{d}{dx} \psi+i\sigma U(x)\psi=\lambda \psi.$$
These together imply that $|s_{11}(\lambda)|>0$ in $\bbC^+$. Thus there are no discrete eigenvalues and spectral singularities for the defocusing case. The same conclusion does not hold for the focusing case thus both discrete eigenvalues and spectral singularities are allowed. 
\end{remark}
To define the Beals-Coifman solutions, we need the following notations:
\begin{align}
\psi^{++}(x,\lambda)&=\left( \psi^+_2(x,\lambda), \psi^+_3(x,\lambda) \right)\\
m^{++}(x,\lambda)&=e^{-i\lambda x}\psi^{++}(x,\lambda)\\
\nonumber
                             &=\left( m^+_2(x,\lambda), m^+_3(x,\lambda) \right)
\end{align}
\begin{align}
\brpsi^{--}(x,\lambda)&=\left( \brpsi^-_2(x,\lambda), \brpsi^-_3(x,\lambda) \right)\\
\brm^{--}(x,\lambda)&=e^{-i\lambda x}\brpsi^{--}(x,\lambda)
\end{align}
We now define the Beals-Coifman solutions as follows:
\begin{subequations}
\begin{equation}
\label{BC+}
M^+(x,\lambda)=\left( \dfrac{m^-_1(x,\lambda)}{s_{11}(\lambda)}, m^{++}(x,\lambda) \right) \quad \imag\lambda\geq 0
\end{equation}
\begin{equation}
\label{BC-}
M^-(x,\lambda)=\left( {m^+_1(x,\lambda)}, \dfrac{ \brm^{--}(x,\lambda)}{t_{11}(\lambda)} \right), \quad \imag\lambda\leq 0
\end{equation}
\end{subequations}
The Beals-Coifman solution has the following properties:
\begin{enumerate}
\item $\det M^\pm=1$;
\item  $\lim_{x\to +\infty} M^\pm(x,\lambda)=I$.
\end{enumerate}

\begin{remark}
\label{cases}
Since $s_{11}(\lambda)$ is on the denominator, there are three separate cases regarding the zeros of $s_{11}(\lambda)$:
\begin{enumerate}
\item[I] $s_{11}(\lambda)$ has no zero in $\bbC^+\cup \bbR$. 
\item[II] $s_{11}(\lambda)$ has finitely many first order zeros in $\bbC^+$. 
\item[III] $s_{11}(\lambda)$ has (possibly infinitely many) zeros  of arbitrary order in $\bbC^+\cup \bbR$.
\end{enumerate}
\end{remark}
For now we assume $s_{11}(\lambda)$ has no zero in $\bbC^+\cup \bbR$ and find that the Beals-Coifman solution \eqref{BC+}-\eqref{BC-} satisfies the following jump condition on $\bbR$:
\begin{equation}
\label{BC-jump}
M_+(x,\lambda)=M_-(x,\lambda)e^{i\lambda x\ad\sigma}V(\lambda)
\end{equation}
where
\begin{equation}
V(\lambda)=\threemat{1+\eps |\rho_1(\lambda)|^2+\eps|\rho_2(\lambda)|^2}{\eps \rho^*_1(\lambda)}{\eps \rho^*_2(\lambda)}{\rho_1(\lambda)}{1}{0}{\rho_2(\lambda)}{0}{1}
\end{equation}
with 
\begin{equation}
\label{rho1,2}
\rho_1(\lambda):=\dfrac{s_{21}(\lambda)}{s_{11}(\lambda)},\quad \rho_2(\lambda):=\dfrac{s_{31}(\lambda)}{s_{11}(\lambda)}
\end{equation}
and by symmetry
$$\rho_1^*(\lambda)=\dfrac{s_{21}^*(\lambda)}{s_{11}^*(\lambda)}=\dfrac{\eps t_{12}(\lambda)}{t_{11}(\lambda)},\quad \rho_2^*(\lambda)=\dfrac{s_{31}^*(\lambda)}{s_{11}^*(\lambda)}=\dfrac{\eps t_{13}(\lambda)}{t_{11}(\lambda).}$$
\subsection{An Auxiliary Scattering Matrix}
Notice that by Remark \ref{BC-M} the Beals-Coifman solution is normalized at $x=+\infty$. Also we can modify the Beals-Coifman solution  so that it is normalized at $x=-\infty$. To do this, we define the following auxiliary scattering matrix
\begin{equation}
\label{auxiliary}
A(z)=\begin{cases}
\threemat{s_{11}}{0}{0} {0}{\dfrac{t_{33}}{s_{11}}}{-\dfrac{t_{23}}{s_{11}}} {0}{-\dfrac{t_{32}}{s_{11}}}{\dfrac{t_{22}}{s_{11}}} &\quad\imag z\geq 0\\
\\
\threemat{\dfrac{1}{t_{11} } }{0}{0} {0}{s_{22}}{s_{23}} {0}{s_{32}}{s_{33}} &\quad\imag z\leq 0
\end{cases}
\end{equation}
and obtain
\begin{equation}
\label{m-tilde}
\tilde{M}(x,z)=M(x,z)A(z).
\end{equation}
and the jump relation of $\tilde{M}^\pm(x,z)$ is given by
\begin{equation}
\label{BC-jump-til}
\tilde{M}_+(x,\lambda)=\tilde{M}_-(x,\lambda)e^{i\lambda x\ad\sigma}\tilde{V}(\lambda)
\end{equation}
where
\begin{align}
\label{V-tilde}
\tilde{V}(\lambda) &=A^{-1}_-V(\lambda)A_+(\lambda)\\
\nonumber
                             &=\threemat{1}{\eps \tilde{\rho_1}^*}{ \eps\tilde{\rho_2}^*} { \tilde{\rho_1}}{1+ \eps |\tilde{\rho}_1|^2}{ \eps \tilde{\rho_2}^* \tilde{\rho_1} } {\tilde{\rho}_2 }{ \eps \tilde{\rho_1}^*\tilde{\rho_2} }{1+ \eps |\tilde{\rho}_2|^2}
\end{align}
where 
$$\tilde{\rho_1}=-\dfrac{t_{21}}{t_{11}},\quad \tilde{\rho_2}=-\dfrac{t_{31}}{t_{11}}.$$

\section{Case I: Pure Radiation}
In the following three sections, we  study the three cases listed on Remark \ref{cases} separately. In this section, for case I, we will establish the following Sobolev space bijectivity of   the direct scattering and inverse scattering map:
$$\mathcal{D}: H^{1,1}(\bbR)\ni\lbrace u_0, v_0 \rbrace\longmapsto \lbrace \rho_1(\lambda), \rho_2(\lambda) \rbrace\in H^{1,1}(\bbR)$$
$$\mathcal{I}: H^{1,1}(\bbR)\ni\lbrace\rho_1(\lambda), \rho_2(\lambda) \rbrace\longmapsto \lbrace u, v \rbrace\in H^{1,1}(\bbR).$$
This will provide building blocks for the study of the remaining two cases.
\subsection{Direct Scattering}
\begin{proposition}
\label{D1}
If $u_0, v_0\in H^{1,1}(\bbR)$,  then $\rho_1(\lambda), \rho_2(\lambda)\in H^{1,1}(\bbR)$.
\end{proposition}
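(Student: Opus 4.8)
The plan is to prove the mapping property for $\mathcal D$ by passing through the integral equations \eqref{m+}--\eqref{m-} and the representation \eqref{S-int} for $S(\lambda)$, exploiting the fact that $\rho_1 = s_{21}/s_{11}$ and $\rho_2 = s_{31}/s_{11}$ are, in Case I, ratios with a denominator bounded away from zero on $\bbR$. First I would record the elementary fact that in Case I the function $s_{11}(\lambda)$ is continuous on $\bbR$, tends to $1$ as $|\lambda|\to\infty$, and is nonvanishing, hence $1/s_{11}$ is bounded; moreover $s_{11}-1\in L^2\cap L^\infty$ because of the decay below. So it suffices to establish that $s_{21},s_{31},s_{11}-1 \in H^{1,1}(\bbR)$ (as functions of $\lambda$) and that $H^{1,1}(\bbR)$ is an algebra stable under division by a function bounded below, which follows from $H^{1,1}\hookrightarrow L^\infty$ and the product rule together with boundedness of derivatives of $1/s_{11}$.

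The technical core is therefore the two separate estimates: (i) the $L^2$-in-$\lambda$ bound with one power of $\lambda$, i.e. $\lambda\, s_{j1}(\lambda)\in L^2$, and (ii) the $\lambda$-derivative bound, $\partial_\lambda s_{j1}(\lambda)\in L^2$; likewise for $s_{11}-1$. For (i) I would use \eqref{S-int}: $s_{j1}(\lambda) = \int_\bbR \big(e^{-i\lambda y\ad\sigma}U(y)m^-(y,\lambda)\big)_{j1}\,dy$, and note that the relevant $(j,1)$-entries carry an oscillatory factor $e^{2i\lambda y}$. Writing $\lambda e^{2i\lambda y} = \tfrac{1}{2i}\partial_y e^{2i\lambda y}$ and integrating by parts in $y$, the power of $\lambda$ is traded for a $y$-derivative hitting $U(y)m^-(y,\lambda)$; since $U_0\in H^{1,1}$ gives $U'\in L^2$ and $xU\in L^2$, and since $m^-$ and $m^-_x = i\lambda\,\ad\sigma\, m^- + Um^-$ are controlled via the Volterra bounds from Proposition \ref{analyticity-m} (with the $\lambda$-growth absorbed by a second integration by parts or by the standard trick of estimating the off-diagonal columns of $m^- - I$), one gets an $L^2_\lambda$ bound by Plancherel in the $y$-variable. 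For (ii), differentiating \eqref{S-int} in $\lambda$ produces a term with a factor $y$ (from $\partial_\lambda e^{-i\lambda y\ad\sigma}$) times $U m^-$ — handled by $xU_0\in L^2$ — plus a term $\int e^{-i\lambda y\ad\sigma}U(y)\,\partial_\lambda m^-(y,\lambda)\,dy$; here I would invoke the differentiated Volterra equation for $\partial_\lambda m^-$, whose inhomogeneity again contains a factor $y$, so the same $xU_0\in L^2$ hypothesis closes the estimate, again via Plancherel.

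The main obstacle I anticipate is the interplay between the factor of $\lambda$ (or $\partial_\lambda$) and the $\lambda$-dependence of $m^\pm$ itself: naively $m^\pm$ only satisfies $\|m^\pm(\cdot,\lambda)-I\|_{L^\infty_x}\lesssim \|U\|_{L^1}$ uniformly in $\lambda$, with no decay in $\lambda$, so one cannot simply bound $|\lambda s_{j1}(\lambda)|$ pointwise and integrate. The resolution is the structural observation that the $(2,1)$ and $(3,1)$ entries of $e^{-i\lambda y\ad\sigma}U(y)m^-(y,\lambda)$ are genuinely oscillatory in $y$ with frequency $2\lambda$, so that $s_{j1}(\lambda)$ is (up to lower-order corrections) the Fourier transform in $y$ of an $L^1\cap L^2$, compactly-decaying-weight function; then multiplication by $\lambda$ and $\partial_\lambda$ correspond to differentiation and multiplication by $y$ on the physical side, and $H^{1,1}_\lambda$ membership is exactly the statement that those physical-side operations keep the function in $L^2_y$, which is precisely what $U_0\in H^{1,1}_x$ provides. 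Carefully separating the "already-oscillatory" leading part of $Um^-$ from the Neumann-series tail (which is smoother and decays faster) is the one place where some bookkeeping is unavoidable; once that decomposition is set up, both estimates reduce to Plancherel plus the Volterra bounds of Proposition \ref{analyticity-m}, and the algebra property finishes the proof.
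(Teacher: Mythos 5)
Your overall strategy coincides with the paper's: reduce to showing $s_{11}-1\in H^1$ and $s_{21},s_{31}\in H^{1,1}$ using that $|s_{11}|$ is bounded below in Case I, obtain the weighted bound $\lambda s_{j1}\in L^2$ by integrating by parts in $y$ in \eqref{S-int} (trading $\lambda$ for a $y$-derivative of $U$, with the non-oscillatory boundary term killed by diagonality/triangularity), and obtain the derivative bound from the differentiated Volterra equation, all via Plancherel/duality and the resolvent bound \eqref{resolvent-K}. The first half of your plan is essentially the paper's argument.

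There is, however, a concrete gap in your treatment of $\partial_\lambda s_{j1}$. You propose to differentiate \eqref{S-int} directly and to control the term $\int_\bbR e^{-i\lambda y\ad\sigma}U(y)\,\partial_\lambda m^-(y,\lambda)\,dy$ by the differentiated Volterra equation, asserting that its inhomogeneity ``again contains a factor $y$'' so that $xU\in L^2$ closes the estimate. But the factor produced by differentiating the kernel of \eqref{m-} is $(x-y')$ with $y'\le x$, and the inequality $|x-y'|\le |y'|$ only holds when $x\le 0$. For $x\to+\infty$ the bound on $\|\partial_\lambda m^-(x,\cdot)\|_{L^2_\lambda}$ grows linearly in $x$, and the resulting estimate requires $\int_0^\infty y\,|U(y)|\,dy<\infty$, i.e.\ $yU\in L^1$, which does \emph{not} follow from $U\in H^{0,1}$ (take $U(y)=(1+y)^{-2}(\log(2+y))^{-1}$). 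The paper avoids this by not differentiating \eqref{S-int} at all: it writes $S(\lambda)=m^+(0,\lambda)^{-1}m^-(0,\lambda)$ via \eqref{S-m} at $x=0$ and differentiates that product, so that only $\partial_\lambda m^+(x,\cdot)$ for $x\ge 0$ and $\partial_\lambda m^-(x,\cdot)$ for $x\le 0$ are ever needed; on those half-lines $|x-y|\le|y|$ holds throughout the domain of integration and the weight lands entirely on $U$, giving exactly $\|xU\|_{L^2}$. Your argument goes through once you insert this device (or an equivalent splitting of the $y$-integral in \eqref{S-int} at $0$ together with the two normalizations); as written, the derivative estimate does not close under the stated hypothesis.
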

\begin{proof}
Given the formulation of $\rho_1, \rho_2$ by \eqref{rho1,2} and the fact that $|s_{11}|>1$, to prove Proposition \ref{D1}, we need to establish the following mapping properties:
\begin{subequations}
\begin{equation}
\label{D:s11}
H^{1,1}(\bbR)\ni\lbrace u_0, v_0 \rbrace\longmapsto s_{11}(\lambda)-1\in H^1(\bbR)
\end{equation}
\begin{equation}
\label{D:s21,31}
H^{1,1}(\bbR)\ni\lbrace u_0, v_0 \rbrace\longmapsto \lbrace s_{21}(\lambda), s_{31}(\lambda) \rbrace\in H^{1,1}(\bbR)
\end{equation}
\end{subequations}
Recall that $S(\lambda)$ is given by the  integral equation \eqref{S-int}. Combining  \eqref{S-int} with \eqref{m-} we obtain:
\begin{align}
\label{S-K}
S(\lambda) &=I+\int_\bbR e^{-i\lambda y\ad\sigma}U(y)\left(m^-(y,\lambda) -I\right)dy+\int_\bbR e^{-i\lambda y\ad\sigma}U(y)dy\\
\nonumber
                  &=I+\int_\bbR e^{-i\lambda y\ad\sigma}U(y)\left(I-K^-_u\right)^{-1}K^-_u Idy+\int_\bbR e^{-i\lambda y\ad\sigma}U(y)dy\\
                  &=I+S_{1}(\lambda)+S_{2}(\lambda)
\end{align}
with the integral operator $K^\pm_u$ given by
\begin{equation}
\label{Ksu}
(K^\pm_u h)(x)=\int_{\pm\infty}^x e^{i\lambda(x-y)\ad \sigma} U(y)h(y)dy
\end{equation}
 By standard Fourier theory, it is easily seen that $S_2(\lambda)\in H^{1,1}(\bbR)$ given $U(x)\in H^{1,1}(\bbR)$. For $S_2(\lambda)$, we first note that 
 \begin{align}
 \label{test-phi}
 \left\Vert (K^\pm_u I)(x, \cdot) \right\Vert_{L^2_\lambda}&=\sup_{\substack{\phi\in C_0^\infty\\ \left\Vert \phi \right\Vert_{L^2}=1 }} \left\vert   \int_\bbR \phi(\lambda) (K^\pm_u I)(x,\lambda) d\lambda \right\vert\\
 \nonumber
                                                                                        &\lesssim \sup_{\substack{\phi\in C_0^\infty\\ \left\Vert \phi \right\Vert_{L^2}=1 }} \pm\int_x^{\pm \infty} \left( | \hat{\phi}(x-y)|+|\hat{\phi}(y-x)| \right)\left\vert U(y)\right\vert dy\\
  \nonumber                                                                                      
                                                                                        &\leq \left\Vert U\right\Vert_{L^2}
 \end{align}
Using Minkowski inequality, for \eqref{Ksu} we have 
$$ \norm{(K^\pm_u h)(x, \cdot)}{L^2_\lambda} \leq \left(   \pm\int_{\pm\infty}^x |U(y)|^2 dy \right)\sup_{\pm y\geq\pm x}\norm{h(y, \cdot)}{L^2_\lambda}.$$
We combine this with standard Volterra theory to obtain
\begin{equation}
\label{resolvent-K}
\norm{m^\pm-I}{L^\infty L^2_\lambda}=\norm{\left(I-K^-_u\right)^{-1}K^-_u I}{L^\infty_x L^2_\lambda}\leq e^{\norm{U}{L^1}}\norm{U}{L^2}.
\end{equation}
Another application of the Minkowski inequality implies $S_2(\lambda)\in L^2(\bbR)$.

We now show that $\lambda s_{21}(\lambda), \lambda s_{31}(\lambda)\in L^2(\bbR)$. We perform integration by parts  on $K_u^- I$:
\begin{align}
\label{KI}
K_u^- I &=\int_{-\infty}^x e^{i\lambda(x-y)\ad \sigma} U(y)dy\\
\nonumber
            &=-\dfrac{1}{\lambda}\int_{-\infty}^x (e^{i\lambda(x-y)\ad \sigma})' (i\ad\sigma)^{-1}U(y)dy\\
  \nonumber          
            &=-\dfrac{1}{\lambda}(i\ad\sigma)^{-1}U(x)+\dfrac{1}{\lambda}(i\ad\sigma)^{-1}\int_{-\infty}^x e^{i\lambda(x-y)\ad \sigma} U'(y)dy\\
   \nonumber
            &=h_1(x,\lambda)+h_2(x, \lambda).
\end{align}
We further write
\begin{align*}
(1-K^-_u)^{-1}K^-_u I &=h_1+K_u^- h_1+(1-K^-_u)^{-1}K^{2}_u h_1+ (1-K^-_u)^{-1}h_2\\
                                   &=h_1+g_1+g_2+g_3
\end{align*}
Thus we need to show that  the (2-1) and (3-1) entries of the following matrix
$$\int_\bbR e^{-i\lambda y\ad\sigma}U(y)\lambda (h_1+g_1+g_2+g_3) dy$$
are in $L^2_\lambda(\bbR)$.

It can be easily seen that 
$$\int_\bbR e^{-i\lambda y\ad\sigma}U(y) h_1(y,\lambda)dy$$
is a diagonal matrix thus makes no contribution to $s_{21}$ and $s_{31}$. 

For $\lambda g_3$, as in \eqref{resolvent-K} we have
\begin{equation}
\label{resolvent-k}
\norm{\left(I-K^-_u\right)^{-1}\int_{-\infty}^x (i\ad\sigma)^{-1} e^{i\lambda(x-y)\ad \sigma} U'(y)dy}{L^\infty_x L^2_\lambda}\leq e^{\norm{U}{L^1}}\norm{U'}{L^2}.
\end{equation}
An application of the Minkowski inequality implies $\lambda g_3\in L^2(\bbR)$.

For $g_1$ we notice that by trangularity:
\begin{align*}
\int_\bbR e^{-iy\lambda\ad\sigma}U(y) \lambda g_1(y,\lambda)dy &=\int_\bbR e^{-iy\lambda\ad\sigma}U(y) \left( \int_{-\infty}^y e^{i\lambda(y-z)\ad \sigma} U(z)(i\ad\sigma)^{-1}U(z)dz\right) dy\\
                                                                                                        &=\int_\bbR e^{-iy\lambda\ad\sigma}U(y) \left( \int_{-\infty}^y  U(z)(i\ad\sigma)^{-1}U(z)dz\right) dy.
\end{align*}
Using the same argument as in the proof of  \eqref{test-phi} we conclude that
$$\norm{\int_\bbR e^{-iy\lambda\ad\sigma}U(y) \lambda g_1(y,\lambda)dy}{L^2_\lambda}\lesssim \norm{U}{L^2}^2.$$

Finally for $\lambda g_2$, we first note that by triangularity, 
$$\lambda K^2_u h_1(x,\lambda)=\dfrac{1}{\lambda}\int_{-\infty}^x e^{i\lambda(x-y)\ad \sigma} U(y) \left( \int_{-\infty}^y U(z)(i\ad\sigma)^{-1}U(z)dz\right) dy.$$
Thus
$$\norm{(I-K^-_u)^{-1} \lambda K^2_u h_1} {L^\infty L^2_\lambda}\lesssim e^{\norm{U}{L^1}}\norm{U}{L^2}^2$$
and another application of the Minkowski inequality implies $\lambda g_2\in L^2(\bbR)$.

Using the scattering relation \eqref{S-m} and letting $x=0$, we can write $$S(\lambda)_\lambda=m^+(0,\lambda)^{-1}_\lambda m^-(0,\lambda)+m^+(0,\lambda)^{-1} m^-(0,\lambda)_\lambda.$$
By standard Volterra theory, $\norm{m^\pm(x,\lambda)}{L^\infty_x L^\infty_\lambda}<\infty$. So we only need to show $m^\pm(0,\lambda)_\lambda\in L^2_\lambda(\bbR)$. Here we are going to treat the $\pm$ cases simultaneously. Indeed we only need estimates on $m^+(x,\lambda)$ for $x\geq 0$  and on $m^-(x,\lambda)$ for $x\leq 0$. From \eqref{m+}-\eqref{m-} we compute
\begin{align}
\label{d-m}
\dfrac{\partial m}{\partial\lambda}&=\dfrac{\partial}{\partial\lambda}(K_u I)+\left[  \dfrac{\partial K_u}{\partial\lambda} \right](m-I)+K_u\left( \dfrac{\partial m}{\partial\lambda} \right)\\
                                                    &=H_1(x,\lambda)+H_2(x,\lambda)+K_u\left( \dfrac{\partial m}{\partial\lambda} \right)
\end{align}
From the resolvent bound \eqref{resolvent-K}, we only need to show that $H_1, H_2\in L^\infty_x L^2_\lambda$.
\begin{equation}
\label{K-partial-1}
H_1(x,\lambda)=\int_{\pm\infty}^x e^{i\lambda(x-y)\ad \sigma} (i\ad\sigma)(x-y)U(y)dy
\end{equation}
hence
\begin{align*}
\norm{H_1(x,\lambda)}{L^2_\lambda} &=\sup_{\substack{\phi\in C_0^\infty\\ \left\Vert \phi \right\Vert_{L^2}=1 }} \left\vert   \int_\bbR \phi(\lambda)  \left( \int_{\pm\infty}^x e^{i\lambda(x-y)\ad \sigma} (i\ad\sigma)(x-y)U(y)dy  \right) d\lambda \right\vert\\
                                                           &\lesssim \sup_{\substack{\phi\in C_0^\infty\\ \left\Vert \phi \right\Vert_{L^2}=1 }} \mp\int_x^{\pm \infty} \left( | \hat{\phi}(x-y)|+|\hat{\phi}(y-x)| \right) \left\vert x-y \right\vert   \left\vert U(y)\right\vert dy\\
                                                           &\lesssim \sup_{\substack{\phi\in C_0^\infty\\ \left\Vert \phi \right\Vert_{L^2}=1 }} \mp\int_x^{\pm \infty} \left( | \hat{\phi}(x-y)|+|\hat{\phi}(y-x)| \right) \left\vert y \right\vert   \left\vert U(y)\right\vert dy
\end{align*}
where we used the fact that $|x-y|<|y|$ for both $0<x<y$ and $y<x<0$. Finally an application of Schwarz inequality gives
\begin{equation}
\label{H_1}
\norm{H_1(x,\lambda)}{L^\infty L^2_\lambda}\leq \norm{xU(x)}{L^2}
\end{equation}
For $H_2(x,\lambda)$, note that
\begin{equation}
\label{K-partial-2}
H_2(x,\lambda)=\int_{\pm\infty}^x e^{i\lambda(x-y)\ad \sigma} (i\ad\sigma)(x-y)U(y)(m^\pm(y,\lambda)-I)dy,
\end{equation}
so that, by Minkowski inequality and Schwarz inequality
$$\norm{H_2(x,\lambda)}{L^\infty L^2_\lambda}\leq \left( \mp \int_{\pm\infty}^x|y|^2|U(y)|^2 dy  \right)^{1/2} \norm{m^\pm-I}{L^2L^2_\lambda} .$$
To see $\norm{m^\pm-I}{L^2L^2_\lambda}$ is finite, we write
$$\norm{m^\pm-I}{L^2L^2_\lambda}\leq \norm{K_u I}{L^2L^2_\lambda}+\norm{K_u (m^\pm-I)}{L^2L^2_\lambda}.$$
In \eqref{test-phi} we have shown that 
$$\norm{K_u I(x,\cdot)}{L^2_\lambda}\leq \left( \mp \int_{\pm\infty}^x |U(y)|^2 dy \right)^{1/2} $$
Reversing the order of integration gives
$$
\norm{K_u I}{L^2 L^2_\lambda}\leq \int_{\pm\infty}^0\int_{\pm\infty}^x |U(y)|^2 dydx
=\int_{\pm\infty}^0 |y||U(y)|^2 dxdy<\infty.$$
Finally, using Minkowski’s integral inequality and\eqref{resolvent-K} we can show that
\begin{align*}
\norm{K_u (m^\pm-I)}{L^2_\lambda} &\leq \mp\left( \int_{\pm \infty}^x |U(y)|dy \right)\norm{m^\pm-I}{L^\infty L^2_\lambda}\\
                                                           &\leq  \mp\left( \int_{\pm \infty}^x |U(y)|dy \right)e^{\norm{U}{L^1}}\norm{U}{L^2}
\end{align*}
and an application of the Hardy's inequality gives
\begin{align*}
\norm{K_u (m^\pm-I)}{L^2L^2_\lambda} &\leq \mp \left[\int_{\pm\infty}^0 \left( \int_{\pm \infty}^x |U(y)|^2dy \right)dx \right]^{1/2} e^{\norm{U}{L^1}}\norm{U}{L^2}\\
                                                               &\leq \norm{U(x)}{H^{0,1}}e^{\norm{U}{L^1}}\norm{U}{L^2}
\end{align*}
Now we have established both \eqref{D:s11} and \eqref{D:s21,31} thus complete the proof of Proposition \ref{D1}.
\end{proof}
\begin{remark}
\label{lam^2}
If we further assume that $u_0, v_0\in H^2(\bbR)$, then we are allowed to  write \eqref{KI} as
$$K_u^-I=\dfrac{1}{\lambda^2}\int_{-\infty}^x (e^{i\lambda(x-y)\ad \sigma}) (i\ad\sigma)^{(2)}U(y)dy$$
Integrating by parts twice and the same argument in the proof above will show that $\rho_1, \rho_2\in H^{0,2}(\bbR)$.
\end{remark}

\subsection{Inverse Scattering}
We study the following Riemann-Hilbert problem:
\begin{problem}
\label{RHP-1}
For fixed $x\in\bbR$ and  $\rho_1(\lambda), \rho_2(\lambda)\in H^{1,1}(\bbR)$, find a matrix $M(x,z)$ satisfying the following conditions:
\begin{enumerate}
\item[(i)] (Analyticity) $M(x,z)$ is analytic for $z\in\bbC\setminus\bbR$.
\item[(ii)] (Normalization) $M(x,z)\to I+\mathcal{O}(z^{-1})$ as $z\to \infty$.
\item[(iii)] (Jump relation) For each $\lambda\in \bbR$, $M(x,z)$ has continuous non-tangential boundary value $M_\pm(x,\lambda)$ as $z\to \lambda$ from $\bbC^\pm$ and the following jump relation holds
\begin{equation}
\label{jump}
M^+(x,\lambda)=M^-(x,\lambda)e^{i\lambda x\ad\sigma}V(\lambda)
\end{equation}
where
\begin{equation}
\label{V}
V(\lambda)=\threemat{1+\eps |\rho_1(\lambda)|^2+\eps|\rho_2(\lambda)|^2}{\eps \rho^*_1(\lambda)}{\eps \rho^*_2(\lambda)}{\rho_1(\lambda)}{1}{0}{\rho_2(\lambda)}{0}{1}
\end{equation}
\end{enumerate}
\end{problem}
The jump matrix $V$ admits the following factorization:
\begin{equation}
\label{factorize}
V(\lambda)=V_-(\lambda)^{-1}V_+(\lambda)=\left( I-W_-(\lambda) \right)^{-1}\left(I+W_+(\lambda)\right)
\end{equation}
where 
\begin{equation}
\label{W}
W_-(\lambda)=\threemat{0}{\eps \rho^*_1(\lambda) }{\eps  \rho^*_2(\lambda) } {0}{0}{0} {0}{0}{0},\quad W_+(\lambda)=\threemat{0}{0 }{0} {\rho_1(\lambda)}{0}{0} {\rho_2(\lambda)}{0}{0}
\end{equation}
\begin{proposition}
The Riemann-Hilbert problem \ref{RHP-1} has a unique solution.
\end{proposition}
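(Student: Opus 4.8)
\emph{Plan.} The solvability is obtained through the Beals--Coifman integral--equation formulation together with a vanishing lemma, following the scheme of \cite{Zhou98}. Using the factorization \eqref{factorize}, absorb the oscillatory conjugation into the factors by setting $w_\pm(\lambda):=e^{i\lambda x\ad\sigma}W_\pm(\lambda)$ and $w:=w_++w_-$; since $e^{i\lambda x\sigma}$ is unitary this does not affect boundedness, and because $\rho_1,\rho_2\in H^{1,1}(\bbR)\hookrightarrow L^1(\bbR)\cap L^2(\bbR)\cap L^\infty(\bbR)$ the matrices $w_\pm$ lie in $L^2\cap L^\infty$. Let $C_\pm$ denote the boundary values on $\bbR$ of the Cauchy transform and $C_w h:=C_+(h\,w_-)+C_-(h\,w_+)$, a bounded operator on $L^2(\bbR)$. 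Problem \ref{RHP-1} is then equivalent to solving $(I-C_w)\mu=I$ with $\mu-I\in L^2(\bbR)$: given such $\mu$, the matrix $M(x,z):=I+\tfrac{1}{2\pi i}\int_\bbR\frac{(\mu w)(s)}{s-z}\,ds$ is analytic off $\bbR$, tends to $I$ at $\infty$ (as $\mu w\in L^1$), and has the jump \eqref{jump}; the $H^{1,1}$ regularity of $\rho_1,\rho_2$ upgrades the boundary values to continuous ones.

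\emph{Fredholmness and index.} By the standard Beals--Coifman theory, $I-C_w$ is Fredholm on $L^2(\bbR)$: the jump $V$ and its inverse are bounded, and $V$ admits the pointwise bounded factorization \eqref{factorize} with $\det(I+W_+)=\det(I-W_-)=1$ (both $W_\pm$ are nilpotent). To compute the index, deform $\rho_j\rightsquigarrow t\rho_j$, $t\in[0,1]$: the operators $I-C_{w^{(t)}}$ form a norm--continuous family of Fredholm operators (the determinant identities persist for every $t$), so the index is independent of $t$; at $t=0$ one has $w^{(0)}=0$ and $I-C_{w^{(0)}}=I$, of index $0$. Hence $I-C_w$ has index $0$.

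\emph{Vanishing lemma.} It remains to show the homogeneous problem --- $M$ analytic on $\bbC\setminus\bbR$, $M(x,z)\to 0$ as $z\to\infty$, $M^+=M^-e^{i\lambda x\ad\sigma}V$ on $\bbR$ --- admits only $M\equiv 0$; equivalently, $\Ker(I-C_w)=\{0\}$. The structural input is the positivity $\tfrac12\bigl(V(\lambda)+V(\lambda)^\dagger\bigr)>0$ for $\lambda\in\bbR$. For $\eps=1$, \eqref{factorize} gives $V=(I+W_+)^\dagger(I+W_+)$, Hermitian and positive definite. For $\eps=-1$, a direct computation yields $\tfrac12(V+V^\dagger)=\mathrm{diag}\bigl(1-|\rho_1|^2-|\rho_2|^2,\,1,\,1\bigr)$, and $1-|\rho_1|^2-|\rho_2|^2=|s_{11}|^{-2}>0$ by \eqref{detS} and the Case I hypothesis that $s_{11}$ has no real zero. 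Conjugation by the unitary $e^{i\lambda x\sigma}$ preserves this, so $\tfrac12(v+v^\dagger)>0$ with $v:=e^{i\lambda x\ad\sigma}V$. Now if $M$ is a homogeneous solution, $h(z):=M(x,z)\,M(x,\bar z)^\dagger$ is analytic in $\bbC^+$, decays like $|z|^{-2}$ at infinity, and has boundary value $h^+(\lambda)=M^+(x,\lambda)\,M^-(x,\lambda)^\dagger=M^-(x,\lambda)\,v(\lambda)\,M^-(x,\lambda)^\dagger$. Closing the contour in $\bbC^+$ gives $\int_\bbR M^-v\,(M^-)^\dagger\,d\lambda=0$; adding the Hermitian adjoint of this identity produces $\int_\bbR M^-(\lambda)\bigl(v(\lambda)+v(\lambda)^\dagger\bigr)(M^-(\lambda))^\dagger\,d\lambda=0$, the integral of a positive semidefinite matrix, so $M^-(\lambda)=0$ for a.e.\ $\lambda$. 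Then $M$ vanishes identically in $\bbC^-$ and, by the jump relation, in $\bbC^+$ as well. Consequently $I-C_w$ is injective; with index $0$ it is invertible, which gives existence, and applying the vanishing lemma to the difference of two solutions gives uniqueness.

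\emph{Main obstacle.} Since the jump contour here is a single line without self-intersections, the Fredholm bookkeeping is routine; the substantive point is the vanishing lemma, i.e.\ extracting the positivity $\tfrac12(V+V^\dagger)>0$ from the $3\times 3$ scattering symmetries \eqref{s-t-1}--\eqref{s-t-3} and \eqref{detS}, and justifying the contour--closing rigorously --- the $|z|^{-2}$ decay of $h$, the integrability of $h^+$ on $\bbR$, and the a.e.\ vanishing conclusion --- so that the Cauchy--theorem manipulation is legitimate.
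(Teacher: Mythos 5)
Your proposal follows essentially the same route as the paper: reduce Problem \ref{RHP-1} to the Beals--Coifman singular integral equation $(I-\mathcal{C}_W)\mu=I$, establish Fredholmness and index zero, and kill the kernel via the positivity of $V+V^\dagger$ together with Zhou's vanishing lemma. The only difference is that you unpack the steps the paper delegates to citations of \cite{Zhou89} --- the explicit check that $\tfrac12(V+V^\dagger)>0$ in both cases $\eps=\pm1$ (using \eqref{detS} when $\eps=-1$), the homotopy argument for index zero, and the contour-closing proof of the vanishing lemma --- and these details are correct.
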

\begin{proof}
By standard Riemann-Hilbert theory, the existence and uniqueness of the solution to Problem \ref{RHP-1} is determined by the existence and uniqueness of the following singular integral equation:
\begin{align}
\label{SIE-1}
\mu(x,\lambda)&=I+\mathcal{C}_W \mu(x,\lambda)\\
\nonumber
                        &=I+C^+_\bbR(\mu W_{x-})(x,\lambda)+C^-_\bbR(\mu W_{x+})(x,\lambda)
\end{align}
where 
\begin{equation}
\label{mu}
\mu(x,\lambda)=M_+(I+W_{x+})^{-1}=M_-(I-W_{x-})^{-1}
\end{equation}
and $C^\pm$ are the Cauchy projections
$$C^\pm_\bbR f(z)=\lim_{\eps\to 0+}\dfrac{1}{2\pi i}\int_\bbR \dfrac{f(\lambda)}{\lambda-(z\pm i\eps)} d\lambda.$$
It is shown in \cite[Proposition 4.1, 4.2]{Zhou89} that the operator $I-\mathcal{C}_W$ is Fredholm and has Fredholm index zero. It is also easy to check that $V(\lambda)+V(\lambda)^\dagger$ is a positive definite matrix thus $ker(I-\mathcal{C}_W)=0$ by \cite[Proposition 9.3]{Zhou89}. The operator $I-\mathcal{C}_W$ is invertible and the solution to Problem \ref{RHP-1} is given by 
\begin{equation}
\label{M}
M(x,z)=I+\dfrac{1}{2\pi i}\int_\bbR \dfrac{\mu(x,\lambda)\left( W_{x+}+W_{x-} \right) }{\lambda-z}d\lambda.
\end{equation}
\end{proof}
From \eqref{M} we can reconstruct the potential $u, v$ by taking the limit \cite[(2.1.42)]{K14}
\begin{equation}
\label{reconstruction}
[u(x), v(x)]=-2i\lim_{z\to \infty}\left[ zM_-(x,z)_{21},  zM_-(x,z)_{31}  \right]
\end{equation}
Note that this reconstruction formula works for all three cases listed in Remark \ref{cases}.
The following two lemmas can be found in \cite{Zhou98}:
\begin{lemma}
\label{lemma-resol}
$\norm{(I-\mathcal{C}_W)^{-1}}{L^2}$ is bounded for any $x\in (c, \infty)$ and the norm of this resolvent operator only depends on $\norm{W_\pm}{H^{1/2+\eps}}$.
\end{lemma}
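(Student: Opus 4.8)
Since \eqref{SIE-1} is uniquely solvable for each fixed $x$ by the preceding proposition, the only content of the lemma is that $\norm{(I-\mathcal{C}_W)^{-1}}{L^2}$ stays bounded as $x$ ranges over $(c,\infty)$, with a bound controlled by nothing finer than $\norm{W_\pm}{H^{1/2+\eps}}$. The plan is to replace the kernel-triviality step of \cite[Proposition 9.3]{Zhou89} by a \emph{quantitative} lower bound for $I-\mathcal{C}_W$ on $L^2$ that is manifestly uniform in $x$. Write $W_{x\pm}(\lambda)=e^{i\lambda x\ad\sigma}W_\pm(\lambda)$ and $V_x(\lambda)=e^{i\lambda x\ad\sigma}V(\lambda)$, so that $\mathcal{C}_W$ depends on $x$ only through these modulations. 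Because $\sigma$ is real and diagonal, $e^{i\lambda x\sigma}$ is unitary for $\lambda\in\bbR$; hence $\norm{W_{x\pm}}{L^\infty}=\norm{W_\pm}{L^\infty}$, $\norm{V_x-I}{L^\infty}=\norm{V-I}{L^\infty}$, and $V_x+V_x^\dagger=e^{i\lambda x\sigma}(V+V^\dagger)e^{-i\lambda x\sigma}$ has the same eigenvalues as $V+V^\dagger$. A direct computation with \eqref{V} shows that, in the focusing case, $\det(V+V^\dagger)$ is a fixed positive number while every entry of $V+V^\dagger$ is bounded by $1+\norm{\rho_1}{L^\infty}^2+\norm{\rho_2}{L^\infty}^2\lesssim 1+\norm{W_\pm}{H^{1/2+\eps}}^2$ (in the defocusing case $V+V^\dagger$ is diagonal and uniformly positive definite); in either case there is $\delta=\delta(\norm{W_\pm}{H^{1/2+\eps}})>0$ with $V_x(\lambda)+V_x(\lambda)^\dagger\geq 2\delta I$ for all $\lambda,x\in\bbR$.

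Now fix $x$, take an arbitrary $\mu\in L^2(\bbR)$, and set $g:=(I-\mathcal{C}_W)\mu=\mu-C^+_\bbR(\mu W_{x-})-C^-_\bbR(\mu W_{x+})\in L^2(\bbR)$. Let $\Phi(z):=C_\bbR\big(\mu(W_{x+}+W_{x-})\big)(z)$, which is analytic on $\bbC\setminus\bbR$; since $\mu\in L^2$ and $W_\pm\in L^\infty$, its restrictions $\Phi^{(\pm)}$ to $\bbC^\pm$ lie in the Hardy spaces $H^2(\bbC^\pm)$, with $L^2$ boundary values $\Phi_\pm$. Using $C^+_\bbR-C^-_\bbR=I$, a short computation with the Plemelj formula gives $\Phi_+ +g=\mu(I+W_{x+})$ and $\Phi_- +g=\mu(I-W_{x-})$ on $\bbR$; combined with the factorization \eqref{factorize} this yields $\Phi_+ +g=(\Phi_- +g)V_x$, equivalently
\[
\Phi_+ =\Phi_- V_x+g(V_x-I)\qquad\text{on }\bbR.
\]
The matrix function $z\mapsto \Phi^{(+)}(z)\,\Phi^{(-)}(\bar z)^\dagger$ is holomorphic in $\bbC^+$ and, as a product of two $H^2(\bbC^+)$ functions, lies in $H^1(\bbC^+)$; hence $\int_\bbR \Phi_+\Phi_-^\dagger\,d\lambda=0$. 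Substituting the displayed relation, adding the Hermitian conjugate, and taking the trace produces
\[
2\delta\,\norm{\Phi_-}{L^2}^2\ \leq\ \int_\bbR \mathrm{tr}\big(\Phi_-(V_x+V_x^\dagger)\Phi_-^\dagger\big)\,d\lambda\ =\ -2\,\real\int_\bbR \mathrm{tr}\big(g(V_x-I)\Phi_-^\dagger\big)\,d\lambda\ \leq\ 2\norm{V_x-I}{L^\infty}\norm{g}{L^2}\norm{\Phi_-}{L^2},
\]
so $\norm{\Phi_-}{L^2}\leq \delta^{-1}\norm{V-I}{L^\infty}\norm{g}{L^2}$. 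Since $\mu=(\Phi_- +g)(I-W_{x-})^{-1}$ and $\norm{(I-W_{x-})^{-1}}{L^\infty}\lesssim 1+\norm{W_\pm}{L^\infty}$, we conclude $\norm{\mu}{L^2}\leq C\big(\norm{W_\pm}{H^{1/2+\eps}}\big)\,\norm{g}{L^2}$ with $C$ independent of $x$ and of $\mu$. Thus $I-\mathcal{C}_W$ is bounded below on $L^2$ uniformly in $x$; together with the index-zero Fredholm property \cite[Propositions 4.1, 4.2]{Zhou89} this makes $I-\mathcal{C}_W$ invertible with $\norm{(I-\mathcal{C}_W)^{-1}}{L^2}\leq C(\norm{W_\pm}{H^{1/2+\eps}})$ for every $x\in\bbR$, in particular for $x\in(c,\infty)$.

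The step needing the most care is the contour identity $\int_\bbR \Phi_+\Phi_-^\dagger\,d\lambda=0$: one must confirm that $\Phi^{(+)}(z)\Phi^{(-)}(\bar z)^\dagger$ genuinely lies in $H^1(\bbC^+)$ and that the arc at infinity contributes nothing, which is a routine but honest use of Hardy-space theory and the $L^2$-mapping properties of the Cauchy projections; the Sobolev threshold $H^{1/2+\eps}$ enters the argument only through the Fredholm input of \cite{Zhou89} and through the embedding $H^{1/2+\eps}\subset L^\infty\cap L^2$ used above. I also want to emphasize what one cannot do: because the reflection coefficients $\rho_1,\rho_2$ carry no half-plane analyticity, $\mathcal{C}_W$ does \emph{not} tend to $0$ in $\mathcal{B}(L^2)$ as $x\to+\infty$, so the bound cannot be extracted from a Neumann series in that limit --- the uniform positivity of $V_x+V_x^\dagger$ is indispensable. (For $x$ in a bounded window one could alternatively use that $x\mapsto\mathcal{C}_W$ is norm-continuous, since $\rho_1,\rho_2\in H^{1/2+\eps}\subset C_0(\bbR)$, together with the second resolvent identity and a compact-embedding argument in the spirit of \cite[Section 4]{JLPS18}.)
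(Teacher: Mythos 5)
Your argument is correct in its essentials, but it is a genuinely different proof from the one the paper invokes. The paper simply cites the compactness route of \cite[Proposition 4.2]{JLPS18}: one argues by contradiction, uses the compact embedding $H^{1}\hookrightarrow H^{1/2+\eps}$ to extract a convergent subsequence of scattering data, and transfers invertibility along the limit via the second resolvent identity; the uniform bound on bounded sets is obtained non-constructively. You instead make Zhou's vanishing lemma \emph{quantitative}: the identity $\int_\bbR \Phi_+\Phi_-^\dagger\,d\lambda=0$ (valid since the product lies in $H^1(\bbC^+)$), combined with the jump relation $\Phi_+=\Phi_-V_x+g(V_x-I)$ and the uniform positivity $V_x+V_x^\dagger\geq 2\delta I$, yields an explicit lower bound for $I-\mathcal{C}_W$ that is manifestly independent of $x$ because conjugation by the unitary $e^{i\lambda x\sigma}$ preserves both $\norm{V-I}{L^\infty}$ and the spectrum of $V+V^\dagger$. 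Your computations (the Plemelj identities $\Phi_\pm+g=\mu(I\pm W_{x\pm})$, the nilpotency of $W_-$ giving $(I-W_{x-})^{-1}=I+W_{x-}$, and the determinant computation $\det\bigl(\tfrac12(V+V^\dagger)\bigr)=1$ in the focusing case) all check out. What your approach buys is an explicit constant $C(\delta,\norm{W_\pm}{L^\infty})$ and a self-contained proof; what the compactness approach buys is robustness, since it also covers the augmented contours of Cases II and III, where off the real axis the jump matrices satisfy only the Schwarz symmetry $V(\lambda^*)=V(\lambda)^\dagger$ rather than $V+V^\dagger>0$, so your pure coercivity estimate on $\bbR$ would need the extra symmetrization step of \cite[Theorem 9.3]{Zhou89} there. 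Two minor caveats: (i) in the defocusing case your $\delta=1-\sup(|\rho_1|^2+|\rho_2|^2)$ is positive for admissible data (by the constraint \eqref{detS} it equals $\inf|s_{11}|^{-2}$) but is \emph{not} a function of $\norm{W_\pm}{H^{1/2+\eps}}$ alone, so it must be carried as an additional parameter of the scattering data --- this imprecision is, to be fair, already present in the lemma as stated; (ii) you should say explicitly that the coercive lower bound applies to $T^{-1}$ only after bijectivity is secured from injectivity plus the index-zero Fredholm property, which you do note at the end.
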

A detailed proof of this lemma can be find in  the proof Proposition 4.2 of \cite{JLPS18}. We notice that the embedding $H^{1}\hookrightarrow H^{1/2+\eps} $  is compact.  A combination of this result with the second resolvent identity will show that $\norm{(I-\mathcal{C}_W)^{-1}}{L^2}$ is Lipschitz continuous on any bounded subset of $H^{1}$.
\begin{lemma}
\label{lemma-decay}
For $x\geq 0$ we have 
\begin{equation}
\label{cauchy-bd}
 \norm{C^\pm _{W_{x\mp}} }{L^2} \leq \dfrac{\norm{\rho_1}{H^1}+\norm{\rho_2}{H^1}}{(1+x^2)^{1/2}}.
 \end{equation}
\end{lemma}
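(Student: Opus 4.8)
The plan is to exploit the off-diagonal structure of $W_\pm$ together with the oscillatory factor $e^{\pm 2i\lambda x}$ appearing in $W_{x\pm}$, and to convert the decay in $x$ into a statement about the Fourier transform of an $H^1$ function. Recall from \eqref{W} that $W_\pm$ are nilpotent (in fact $W_+W_- $ and $W_-W_+$ both vanish on the relevant block because the nonzero columns/rows do not overlap with the nonzero rows/columns after conjugation by $e^{i\lambda x\ad\sigma}$), and that the only nonzero entries of $W_{x-}$ are the $(1,2)$ and $(1,3)$ entries $\eps e^{-2i\lambda x}\rho_1^*(\lambda)$, $\eps e^{-2i\lambda x}\rho_2^*(\lambda)$, while the only nonzero entries of $W_{x+}$ are the $(2,1)$ and $(3,1)$ entries $e^{2i\lambda x}\rho_1(\lambda)$, $e^{2i\lambda x}\rho_2(\lambda)$. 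First I would reduce the operator norm estimate to scalar estimates: since $C^{\pm}_{W_{x\mp}} h = C^\pm_\bbR(h\,W_{x\mp})$ acts by multiplication against these scalar symbols followed by a bounded Cauchy projection, it suffices to bound, for $j=1,2$, the operator norm on $L^2_\lambda$ of $h\mapsto C^\pm_\bbR\bigl(e^{\mp 2i\lambda x}\rho_j^{(*)}(\lambda) h(\lambda)\bigr)$ by a constant times $\norm{\rho_j}{H^1}(1+x^2)^{-1/2}$.

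Next I would use the standard identity that for $x\ge 0$ the composition $C^+_\bbR \circ (\text{mult.\ by } e^{-2i\lambda x})$ — or more precisely the relevant half-line piece — is controlled by $\norm{e^{-2i(\cdot)x}\rho_j}{L^\infty}$ on a suitable Hardy space, but that only gives an $x$-independent bound. To get the decay one integrates by parts: write $e^{-2i\lambda x} = \tfrac{1}{-2ix}\partial_\lambda e^{-2i\lambda x}$, move the $\lambda$-derivative onto $\rho_j(\lambda)h(\lambda)$ and onto the Cauchy kernel. The term where the derivative hits the kernel is again a Cauchy-type operator (the derivative of $C^\pm_\bbR$ in the parameter is still bounded on $L^2$ after the oscillation is reinstated), and the term where it hits $\rho_j$ produces $\rho_j' \in L^2$, which is where the $H^1$ norm enters; the term where it hits $h$ is handled by duality/integration by parts again since $C^\pm_\bbR$ commutes with $\partial_\lambda$. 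Combining the trivial bound (good for $x\le 1$) with the integration-by-parts bound (which gains a factor $x^{-1}$, good for $x\ge 1$) yields the stated $(1+x^2)^{-1/2}$ decay after taking a geometric-mean-type interpolation, or simply by splitting into the two regimes.

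The main obstacle I expect is making the integration-by-parts argument rigorous at the level of the singular integral operator rather than a smooth kernel: $C^\pm_\bbR$ is a singular (Cauchy projection) operator, so one cannot naively differentiate under the integral sign. The clean way around this is to conjugate out the oscillation, i.e.\ to use that multiplication by $e^{\mp 2i\lambda x}$ intertwines $C^\pm_\bbR$ with translated/modulated Cauchy projections and then to realize $C^\pm_{W_{x\mp}}$ as convolution-type operators on the Fourier side, where the support of $\widehat{\rho_j}$ gets translated by $\mp 2x$; the $H^1$ bound $\norm{\rho_j}{H^1}=\norm{(1+|\xi|)\widehat{\rho_j}}{L^2}$ then directly yields $\norm{\widehat{\rho_j}}{L^1(|\xi|\ge x)}\lesssim \norm{\rho_j}{H^1}(1+x^2)^{-1/2}$ by Cauchy–Schwarz, and this tail is exactly what controls the truncated Cauchy projection after the oscillatory shift. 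This is the same mechanism used for the $2\times 2$ case in \cite{Zhou98}, and I would follow that computation, carefully tracking that the constant depends only on $\norm{\rho_1}{H^1}+\norm{\rho_2}{H^1}$ and not on $x$.
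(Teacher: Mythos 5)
Your final argument --- conjugating out the oscillation, observing that modulation by $e^{\mp 2i\lambda x}$ shifts the Fourier support of $\rho_j$ by $2x$, and bounding the tail $\|\widehat{\rho_j}\|_{L^1(|\xi|\ge 2x)}\lesssim \|\rho_j\|_{H^1}(1+x^2)^{-1/2}$ via Cauchy--Schwarz against the weight $(1+|\xi|)$ --- is precisely the ``standard Fourier theory'' the paper invokes by deferring to Lemma 2.3 of \cite{Zhou98}, so the proposal is correct and follows the same route. The preliminary integration-by-parts plan is a detour you yourself discard; only the Hardy-space/Fourier-shift mechanism is needed.
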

The proof is standard Fourier theory. See Lemma 2.3 of \cite{Zhou98}.
\begin{proposition}
\label{prop-decay}
If $\rho_1, \rho_2\in H^{1}(\bbR)$,  then $u(x), v(x)\in H^{0,1}(\bbR)$.
\end{proposition}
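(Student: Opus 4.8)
The plan is to read off the potential from the reconstruction formula \eqref{reconstruction} and the explicit representation \eqref{M}, and then estimate the resulting integral in $L^2(x\,dx)$, i.e. show $x\,u(x),\,x\,v(x)\in L^2(\bbR)$. Expanding \eqref{M} near $z=\infty$ gives
\[
[u(x),v(x)] = \frac{1}{\pi}\int_\bbR \bigl[\bigl(\mu(x,\lambda)(W_{x+}+W_{x-})\bigr)_{21},\ \bigl(\mu(x,\lambda)(W_{x+}+W_{x-})\bigr)_{31}\bigr]\,d\lambda,
\]
so the $(2,1)$ and $(3,1)$ entries of $\mu\,(W_{x+}+W_{x-})$ are what must be controlled. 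Writing $\mu = I + (\mathcal{C}_W\mu)$ as in \eqref{SIE-1} splits this into a \emph{leading term}, coming from $\mu=I$, which is essentially $\int_\bbR(W_{x+}+W_{x-})_{j1}\,d\lambda = \widehat{\rho}_{j-1}(\pm x)$ up to constants (so the map $x\mapsto$ leading term is in $L^2(x\,dx)$ precisely because $\rho_1,\rho_2\in H^1\subset H^{0,1}$ by Plancherel after a derivative is transferred), and a \emph{remainder term} involving $\mathcal{C}_W\mu$.

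For the remainder I would exploit the decay estimate already granted in Lemma \ref{lemma-decay}: the operator $C^\pm_{W_{x\mp}}$ has $L^2$-norm $O((1+x^2)^{-1/2})$, and by Lemma \ref{lemma-resol} the resolvent $(I-\mathcal{C}_W)^{-1}$ is bounded uniformly in $x\ge 0$. Therefore $\mu - I = (I-\mathcal{C}_W)^{-1}\mathcal{C}_W I$ has $L^2_\lambda$-norm bounded by $C(1+x^2)^{-1/2}\bigl(\norm{\rho_1}{H^1}+\norm{\rho_2}{H^1}\bigr)$. Pairing this $L^2_\lambda$ bound with the $L^2_\lambda$ bound on $W_{x\pm}$ (whose entries are $\rho_j$ or $\rho_j^*$, hence in $L^2$), Cauchy--Schwarz in $\lambda$ gives that the remainder contribution to $u(x)$ is $O((1+x^2)^{-1/2})$, which is manifestly in $L^2(x\,dx)$ near $x=+\infty$. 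The region $x\le 0$ is handled by the mirror-image argument — one renormalizes the Beals--Coifman solution at $x=-\infty$ using the auxiliary matrix $A(z)$ of \eqref{auxiliary}--\eqref{V-tilde}, so that the analogous decay estimate holds for $x\le 0$; alternatively one uses the $x$-reversed symmetry of the RHP directly.

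The main obstacle is the leading term, not the remainder: there the integrand $(W_{x+}+W_{x-})_{21}$ does not decay in $x$, so one cannot simply bound it pointwise. Instead one must recognize $\int_\bbR \rho_1(\lambda)e^{\mp 2i\lambda x}\,d\lambda$ as a Fourier transform and use that $\rho_1\in H^1(\bbR)=H^{1,0}$ implies, by Plancherel, $x\,\widehat{\rho_1}(x)\in L^2$; this is exactly where the weight on the physical side is purchased by the derivative on the spectral side, and it is the one step that genuinely uses $\rho_j\in H^1$ rather than merely $\rho_j\in L^2$. A minor technical point to be careful about is that $W_{x+}$ and $W_{x-}$ carry the oscillatory factors $e^{\pm 2i\lambda x}$ (they are the conjugated matrices $W_\pm$ in the notation $B_x = e^{i\lambda x\,\ad\sigma}B$), so the two contributions are Fourier transforms evaluated at $+x$ and $-x$ respectively, and both land in $L^2(x\,dx)$ by the same Plancherel argument. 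Assembling the leading and remainder estimates, and summing the $x\ge 0$ and $x\le 0$ pieces, yields $u,v\in H^{0,1}(\bbR) = L^2(\bbR)\cap L^2(x^2\,dx)$, completing the proof.
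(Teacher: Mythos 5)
Your treatment of the leading term (transferring the weight to a derivative via Plancherel, using $\rho_j\in H^1$) matches the paper's $\int_1$, and your reduction to $x\le 0$ via the auxiliary matrix is also the paper's route. But there is a genuine quantitative gap in your remainder estimate. You bound the remainder $\int_\bbR(\mu-I)(W_{x+}+W_{x-})\,d\lambda$ by Cauchy--Schwarz, pairing $\norm{\mu-I}{L^2_\lambda}=O((1+x^2)^{-1/2})$ against $\norm{W_{x\pm}}{L^2_\lambda}=O(1)$, obtaining $O((1+x^2)^{-1/2})$, and you then assert this is ``manifestly in $L^2(x\,dx)$ near $x=+\infty$.'' It is not: $x\cdot(1+x^2)^{-1/2}$ is bounded but not square-integrable at infinity, so a single factor of $(1+x^2)^{-1/2}$ falls short of what $H^{0,1}$ membership requires. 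One needs the full rate $O((1+x^2)^{-1})$, i.e.\ \emph{two} factors of decay.

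The paper gets this by splitting further, $\mathcal{C}_W\mu=\mathcal{C}_W I+\mathcal{C}_W(\mu-I)$, giving three pieces $\int_1+\int_2+\int_3$ rather than your two. The piece $\int_2=\int_\bbR(\mathcal{C}_W I)(W_{x+}+W_{x-})$ admits only one factor of decay, but it is saved structurally: the strict triangularity of $W_\pm$ (first row for $W_-$, first column for $W_+$) forces the products appearing in $\int_2$ to be block-diagonal, so $\int_2$ contributes nothing to the $(2,1)$ and $(3,1)$ entries that reconstruct $u,v$. For $\int_3=\int_\bbR(\mathcal{C}_W(\mu-I))(W_{x+}+W_{x-})$, naive Cauchy--Schwarz again yields only $O((1+x^2)^{-1/2})$; the paper instead uses $C^+-C^-=I$, triangularity, and Cauchy's theorem to rewrite the outer factors as $C^\mp(W_{x\pm})$, each of which carries its own $(1+x^2)^{-1/2}$ by Lemma \ref{lemma-decay}, so that the two decay factors multiply to give $O((1+x^2)^{-1})$. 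Without both the diagonality observation for $\int_2$ and the Cauchy-theorem manipulation for $\int_3$, the argument does not close; these are the missing ideas in your proposal.
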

\begin{proof}
We first assume $x\geq0$. Combining \eqref{reconstruction} with \eqref{M}, we will study the 
 following integral:
 \begin{align}
\label{int-decay} 
\int_\bbR \mu(x,\lambda)\left( W_{x+}+W_{x-} \right)d\lambda&=\int_\bbR (I-\mathcal{C}_W)^{-1} I \left( W_{x+}+W_{x-} \right) d\lambda\\
\nonumber
                                                                                                 &=\int_1+\int_2+\int_3
 \end{align}
 Where
 \begin{align*}
 \int_1 &=\int_\bbR \left( W_{x-}+W_{x+} \right)\\
 \int_2 &= \int_\bbR \left( \mathcal{C}_W I\right)\left( W_{x-}+W_{x+} \right)\\
 \int_3 &=\int_\bbR \left( \calC_W (\mu-I)  \right)\left( W_{x-}+W_{x+} \right)
 \end{align*}
 Clearly 
 $$\left\vert \int_1\right\vert \lesssim \dfrac{1}{1+x^2}$$ 
 can be obtained using standard Fourier theory. It is also clear that $\int_2$ is diagonal thus making no contribution to the (2-1) and (3-1) entries. For $\int_3$, we first note that by the previous two lemma,
 \begin{align}
 \label{mu-L2}
 \norm{\mu-I}{L^2}=\norm{(I-\calC_W)^{-1}\calC_W I }{L^2}\lesssim (1+x^2)^{-1/2}.
 \end{align}

Using the identity $C^+-C^-=I$, the triangularity of $W_\pm$ and the fact that $(C^\pm f)(C^\pm g)(z)$ has analytic extension to $\bbC^\pm$, an application of Cauchy's theorem gives 
\begin{align*}
\int_3 &=\int_\bbR \left( C^+(\mu-I)W_{x-}   \right)W_{x+}+\left( C^-(\mu-I)W_{x+}   \right)W_{x-}\\
         &=\int_\bbR \left( C^+(\mu-I)W_{x-}   \right)C^-(W_{x+})+\left( C^-(\mu-I)W_{x+}   \right)C^+(W_{x-})
\end{align*}
Lemma \ref{lemma-decay} and  the result of \eqref{mu-L2} together with Schwarz inequality and the fact that $H^{1}(\bbR)$ is an algebra imply
  $$\left\vert \int_3\right\vert \lesssim  \dfrac{1}{1+x^2}.$$ 
  Now we have shown that $xu(x), xv(x)\in L^2(\bbR)$ for $x\geq 0$. For the estimates when $x\leq 0$, we solve the Riemann-Hilbert problem with jump relation on $\bbR$ given by \eqref{BC-jump-til}. It is easy to check that $\tilde{V}$ admits the following triangular factorization 
  $$\tilde{V}=\threemat{1}{0}{0} {\tilde{\rho}_1}{1}{0} {\tilde{\rho_2}}{0}{1}\threemat{1}{\eps\tilde{\rho_1}^*}{\eps \tilde{\rho_2}^*} {0}{1}{0} {0}{0}{1}$$
that is opposite compared to that of $V$, so we can obtain the parallel decay results for $x\leq 0$. Also notice that for the auxiliary scattering matrix $A(z)$ given by \eqref{auxiliary}, we have $\lim_{z\to \infty}A(Z)=I$. Thus by \eqref{m-tilde} and \eqref{reconstruction}, we conclude that the two Riemann-Hilbert problems share the same reconstructed potential.
  \end{proof}
  \begin{proposition}
  \cite[Section 8]{Zhou89} The following relation holds:
  \begin{equation}
  \label{mu-diff}
  \dfrac{d}{dx}\mu(x,\lambda)=i\lambda  \ad \sigma \mu(x,\lambda)+ U(x) \mu(x,\lambda)
  \end{equation}
  where $U$ is formulated by the reconstructed potential \eqref{reconstruction}.
  \end{proposition}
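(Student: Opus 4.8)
The plan is to prove first the corresponding differential relation for the Beals--Coifman solution $M(x,z)$ of Problem \ref{RHP-1}, namely
\[
\partial_x M(x,z)=iz\,\ad\sigma\, M(x,z)+U(x)M(x,z),
\]
with $U(x)$ the reconstructed potential of \eqref{reconstruction}, and then to transfer it to $\mu$ by way of the algebraic relation \eqref{mu}. Since the matrices $W_\pm$ of \eqref{W} each have a single nonzero row or column, $W_{x\pm}^2=0$, so that $(I\mp W_{x\mp})^{-1}=I\pm W_{x\mp}$ and $\mu=M_+(I-W_{x+})=M_-(I+W_{x-})$; moreover $W_{x\pm}=e^{i\lambda x\,\ad\sigma}W_\pm$ gives at once $\partial_x W_{x\pm}=i\lambda\,\ad\sigma\, W_{x\pm}$. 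Granting the $M$-relation and using the Leibniz identity $\ad\sigma(AB)=(\ad\sigma\, A)B+A(\ad\sigma\, B)$, a short computation on $\mu=M_+(I-W_{x+})$ shows that the term generated by $\partial_x(I-W_{x+})$ is exactly the term produced when $\ad\sigma$ hits the factor $(I-W_{x+})$, so that $\partial_x\mu=i\lambda\,\ad\sigma\,\mu+U\mu$, which is \eqref{mu-diff}.

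To obtain the $M$-relation I would run the standard Liouville argument. First one checks that the solution of the singular integral equation \eqref{SIE-1} is $C^1$ in the parameter $x$, so that $M(x,z)$ and its sectionally analytic $x$-derivative both solve analytic problems in $z$; differentiating \eqref{SIE-1} gives $\partial_x\mu=(I-\mathcal{C}_W)^{-1}(\partial_x\mathcal{C}_W)\mu$, and since $\partial_x\mathcal{C}_W$ has symbols of the form $\lambda\,\rho_j(\lambda)\in L^2(\bbR)$ (using $\rho_1,\rho_2\in H^{1,1}(\bbR)$), the uniform resolvent bound of Lemma \ref{lemma-resol} makes this legitimate. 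Next, set $N(x,z):=\bigl(\partial_x M-iz\,\ad\sigma\, M\bigr)M^{-1}$, which is well defined because $\det M\equiv1$. Writing the jump \eqref{jump} as $M_+=M_-\widehat V$ with $\widehat V:=e^{i\lambda x\,\ad\sigma}V(\lambda)=e^{i\lambda x\sigma}V(\lambda)e^{-i\lambda x\sigma}$, one has $\partial_x\widehat V=i\lambda\,\ad\sigma\,\widehat V$, and a direct cancellation yields $\partial_x M_+-i\lambda\,\ad\sigma\, M_+=\bigl(\partial_x M_--i\lambda\,\ad\sigma\, M_-\bigr)\widehat V$; hence $N_+=N_-$ on $\bbR$ and $N$ extends to an entire function of $z$.

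Finally, from the normalization $M=I+M_1(x)/z+\mathcal{O}(z^{-2})$ one computes $N=-i\,\ad\sigma\, M_1+\mathcal{O}(z^{-1})$, so $N$ is bounded and Liouville's theorem forces $N\equiv -i\,\ad\sigma\, M_1(x)=:U(x)$. Because $\sigma=\mathrm{diag}(-1,1,1)$, the matrix $\ad\sigma\, M_1$ vanishes in the $(1,1),(2,2),(3,3),(2,3),(3,2)$ entries, so $U$ has exactly the off-diagonal shape of the potential in \eqref{lax-x}; the Hermitian-type symmetry $M(x,z)=J_\eps M(x,z^*)^{-\dagger}J_\eps$ inherited from the symmetry reductions of Section 2 forces $M_1=-J_\eps M_1^\dagger J_\eps$, giving $U_{21}=-\eps U_{12}^*$ and $U_{31}=-\eps U_{13}^*$; and reading off the relevant entries of $-i\,\ad\sigma\, M_1$ reproduces formula \eqref{reconstruction}, so this $U$ is precisely the reconstructed potential. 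I expect the only genuine work to be the first step of the middle paragraph --- justifying differentiation of the solution of \eqref{SIE-1} in $x$ (differentiating under the Cauchy integrals, bounding $\mathcal{C}_{\partial_x W_{x\pm}}$ on $L^2$, and invoking the uniform bound on $(I-\mathcal{C}_W)^{-1}$), which is exactly the computation carried out in \cite[Section 8]{Zhou89}; the remaining manipulations are routine.
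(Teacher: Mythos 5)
The paper offers no proof of its own here---it simply cites \cite[Section 8]{Zhou89}---and your dressing/Liouville argument (derive $\partial_x M = iz\,\ad\sigma\,M + UM$ from the $x$-dependence of the jump plus the normalization, identify $U=-i\,\ad\sigma\,M_1$ with the reconstructed potential, then transfer to $\mu$ via the nilpotency of $W_\pm$) is precisely the standard proof carried out in that reference. Your identification of the one genuinely technical step, namely justifying $\partial_x\mu=(I-\mathcal{C}_W)^{-1}(\partial_x\mathcal{C}_W)\mu$ under the hypothesis $\rho_1,\rho_2\in H^{1,1}(\bbR)$, matches where the real work lies in Zhou's argument, so the proposal is correct and follows essentially the same route.
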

  \begin{proposition}
  \label{smooth}
If $\rho_1, \rho_2\in H^{1,1}(\bbR)$,  then $u(x), v(x)\in H^1(\bbR)$.
\end{proposition}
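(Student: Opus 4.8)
The plan is to establish the regularity $u, v \in H^1(\bbR)$ by showing that the reconstruction formula \eqref{reconstruction} produces a potential whose $x$-derivative is in $L^2$. The natural route is to differentiate the representation \eqref{M} in $x$ and to exploit the linear ODE \eqref{mu-diff} satisfied by $\mu$. First I would observe that, by \eqref{reconstruction}, $u(x)$ and $v(x)$ are (up to the factor $-2i$) the $(2\text{-}1)$ and $(3\text{-}1)$ entries of the residue matrix
\[
M_1(x) = -\frac{1}{2\pi i}\int_\bbR \mu(x,\lambda)\bigl(W_{x+}+W_{x-}\bigr)\, d\lambda,
\]
so that $u'(x), v'(x)$ are the corresponding entries of $\frac{d}{dx}M_1(x)$. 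Differentiating under the integral sign, the derivative hits three places: the explicit $e^{\pm 2i\lambda x}$ oscillations inside $W_{x\pm}$, which bring down a factor of $\lambda$; and the factor $\mu(x,\lambda)$ itself, for which I would substitute \eqref{mu-diff}. The key algebraic point, exactly as in the $2\times 2$ case of \cite{Zhou98}, is that after using \eqref{mu-diff} the $i\lambda\,\ad\sigma\,\mu$ term combines with the $\lambda$ produced by differentiating the oscillatory factors, and a cancellation (triangularity of $W_\pm$ together with $\ad\sigma$ acting diagonally) removes the apparently dangerous linear-in-$\lambda$ growth, leaving an integrand that is a finite sum of products of entries of $\mu$, of $W_{x\pm}$, of $\lambda W_{x\pm}$ (which is controlled because $\rho_1,\rho_2\in H^{1,1}$ means $\lambda\rho_j\in L^2$ and $\rho_j' \in L^2$), and of the reconstructed $U(x)$ itself.

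Concretely, the steps are: (1) write $\frac{d}{dx}M_1(x)$ as a sum of integrals, isolating the term containing $\frac{\partial\mu}{\partial x}$; (2) replace $\frac{\partial\mu}{\partial x}$ by $i\lambda\,\ad\sigma\,\mu + U\mu$ via \eqref{mu-diff}; (3) in the resulting $\int \lambda(\cdots)\,d\lambda$ expressions, integrate by parts in $\lambda$ where a bare $\lambda$ multiplies an oscillation $e^{\pm 2i\lambda x}$, or equivalently rewrite $\lambda e^{\pm 2i\lambda x} = \mp\frac{1}{2i}\partial_x e^{\pm 2i\lambda x}$ and absorb it; and (4) estimate each remaining integral in $L^2_x$. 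For step (4) the main tools are: the uniform resolvent bound Lemma \ref{lemma-resol} (valid for $x$ in a half-line, with the other half-line handled via the $\tilde M$, $\tilde V$ problem as in Proposition \ref{prop-decay}), the decay estimate $\norm{\mu - I}{L^2_\lambda}\lesssim (1+x^2)^{-1/2}$ from \eqref{mu-L2}, the Cauchy-projection bound Lemma \ref{lemma-decay}, Schwarz's inequality, and the algebra property of $H^1(\bbR)$. The $\int \lambda\, W_{x\pm}\, d\lambda$-type terms are handled by pairing one factor $\lambda\rho_j \in L^2_\lambda$ against $\mu - I \in L^2_\lambda$ (or against a Cauchy projection of another $W$ factor, after the usual $C^+-C^- = I$ contour-deformation trick used for $\int_3$ in Proposition \ref{prop-decay}), and the $U\mu$ term is handled by noting $U(x)$ is already known to be bounded and in $H^{0,1}$ from Proposition \ref{prop-decay}, so it multiplies an $L^2_x$ quantity.

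I expect the main obstacle to be step (3): verifying that the linear-in-$\lambda$ terms genuinely cancel rather than merely being individually square-integrable (they are not, since $\lambda\,\mu$ need not decay). This is where the precise structure of $\ad\sigma$, the triangular form of $W_\pm$ in \eqref{W}, and the identity \eqref{mu-diff} must be combined carefully; one must track which matrix entries survive and confirm that the surviving combination is exactly $\partial_x(\text{something integrable})$ plus lower-order (in $\lambda$) pieces. A secondary technical point is justifying differentiation under the integral sign and the integration by parts in $\lambda$, which requires knowing $\mu(x,\cdot)$ and $\partial_\lambda\mu(x,\cdot)$ have enough decay and regularity; this follows from the singular integral equation \eqref{SIE-1} and the mapping properties of $\mathcal{C}_W$ on $H^{1}$ together with Lemma \ref{lemma-resol}, analogously to the direct-scattering estimates in Proposition \ref{D1}. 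Once the cancellation in step (3) is in hand, the remaining estimates are routine and parallel those already carried out in Proposition \ref{prop-decay}, with $H^{1,1}$ (rather than merely $H^1$) control of $\rho_1,\rho_2$ being exactly what is needed to absorb the extra factor of $\lambda$.
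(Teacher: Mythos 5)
Your proposal is correct and follows essentially the same route as the paper: differentiate the reconstruction integral in $x$, substitute \eqref{mu-diff}, and estimate the resulting $i\lambda\ad\sigma$ and $U\mu$ contributions using the $H^{1,1}$ control of $\rho_1,\rho_2$ (so that $\lambda\rho_j\in L^2$) and Proposition \ref{prop-decay} with Cauchy--Schwarz, respectively. The only clarification worth making is that the ``cancellation'' you flag as the main obstacle is automatic: because $\ad\sigma$ is a derivation, the two linear-in-$\lambda$ terms assemble into $i\lambda\,\ad\sigma\bigl(\mu e^{i\lambda x\ad\sigma}(W_{+}+W_{-})\bigr)$, in which the factor $\lambda$ always sits against a $W_\pm$ factor rather than against $\mu$ alone, so no delicate cancellation or integration by parts in $\lambda$ is required.
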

\begin{proof}
Using the relation \eqref{mu-diff} and the fact that $\ad\sigma$ is a derivation, we have that
$$\int_\bbR \dfrac{d}{dx}\left(\mu e^{ix\lambda\ad\sigma}\left( W_{+}+W_-  \right)  \right) d\lambda= \int_\bbR \left( i\lambda\ad\sigma +U  \right)\left(\mu e^{ix\lambda\ad\sigma}\left( W_{+}+W_-  \right)  \right) d\lambda. $$
Using the fact that $\rho_1, \rho_2\in H^{1,1}$ we can follow the same argument in the proof of Proposition \ref{prop-decay} to see that 
$$\int_\bbR  i\lambda\ad\sigma \left(\mu e^{ix\lambda\ad\sigma}\left( W_{+}+W_-  \right)  \right) d\lambda \in L^2_x .$$
Moreover, Proposition \ref{prop-decay} and an application of Cauchy Schwarz inequality imply
$$\int_\bbR U \left( \mu e^{ix\lambda\ad\sigma}\left( W_{+}+W_-  \right)  \right) d\lambda \in L^2_x .$$
\end{proof}
\begin{remark}
\label{smooth^2}
If we further assume that $u_0, v_0\in H^2(\bbR)$, then by Remark \ref{lam^2}  we have that $\rho_1, \rho_2\in H^{0,2}(\bbR)$. So we are allowed to take the second order derivative in $x$ and show 
$$\int_\bbR \left( i\lambda\ad\sigma \right)^2\left(\mu e^{ix\lambda\ad\sigma}\left( W_{+}+W_-  \right)  \right) d\lambda \in L^2_x $$
which will lead to $u, v \in H^2(\bbR)$.
\end{remark}
\section{Case II: Generic Condition }
Throughout this section we set $\eps=1$ and assume that $s_{11}(z)$ has finitely many simple zeros $\lbrace z_i \rbrace_{i=1}^n$ in $\bbC^+$.  From \eqref{det-s11} we have the following linear dependence relation :
\begin{equation}
\label{linear}
c_{1,i}\psi^-_1(x, z_i)+c_{2,i}\psi^-_2(x, z_i)+c_{3,i}\psi^-_3(x, z_i)=0
\end{equation}
which gives
\begin{equation}
\label{c-i}
m^-_{1}(x, z_i)=m^{++}(x, z_i) e^{2ix z_i}\mathbf{c}_i
\end{equation}
where 
$$\mathbf{c}_i=-\dfrac{ (c_{2,i}, c_{3,i})^T}{c_{i, 1}}$$
Similarly in $\bbC^-$ \cite[(2.1.36), (2.1.37)]{K14}, 
\begin{equation}
\label{c-i}
\brm^{--}(x, z_i^*)=m^{+}_1(x, z_i) e^{-2ix z_i^*}(-\mathbf{c}_i^*)^T.
\end{equation}
We now define the following residue condition for the Beals-Coifman solution $M^\pm$:

\begin{subequations}
\begin{equation}
\label{res-1}
\Res_{z=z_i} M^+=\lim_{z\to z_i}M^+(x, z)e^{2i z_i x\ad\sigma}\twomat{0}{0}{\mathbf{C}_i}{0}
\end{equation}
\begin{equation}
\label{res-2}
\Res_{z=z_i} M^-=\lim_{z\to z_i^*}M^-(x, z)e^{2i z_i x\ad\sigma}\twomat{0}{(-\mathbf{C}_i^*)^T}{0}{0}
\end{equation}
\end{subequations}
where $\mathbf{C}_i=\mathbf{c}_i/s_{11}'(z_i)$.

Since $s_{11}(\lambda)\neq 0$ for $\lambda\in \bbR$,  $\rho_1(\lambda), \rho_2(\lambda)$ is well-defined for all $\lambda\in\bbR$. Following the same proof in the previous problem, we have the following proposition:
\begin{proposition}
\label{D2}
If $u_0, v_0\in \mathcal{G}\subset H^{1,1}(\bbR)$,  then $ \lbrace (\rho_1(\lambda), \rho_2(\lambda) ),  \lbrace z_i \rbrace_{i=1}^n,   \lbrace\mathbf{C}_i \rbrace_{i=1}^n \rbrace\in H^{1,1}(\bbR) \otimes (\bbC^+)^n \otimes (\bbC \times \bbC)^n$.
\end{proposition}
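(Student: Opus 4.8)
The plan is to reduce this to the proof of Proposition \ref{D1} together with two elementary consequences of membership in the generic set $\mathcal{G}$: that $s_{11}$ is bounded away from zero on $\bbR$, and that its zeros in $\bbC^+$ are finitely many and simple. First I would note that the argument establishing Proposition \ref{D1} used only $u_0,v_0\in H^{1,1}(\bbR)$ and is insensitive to the location of the zeros of $s_{11}$; it already yields $s_{11}-1\in H^1(\bbR)$ (the mapping \eqref{D:s11}) and $s_{21},s_{31}\in H^{1,1}(\bbR)$ (the mapping \eqref{D:s21,31}). Since $s_{11}(\lambda)\neq 0$ for $\lambda\in\bbR$ in Case II, $\rho_1$ and $\rho_2$ are well defined on $\bbR$; the only thing that changes relative to Proposition \ref{D1} is that the bound $|s_{11}|>1$, used there to pass from $\{s_{21},s_{31}\}$ to $\{\rho_1,\rho_2\}$, is no longer available and must be replaced by a lower bound coming from genericity.

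Next I would unpack the generic hypothesis: for $u_0,v_0\in\mathcal{G}$ the function $s_{11}$ has no zeros on $\bbR$ and only finitely many, simple, zeros $z_1,\dots,z_n$ in $\bbC^+$. From $s_{11}-1\in H^1(\bbR)\hookrightarrow C_0(\bbR)$ the function $s_{11}$ is continuous on $\bbR$ and tends to $1$ as $|\lambda|\to\infty$; being nowhere zero it satisfies $c_0:=\inf_{\lambda\in\bbR}|s_{11}(\lambda)|>0$. Hence $1/s_{11}$ is bounded on $\bbR$, and from $1/s_{11}-1=(1-s_{11})/s_{11}$ and $(1/s_{11})'=-s_{11}'/s_{11}^2$ one gets $1/s_{11}-1\in H^1(\bbR)$. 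Since $H^1(\bbR)$ is an algebra and multiplication by a bounded $H^1(\bbR)$ function maps $H^{1,1}(\bbR)$ into itself, writing $\rho_j=s\cdot(1/s_{11}-1)+s$ with $s\in\{s_{21},s_{31}\}\subset H^{1,1}(\bbR)$ gives $\rho_1,\rho_2\in H^{1,1}(\bbR)$ — the same conclusion as in Proposition \ref{D1} with $|s_{11}|>1$ replaced by $|s_{11}|\geq c_0$.

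For the discrete data, $z_1,\dots,z_n\in\bbC^+$ holds by hypothesis, and simplicity of the zeros gives $s_{11}'(z_i)\neq 0$. The relation $\det\Psi^+(x,z_i)=s_{11}(z_i)e^{iz_i x}=0$ from \eqref{det-s11} forces the linear dependence \eqref{linear} of the analytic columns at $z_i$, with $c_{1,i}\neq 0$ (cf.\ \cite[Chapter 2]{K14}), so that $\mathbf{c}_i=-(c_{2,i},c_{3,i})^T/c_{1,i}\in\bbC\times\bbC$ and $\mathbf{C}_i=\mathbf{c}_i/s_{11}'(z_i)\in\bbC\times\bbC$ are well defined. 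Collecting the three pieces shows $\{(\rho_1,\rho_2),\{z_i\}_{i=1}^n,\{\mathbf{C}_i\}_{i=1}^n\}\in H^{1,1}(\bbR)\otimes(\bbC^+)^n\otimes(\bbC\times\bbC)^n$.

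The only genuinely new point beyond Proposition \ref{D1}, and hence what I expect to be the crux, is the uniform lower bound $|s_{11}|\geq c_0>0$ on $\bbR$ — the absence of spectral singularities — which is exactly the property encoded in the generic set $\mathcal{G}$; once it is in hand the mapping properties of $\rho_1,\rho_2$ are routine, and finiteness of the $\mathbf{C}_i$ is immediate from the simplicity of the $z_i$. The continuous, indeed analytic, dependence of the $z_i$ and $\mathbf{C}_i$ on $u_0,v_0$, which is needed for the Lipschitz continuity remark rather than for the statement itself, would follow separately from the analytic dependence of $s_{11}$ on the potential together with Rouch\'e's theorem.
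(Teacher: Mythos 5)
Your proposal is correct and follows essentially the same route as the paper, which simply observes that in Case II $s_{11}(\lambda)\neq 0$ on $\bbR$ so that $\rho_1,\rho_2$ are well defined and the proof of Proposition \ref{D1} carries over verbatim. You in fact supply more detail than the paper does — the uniform lower bound $|s_{11}|\geq c_0>0$ replacing $|s_{11}|\geq 1$, the algebra argument for $1/s_{11}-1\in H^1(\bbR)$, and the finiteness of $\mathbf{C}_i$ from simplicity of the zeros — all of which is consistent with the intended argument.
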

We now construct the Riemann-Hilbert problem for the inverse scattering part of Case II.
\begin{problem}
\label{RHP-2}
For fixed $x\in\bbR$ and  $ \lbrace (\rho_1(\lambda), \rho_2(\lambda) ),  \lbrace z_i \rbrace_{i=1}^n,   \lbrace\mathbf{C}_i \rbrace_{i=1}^n \rbrace\in H^{1,1}(\bbR) \otimes (\bbC^+)^n \otimes (\bbC \times \bbC)^n$, find a matrix $M(x,z)$ satisfying the following conditions:
\begin{enumerate}
\item[(i)](Analyticity) $M(x, z)$ is analytic for $z$ for $z\in \mathbb{C}\setminus\Lambda $  where
$$\Gamma= \mathbb{R} \cup \lbrace  \gamma_1, ..., \gamma_n, \gamma_1^*, ..., \gamma_n^* \rbrace  $$
\item[(ii)] (Normalization) 
$M(x,z)=I+\mathcal{O}(z^{-1})$ as $z \rightarrow\infty$,

\item[(iii)] (Jump condition) For each $\lambda\in\Gamma $, $M(x, z)$ has continuous 
boundary values
$M_{\pm}(\lambda)$ as $z \rarr \Gamma$. 
Moreover, the jump relation 
$$M^+(x,\lambda)=M^-(x,\lambda)e^{i\lambda x\ad\sigma}V(\lambda)$$ 
holds, where for $\lambda\in\mathbb{R}$
$$
V(\lambda)=V(\lambda)=\threemat{1+ |\rho_1(\lambda)|^2+|\rho_2(\lambda)|^2}{ \rho^*_1(\lambda)}{\rho^*_2(\lambda)}{\rho_1(\lambda)}{1}{0}{\rho_2(\lambda)}{0}{1}
$$
 and for $\lam \in \gamma_i\cup\gamma_i^* $ 
\begin{equation}
\label{jump-res}
V(\lambda) = 	\begin{cases}
						\twomat{1}{0}{\dfrac{ \bfC_i }{\lambda-z_i}}{1}	&	\lambda \in \gamma_i, \\
						\\
						\twomat{1}{\dfrac{ (\mathbf{C_i}^*)^T  }{\lambda-z_i^*} }{0}{1}
							&	\lambda \in \gamma_i^*
					\end{cases}
\end{equation}

\end{enumerate}
\end{problem}
\begin{figure}[h]
\caption{The Contour $\Gamma$}
\begin{center}
\label{figure-1}
\begin{tikzpicture}
\draw [red, fill=red] (0,1.5) circle [radius=0.05];
\draw[->,>=stealth] (0.3,1.5) arc(360:0:0.3);
\draw [red, fill=red] (-1,3) circle [radius=0.05];
\draw[->,>=stealth] (-0.7,3) arc(360:0:0.3);

\draw [red, fill=red] (1,3) circle [radius=0.05];
\draw[->,>=stealth] (1.5,3) arc(360:0:0.5);
\node [below] at (1.4, 2.6) { $\gamma_i$};
\node[above] at(1.3,3.0){$-$};
\node[above] at(1.5,3.2){$+$};
\draw  (0,0) circle [radius=0.1];
\node [above] at (1,0.3) {$\mathbb{C^+}$};
\draw [->][thick] (-5,0) -- (5,0);
\node [below] at (1,-0.3) {$\mathbb{C^-}$};
\draw [blue, fill=blue] (0,-1.5) circle [radius=0.05];
\draw[->,>=stealth] (0.3,-1.5) arc(0:360:0.3);

\node [below] at (4, 0.5) { +};
\node [above] at (4, -0.5) {$-$};

\draw [blue, fill=blue] (1,-3) circle [radius=0.05];
\draw[->,>=stealth] (1.5,-3) arc(0:360:0.5);
\node [above] at (1.4, -2.7) { $\gamma_i^*$};
\node[below] at(1.3,-2.95){$+$};
\node[below] at(1.5,-3.15){$-$};

\draw [blue, fill=blue] (-1,-3) circle [radius=0.05];
\draw[->,>=stealth] (-0.7,-3) arc(0:360:0.3);

\end{tikzpicture}
\end{center}
\end{figure}
\begin{remark}
 In Problem \ref{RHP-2} for each pole of $M$,  $z_i \in \mathbb{C}^+$, let $\gamma_i$ be a circle centered at $z_i$ of sufficiently small radius to be lie in the open upper half-plane and to be disjoint from all other circles. By doing so we replace the residue condition of the Riemann-Hilbert problem with Schwarz invariant jump conditions across a set of complete contours. The purpose of this replacement is to make use of 
\begin{enumerate}
\item Vanishing lemma of homogeneous RHPs from \cite[Theorem 9.3]{Zhou89}.
\item The Plemelj formula over complete contours.
\end{enumerate}
\end{remark}
\begin{proposition}
The Riemann-Hilbert problem \ref{RHP-2} has a unique solution.
\end{proposition}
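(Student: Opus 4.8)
The plan is to follow, and make rigorous, the scheme of \cite[Appendix A]{K14}: convert Problem \ref{RHP-2} into a Beals--Coifman singular integral equation on the complete contour $\Gamma$, show that the associated operator is Fredholm of index zero, and then eliminate its kernel with the vanishing lemma; existence and uniqueness of $M(x,\cdot)$ then follow from the Plemelj formula over complete contours, the potential being recovered as before by \eqref{reconstruction}. For the reduction I would note that on $\bbR$ the $x$-modified jump $e^{i\lambda x\ad\sigma}V(\lambda)$ factors as $(I-W_{x-})^{-1}(I+W_{x+})$, with $W_{x\pm}=e^{i\lambda x\ad\sigma}W_\pm$ and $W_\pm$ as in \eqref{W}, while on each circle $\gamma_i$ (resp.\ $\gamma_i^*$) the unipotent lower (resp.\ upper) triangular matrix of \eqref{jump-res} contributes directly to $W_{x+}$ (resp.\ $W_{x-}$). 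Writing $\calC_W f=C^+_\Gamma(fW_{x-})+C^-_\Gamma(fW_{x+})$ for the Cauchy projections $C^\pm_\Gamma$ on $\Gamma$, Problem \ref{RHP-2} is equivalent to the solvability of $(I-\calC_W)\mu=I$ in $L^2(\Gamma)$, after which
\[
M(x,z)=I+\frac{1}{2\pi i}\int_\Gamma\frac{\mu(x,\lambda)\bigl(W_{x+}+W_{x-}\bigr)(\lambda)}{\lambda-z}\,d\lambda .
\]
Since the entries of $W_\pm$ are $\rho_1,\rho_2$ and their conjugates on $\bbR$ (hence in $H^{1,1}(\bbR)\hookrightarrow L^2\cap L^\infty$) and the rational functions $\bfC_i/(\lambda-z_i)$ on the circles (bounded, because the radii are chosen so that $z_i\notin\gamma_i$, and the circles are pairwise disjoint and disjoint from $\bbR$, so $\Gamma$ has no self-intersections), \cite[Propositions 4.1 and 4.2]{Zhou89} apply exactly as for Problem \ref{RHP-1} and show that $I-\calC_W$ is Fredholm of index zero on $L^2(\Gamma)$.

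To show $\ker(I-\calC_W)=\{0\}$ I would invoke the vanishing lemma \cite[Theorem 9.3]{Zhou89}, whose hypotheses I must check for $v(\lambda):=e^{i\lambda x\ad\sigma}V(\lambda)$ on $\Gamma$. The contour $\Gamma$ is invariant under $\lambda\mapsto\lambda^*$ with orientation reversed (Figure \ref{figure-1}). On $\bbR$, since $\eps=1$ one has $V=V^\dagger$, and $e^{i\lambda x\ad\sigma}$ acts by conjugation by the unitary diagonal matrix $\mathrm{diag}(e^{-i\lambda x},e^{i\lambda x},e^{i\lambda x})$, so $v(\lambda)$ is Hermitian and positive definite: the lower-right $2\times2$ block of $V$ is the identity and the Schur complement of its $(1,1)$ entry equals $1+|\rho_1|^2+|\rho_2|^2-|\rho_1|^2-|\rho_2|^2=1>0$. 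On the circles, $v(\lambda)$ restricted to $\gamma_i^*$ and evaluated at $\lambda^*$ is the Hermitian adjoint of $v(\lambda)$ restricted to $\gamma_i$ and evaluated at $\lambda$, since $\overline{\lambda^*-z_i^*}=\lambda-z_i$ and $\overline{e^{-2i\lambda^*x}}=e^{2i\lambda x}$ carry the $(1,2)$ and $(1,3)$ entries of the former onto the $(2,1)$ and $(3,1)$ entries of the latter. Granting these, if $N(x,z)$ solves the homogeneous problem with $N(x,z)\to0$ as $z\to\infty$, then $z\mapsto N(x,z)N(x,z^*)^\dagger$ has no jump across any circle (the jumps of the two factors cancel by the circle symmetry, and since the circle jumps are unipotent $\det N$ stays single-valued there), hence is analytic and $O(z^{-2})$ on $\bbC^+$; applying Cauchy's theorem to its boundary value $N_-(\lambda)\,v(\lambda)\,N_-(\lambda)^\dagger$ on $\bbR$ together with the positivity of $v$ forces $N_\pm(\lambda)=0$ a.e.\ on $\bbR$, after which analytic continuation across $\bbR$ and through the invertible circle jumps gives $N\equiv0$. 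Thus $I-\calC_W$ is invertible and Problem \ref{RHP-2} has a unique solution.

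The step I expect to be the main obstacle is the last one: running the vanishing lemma on a contour whose components $\gamma_i,\gamma_i^*$ lie in the interior of $\bbC^\pm$ rather than on $\bbR$. Concretely, this requires checking carefully that the Schwarz symmetry $v|_{\gamma_i^*}=(v|_{\gamma_i})^\dagger$, together with the unipotent rational structure of the circle jumps, makes $z\mapsto N(x,z)N(x,z^*)^\dagger$ genuinely single-valued across each circle and produces no spurious pole at $z_i$, so that Cauchy's theorem and the identity theorem can be applied as claimed. The reduction and the Fredholm estimates are otherwise routine adaptations of the corresponding steps for Problem \ref{RHP-1} in Case I.
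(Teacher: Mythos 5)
Your proposal follows essentially the same route as the paper: reduce Problem \ref{RHP-2} to the Beals--Coifman equation $(I-\calC_W)\mu=I$ on $\Gamma$ using the trivial lower/upper triangular factorizations on $\gamma_i$ and $\gamma_i^*$, obtain Fredholm index zero from \cite[Propositions 4.1, 4.2]{Zhou89}, and kill the kernel via the positive definiteness of $V+V^\dagger$ on $\bbR$ together with the Schwarz symmetry $V(\lambda^*)=V(\lambda)^\dagger$ on the circles, invoking \cite[Theorem 9.3]{Zhou89}. The only difference is that you unpack the internals of the vanishing lemma (the $N(z)N(z^*)^\dagger$ argument), which the paper simply cites; your checks of its hypotheses are correct.
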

\begin{proof}
By standard Riemann-Hilbert theory, the existence and uniqueness of the solution to Problem \ref{RHP-2} is determined by the existence and uniqueness of the following singular integral equation:
\begin{align}
\label{SIE-2}
\mu(x,\lambda)&=I+\mathcal{C}_W \mu(x,\lambda)\\
\nonumber
                        &=I+C^+_\Gamma (\mu W_{x-})(x,\lambda)+C^-_\Gamma (\mu W_{x+})(x,\lambda)
\end{align}
In this case, for $\lambda\in \gamma_i \cup\gamma_i^*$, $V(\lambda)$ admits the following trivial factorization:
\begin{align*}
V(\lambda) &=(I-W_-)^{-1}(I+W_+)=\threemat{1}{0}{0}{0}{1}{0}{0}{0}{1}\twomat{1}{0}{\dfrac{ \bfC_i }{\lambda-z_i}}{1}, \quad \lambda\in \gamma_i \\
V(\lambda) &=(I-W_-)^{-1}(I+W_+)=\twomat{1}{\dfrac{ (\mathbf{C_i}^*)^T  }{\lambda-z_i^*} }{0}{1}\threemat{1}{0}{0}{0}{1}{0}{0}{0}{1}, \quad \lambda\in \gamma_i ^*
\end{align*}

It is shown in \cite[Proposition 4.1, 4.2]{Zhou89} that the operator $I-\mathcal{C}_W$ is Fredholm and has Fredholm index zero. It is also easy to check that $V(\lambda)+V(\lambda)^\dagger$ is a positive definite matrix  for $\lambda\in\bbR$ and
$$V(\lambda)=V(\lambda^*)^\dagger$$ 
for $\lambda\in \Gamma\setminus\bbR$. Thus $ker(I-\mathcal{C}_W)=0$ by \cite[Proposition 9.3]{Zhou89}. The operator $I-\mathcal{C}_W$ is invertible and the solution to Problem \ref{RHP-2} is given by 
\begin{equation}
\label{M-II}
M(x,z)=I+\dfrac{1}{2\pi i}\int_\Gamma \dfrac{\mu(x,\lambda)\left( W_{x+}+W_{x-} \right) }{\lambda-z}d\lambda.
\end{equation}
\end{proof}
An application of the Cauchy's theorem gives us
\begin{align}
\label{Cauchy}
\int_\Gamma \mu(x,\lambda)\left( W_{x+}+W_{x-} \right)d\lambda &=\int_\bbR \mu(x,\lambda)\left( W_{x+}+W_{x-} \right)d\lambda\\
 \nonumber  
                                                                                                        &\quad+\sum_i\int_{\gamma_i} \mu(x,\lambda)\left( W_{x+}+W_{x-} \right)d\lambda\\
\nonumber                                                                                       &\quad+\sum_i\int_{\gamma_i^*} \mu(x,\lambda)\left( W_{x+}+W_{x-} \right)d\lambda\\                         
\nonumber
&=\int_\bbR \mu(x,\lambda)\left( W_{x+}+W_{x-} \right)d\lambda\\
\nonumber
&\quad +\sum_i \left( (\mu_2(x, z_i), \mu_3(x, z_i)) e^{2i z_i x}\bfC_i , \,\mu_1(x, z_i^*)e^{-2i z_i^*}  (\mathbf{C_i}^*)^T \right)
\end{align}
Given the fact that Lemma \ref{lemma-resol} works for the contour $\Gamma$, to prove the same decay estimates given by Proposition \ref{prop-decay}, we only have to obtain the bound \eqref{cauchy-bd} on $\Gamma$. Indeed, for $\lambda\in \bbR$
\begin{align*}
C^+_{W_{x-}} I&=\lim_{\eps\to 0+}\left( \dfrac{1}{2\pi i}\int_{\bbR} \dfrac{e^{-2i x s}W_-(s)}{s-(\lambda+i\eps)}ds+\dfrac{1}{2\pi i}\int_{\gamma_i}  \dfrac{e^{-2i x s} (\mathbf{C_i}^*)^T }{ (s-(\lambda+i\eps) ) (s-z_i^*)}ds \right)\\
                       &=C^+_\bbR W_-(\lambda)+ \dfrac{e^{-2i x z_i^*} (\mathbf{C_i}^*)^T }{ z_i^*-\lambda}
\end{align*}
and for $\lambda\in \gamma_i^*$
\begin{align*}
C^+_{W_{x-}} I&=\lim_{\lambda'\to\lambda}\left( \dfrac{1}{2\pi i}\int_{\bbR} \dfrac{e^{-2i x s}W_-(s)}{s-\lambda'}ds+\dfrac{1}{2\pi i}\int_{\gamma_i}  \dfrac{e^{-2i x s} (\mathbf{C_i}^*)^T }{ (s-\lambda' ) (s-z_i^*)}ds \right)\\
                       &= \dfrac{1}{2\pi i}\int_{\bbR} \dfrac{e^{-2i x s}W_-(s)}{s-\lambda}ds+ \dfrac{e^{-2i x z_i^*} (\mathbf{C_i}^*)^T }{ z_i^*-\lambda}+\dfrac{e^{-2i x \lambda} (\mathbf{C_i}^*)^T }{ \lambda-z_i^*}
 \end{align*}
Now we conclude that the residue condition only adds  some exponentially decaying components to the operator $\calC_W$ for $x\geq 0$. So we are allowed to repeat the proofs of Proposition \ref{prop-decay} and Proposition \ref{smooth} to obtain the following:
\begin{proposition}
\label{smooth-1}
If $ \lbrace (\rho_1(\lambda), \rho_2(\lambda) ),  \lbrace z_i \rbrace_{i=1}^n,   \lbrace\mathbf{C}_i \rbrace_{i=1}^n \rbrace\in H^{1,1}(\bbR) \otimes (\bbC^+)^n \otimes (\bbC \times \bbC)^n$, then $u(x), v(x)\in H^{1,1}(\bbR)$.
\end{proposition}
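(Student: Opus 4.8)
The plan is to reduce the proposition to the two estimates already established in Case I — the spatial decay estimate of Proposition \ref{prop-decay} and the smoothness estimate of Proposition \ref{smooth} — by showing that the finitely many discrete circles $\gamma_i,\gamma_i^*$ contribute only harmless, exponentially decaying terms for $x\geq 0$. Combining the reconstruction formula \eqref{reconstruction}, the solution formula \eqref{M-II} and the contour‑deformation identity \eqref{Cauchy}, the potential $[u(x),v(x)]$ is, up to the factor $-2i$, the $\lambda$‑integral over $\bbR$ of $\mu\,e^{ix\lambda\ad\sigma}(W_{x+}+W_{x-})$ — precisely the object analyzed in Propositions \ref{prop-decay} and \ref{smooth} — plus the finite sum
$$\sum_i\Bigl((\mu_2(x,z_i),\mu_3(x,z_i))\,e^{2iz_ix}\mathbf{C}_i,\; \mu_1(x,z_i^*)\,e^{-2iz_i^*x}(\mathbf{C}_i^*)^T\Bigr).$$
Since $\imag z_i>0$ we have $|e^{2iz_ix}|=|e^{-2iz_i^*x}|=e^{-2(\imag z_i)x}$, so for $x\geq 0$ each summand is exponentially small once $\mu(x,z_i)$ and $\mu(x,z_i^*)$ are bounded; boundedness follows from Lemma \ref{lemma-resol}, valid for the augmented contour $\Gamma$, together with the fact that $z_i$ lies in the interior of $\gamma_i$ so that evaluating the Cauchy integral \eqref{M-II} there is a bounded operation. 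Hence this sum and all its $x$‑derivatives lie in every weighted $L^2(\bbR_{\geq 0})$, and it is irrelevant to membership in $H^{1,1}$.

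For the real‑line integral I would first confirm that Lemmas \ref{lemma-resol} and \ref{lemma-decay} transfer to $\Gamma$. The resolvent bound is stated to hold on $\Gamma$. For the decay bound \eqref{cauchy-bd}, the explicit computation preceding the proposition shows that $C^+_{W_{x-}}I$ (and likewise the other Cauchy operators) equals its $\bbR$‑counterpart plus terms of the form $e^{-2ixz_i^*}(\mathbf{C}_i^*)^T/(z_i^*-\lambda)$, which for $x\geq 0$ are exponentially small and smooth in $\lambda$ away from $z_i^*$; thus $\norm{C^\pm_{W_{x\mp}}}{L^2(\Gamma)}$ still obeys the $(1+x^2)^{-1/2}$ bound. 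With these in hand, the argument of Proposition \ref{prop-decay} gives $xu,xv\in L^2(\bbR_{\geq0})$, and for the smoothness I would invoke the differential identity \eqref{mu-diff}, which remains valid for the $\Gamma$‑problem (the circle terms differentiate to bounded multiples — factors $iz_i\ad\sigma$ and derivatives of the exponentials — of still exponentially decaying functions), and then repeat Proposition \ref{smooth}: $\int_\bbR i\lambda\ad\sigma\bigl(\mu\,e^{ix\lambda\ad\sigma}(W_++W_-)\bigr)\,d\lambda\in L^2_x$ because $\rho_1,\rho_2\in H^{1,1}$, while $\int_\bbR U\bigl(\mu\,e^{ix\lambda\ad\sigma}(W_++W_-)\bigr)\,d\lambda\in L^2_x$ by the decay just obtained together with Cauchy–Schwarz.

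Finally, for $x\leq 0$ I would pass to the modified Riemann–Hilbert problem \eqref{BC-jump-til} with jump matrix $\tilde V$ and the corresponding residue data; its triangular factorization is opposite to that of $V$, so the discrete exponentials now decay for $x\leq 0$, and the same reasoning yields $xu,xv\in L^2(\bbR_{\leq0})$ and $u',v'\in L^2(\bbR_{\leq0})$. Because $\lim_{z\to\infty}A(z)=I$, the two problems reconstruct the same potential via \eqref{m-tilde} and \eqref{reconstruction}, so gluing the two half‑line estimates gives $u,v\in H^{1,1}(\bbR)$. The only real obstacle is the bookkeeping needed to verify that the uniform‑in‑$x$ resolvent bound, the decay lemma, and the identity \eqref{mu-diff} all survive the addition of the (self‑intersection‑free) discrete contour; once the factors $e^{2iz_ix}$ are isolated, every discrete contribution is controlled with no further work.
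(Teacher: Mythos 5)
Your proposal is correct and follows essentially the same route as the paper: deform the reconstruction integral onto $\bbR$ plus the circle contributions via \eqref{Cauchy}, observe that the latter (and the extra terms in the Cauchy projections on $\Gamma$) are exponentially decaying for $x\geq 0$ so that Lemmas \ref{lemma-resol} and \ref{lemma-decay} transfer, and then repeat the arguments of Propositions \ref{prop-decay} and \ref{smooth}, handling $x\leq 0$ by the conjugated problem with opposite triangularity. No substantive differences from the paper's proof.
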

\section{Arbitrary Spectral Singularities}
In this section we allow $s_{11}(z)$ to have (possibly infinitely many) zeros  of arbitrary order in $\bbC^+\cup \bbR$. This leads to two consequences:
\begin{enumerate}
\item The reflection coefficients $\rho_1, \rho_2$ are not defined on $\bbR$.
\item The Beals-Coifman solution $M(x,z)$ may not have continuous limit as $z$ approaches $\bbR$ from $\bbC^\pm$.
\end{enumerate}
In a series of papers, \cite{Zhou89-2,Zhou95,Zhou98},  Zhou developed new tools to construct  direct and inverse scattering maps to overcome the obstacles above. The key idea is to make use of the following two facts of $s_{11}$
\begin{enumerate}
\item For $\norm{U}{L^1}\ll 1$, $s_{11}$ is non-zero in $\bbC^+\cup \bbR$. This follows from the analytic Fredholm theory.
\item $\lim_{\lambda\to \infty} s_{11}(\lambda)=1$. This follows from \eqref{S-int} and the Riemann-Lebesgue lemma.
\end{enumerate}
Let $\Sigma_\infty$ be a circle centered at $0$ such that $s_{11}$ does not have any zero outside the circle. We arrive at the following picture:
\begin{figure}[H]
\caption{The Augmented Contour $\Sigma$}
\label{figure-2}
\begin{tikzpicture}[scale=0.9]
\draw  (0,0) circle [radius=0.1];
\draw [blue, fill=blue] (3,0) circle [radius=0.1];
\draw  [blue, fill=blue] (-3,0) circle [radius=0.1];
\draw[->,>=stealth] (-3,0) arc(180:90:3);
\draw[->,>=stealth] (-3,0) arc(180:270:3);
\draw(0,3) arc(90:0:3);
\draw(0,-3) arc(270:360:3);
\draw (0,0) circle [radius=3];
\draw [->][thick] (-5,0) -- (-4,0);
\draw [-][thick] (-4,0) -- (-3,0);
\draw [->][thick] (3,0) -- (4,0);
\draw[-][thick] (4,0) -- (5,0);
\draw [<-][thick] (-2,0) -- (3,0);
\draw[-][thick] (-2,0) -- (-3,0);
\node [below] at (1,-0.3) {$+$};
\node [above] at (1,0.3) {$-$};
\node [below] at (4,-0.3) {$-$};
\node [above] at (4,0.3) {$+$};
\node [below] at (-4,-0.3) {$-$};
\node [above] at (-4,0.3) {$+$};
\node [below] at (0,-0.5) {$\Omega_4$};
\node [above] at (0,0.5) {$\Omega_3$};
\node [below] at (4,-1) {$\Omega_2$};
\node [above] at (4, 1) {$\Omega_1$};
\node [above] at (-2.8, 2) {$\Sigma_\infty$};
\node [right] at (5,0) {$\mathbb{R}$};
\node[below] at (2.6,0){$S_\infty$};
\node[below] at (-2.4,0){$-S_{\infty}$};
\node [above] at (2.8, 2) {$\Sigma_\infty^+$};
\node [below] at (2.8, -2) {$\Sigma_\infty^-$};
\end{tikzpicture}
\end{figure}
\subsection{ Construction of Scattering Data }
Let $x_0\in \bbR$ be such that the cut-off potential   $U_{x_0}=U\chi_{ (x_0, \infty )  }$ satisfies $\norm{U_{x_0}}{L^1}\ll 1$. The strategy is to construct a new analytic function which is also normalized at $x\to +\infty$ to replace $M(x,z)$ inside the circle $\Sigma_\infty$ and formulate a new Riemann-Hilbert problem along the augmented contour $\Sigma=\bbR\cup\Sigma_\infty$. We first note that we can still have $M(x,z)$ and $\rho_1, \rho_2$ outside the circle $\Sigma_\infty$ as given by \eqref{BC+}-\eqref{BC-} and \eqref{rho1,2}. We then construct a Beals-Coifman solution $M^{(0)}$ normalized as $x\to \infty$ associated to the  potential $U_{x_0}$. Note that this $M^{(0)}$ is constructed in parallel with the construction of $M(x,z)$.  Associated to $M^{(0)}$ are the scattering matrix $\mathbf{S}(\lambda)$ $(\mathbf{T}=\mathbf{S}^{-1})$ and reflection coefficients $r_1(\lambda), r_2(\lambda)$. More specifically,
\begin{equation}
\label{S-0}
\mathbf{S}(\lambda)=I+\int_{x_0}^\infty e^{-i\lambda y\ad\sigma}U(y)m^-(y,\lambda)dy
\end{equation}
and 
\begin{equation}
\label{r-1,2}
r_1(\lambda):=\dfrac{\bfs_{21}(\lambda)}{\bfs_{11}(\lambda)},\quad r_2(\lambda):=\dfrac{\bfs_{31}(\lambda)}{\bfs_{11}(\lambda)}
\end{equation}
We then define $M^{(1)}$ to be the solution to the following Volterra integral equation
\begin{equation}
\label{M1}
 M^{(1)}(x,z) = I + \int_{x_0}^x e^{i(x-y)z \ad\sigma}   U(y)M^{(1)} (y, z)dy
\end{equation}
and  
\begin{equation}
\label{M2}
 M^{(2)}(x,z) =M^{(1)}(x,z)e^{i(x-x_0)z\ad\sigma } M^{(0)}(x_0, z)
\end{equation}
From $M$ and $M_2$, we construct a new matrix-valued function $\bfM$ that is piecewise analytic in $\bbC$ with jump along the augmented contour in the following way.
Let $$\bfM(x,z)=M(x,z)$$
 on
$\Omega_1\cup \Omega_2$ and 
$$\bfM(x,z)=M^{(2 )}(x,z)$$
on $\Omega_3\cup \Omega_4$. 

We can explicitly compute the jump matrices across the various parts of the contour $\Sigma$. Along $(-\infty, -S_\infty )\cup (S_\infty, \infty)$, the jump matrix is given by:
 \begin{equation}
 \label{V-R-1}
V(\lambda)=\threemat{1+ |\rho_1(\lambda)|^2+|\rho_2(\lambda)|^2}{ \rho^*_1(\lambda)}{ \rho^*_2(\lambda)}{\rho_1(\lambda)}{1}{0}{\rho_2(\lambda)}{0}{1}.
\end{equation}
Along $(-S_\infty, S_\infty)$, the jump matrix is given by:
\begin{equation}
 \label{V-R-2}
V(\lambda)=\threemat{1}{- r^*_1(\lambda)}{- r^*_2(\lambda)}{-r_1(\lambda)}{1+ |r_1(\lambda)|^2}{r_1(\lambda) r_2^*(\lambda)}{-r_2(\lambda)}{r_1^*(\lambda) r_2(\lambda)}{1+ |r_2(\lambda)|^2}
\end{equation}
Along the circle $\Sigma_\infty$, we define
$$V(\lambda)=e^{-ix\lambda\ad\sigma} \left( M^{(2)}(x,\lambda)^{-1}M(x,\lambda) \right).$$
If we set $x=x_0$, we get $V(\lambda)$ explicitly in terms of Jost functions:
 \begin{enumerate}
 \item[1.] Across the arc in $\bbC^+$:
 \begin{align}
\label{v-circle+}
e^{ix_0\lambda\ad\sigma} V(\lambda) &= M^{(2)}(x_0,\lambda)^{-1}M(x_0,\lambda) \\
\nonumber
                                                         &= M^{(0)}(x_0,\lambda)^{-1}M(x_0,\lambda) \\
\nonumber
                                                      &=\threemat{1}{0}{0} {v_{21}^+(\lambda)} {1}{0}{v_{31}^+(\lambda)}{0}{1}
\end{align}
where 
\begin{subequations}
\begin{equation}
\label{v21+}
v_{21}^+=\dfrac{  m_{21}^-(x_0)  m_{33}^+(x_0)-m_{23}^+(x_0)m_{31}^-(x_0)        }{\bfs_{11}s_{11}}
\end{equation}
\begin{equation}
\label{v31+}
 v_{31}^+=\dfrac{  m_{31}^-(x_0)  m_{22}^+(x_0)-m_{32}^+(x_0)m_{21}^-(x_0)        }{\bfs_{11}s_{11}}
 \end{equation}
\end{subequations}
\item[2.] Across the arc in $\bbC^-$:
\begin{align}
\label{v-circle-}
e^{ix_0\lambda\ad\sigma} V(\lambda) &= M(x_0,\lambda)^{-1}M^{(2)}(x_0,\lambda) \\
\nonumber
                                                          &= M(x_0,\lambda)^{-1}M^{(0)}(x_0,\lambda) \\
\nonumber
                                                      &=\threemat{1}{v_{12}^-(\lambda)}{v_{13}^-(\lambda)} {0 } {1}{0}{0}{0}{1}
\end{align}
\begin{subequations}
\begin{equation}
\label{v12-}
v_{12}^-=\dfrac{  m_{21}^-(x_0)^*  m_{33}^+(x_0)^*-m_{23}^+(x_0)^*m_{31}^-(x_0)^*        }{\mathbf{t}_{11}t_{11}}
\end{equation}
\begin{equation}
\label{v13-}
 v_{13}^-=\dfrac{  m_{31}^-(x_0)^*  m_{22}^+(x_0)^*-m_{32}^+(x_0)m_{21}^-(x_0)^*        }{\mathbf{t}_{11}t_{11}}
\end{equation}
\end{subequations}
 \end{enumerate}
 
\begin{remark}
In the calculation of \eqref{v-circle+}, since we construct $M^{(0)}$ in parallel with the construction of $M$, we can write
$$M^{(0)}(x_0, \lambda)=\threemat{1/\bfs_{11}}{ m^+_{12}(x_0, \lambda)}{m^+_{13}(x_0, \lambda)} {0}{m^+_{22}(x_0, \lambda)}{m^+_{23}(x_0, \lambda)} {0}{m^+_{32}(x_0, \lambda)}{m^+_{33}(x_0, \lambda)}.$$
To arrive at the expression above, we made use of the following two facts:
\begin{enumerate}
\item 
\begin{align*}
m^-_0(x_0,\lambda)&=I+\int_{-\infty}^{x_0} e^{i\lambda (x_0-y)\ad\sigma}U_{x_0}(y)m^-_0(y,\lambda)dy\\
                            &=I
\end{align*}
\item
\begin{align*}
m^+(x_0,\lambda) &=I+\int_{+\infty}^{x_0} e^{i\lambda (x_0-y)\ad\sigma}U(y)m^+(y,\lambda)dy\\
                              &=I+\int_{+\infty}^{x_0} e^{i\lambda (x_0-y)\ad\sigma}U_{x_0}(y)m^+(y,\lambda)dy\\
                              &=m^+_0(x_0,\lambda)
\end{align*}
\end{enumerate}
The first fact is trivial while the second fact follows from the uniqueness theory of ODE. The calculation of \eqref{v-circle-} is similar.
\end{remark} 
 \begin{remark}
 The explicit expressions of \eqref{v-circle+} and \eqref{v-circle-} enable us to precisely check if the jump matrix $V(\lambda)$ satisfies the matching condition (see Definition \ref{Hk} below) at the self intersection points of the contour, namely $(\pm S_\infty, 0)$. This condition is necessary for showing that $V(\lambda)$ factorizes into a pair of decomposing algebra $H^{1}(\Sigma_\pm)$ along the contour $\Sigma$.  An important fact is that the Cauchy projections $C^\pm$ are bounded on $H^{1}(\Sigma_\pm)$ (see \cite[Proposition 2.1]{Zhou89}) and this will enable us to show various mapping properties of the inverse scattering map.
 \end{remark}
First of all, it is easy to verify the following proposition:
  \begin{proposition}
 \label{schwarz invariance}
The jump matrix $V$  on $\Sigma$ defined in \eqref{V-R-1}-\eqref{v-circle+} and \eqref{v-circle-} satisfies:
\begin{enumerate}
\item[(i)] $V(\lambda)+V(\lambda)^\dagger$ is positive definite for $\lambda\in\bbR$.
\smallskip
\item[(ii)] $V({\lambda^*})=V(\lambda)^\dagger$ for $\lambda\in \Sigma \setminus\bbR$.
\end{enumerate}
\end{proposition}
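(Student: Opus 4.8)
The plan is to read off both properties directly from the explicit form of the jump matrix $V$ on $\Sigma=\bbR\cup\Sigma_\infty$. For (i), note that $\Sigma\cap\bbR$ splits into $\{|\lambda|>S_\infty\}$, where $V$ is given by \eqref{V-R-1}, and $\{|\lambda|<S_\infty\}$, where $V$ is given by \eqref{V-R-2}; on each of them $V$ factors as $V(\lambda)=P(\lambda)^\dagger P(\lambda)$ with $P$ a unipotent triangular matrix,
\begin{align*}
P&=\threemat{1}{0}{0}{\rho_1}{1}{0}{\rho_2}{0}{1}\qquad\text{on }\{|\lambda|>S_\infty\},\\
P&=\threemat{1}{-r_1^*}{-r_2^*}{0}{1}{0}{0}{0}{1}\qquad\text{on }\{|\lambda|<S_\infty\},
\end{align*}
as one checks by a one-line matrix multiplication. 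Since $\det P=1$, $P$ is invertible, so $V=P^\dagger P$ is Hermitian and positive definite, whence $V+V^\dagger=2V$ is positive definite for every $\lambda\in\bbR\setminus\{\pm S_\infty\}$.

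For (ii), $\Sigma\setminus\bbR$ is the union of the two open arcs $\Sigma_\infty^+$ and $\Sigma_\infty^-$. Fix $\lambda\in\Sigma_\infty^+$, so $\lambda^*\in\Sigma_\infty^-$; the opposite case follows by taking adjoints. Since $M^{(1)}(x_0,\cdot)=I$, \eqref{M2} gives $M^{(2)}(x_0,\cdot)=M^{(0)}(x_0,\cdot)$, so \eqref{v-circle+} reads $V(\lambda)=e^{-ix_0\lambda\ad\sigma}\bigl(M^{(0)}(x_0,\lambda)^{-1}M(x_0,\lambda)\bigr)$, while \eqref{v-circle-} applied at $\lambda^*$ reads $V(\lambda^*)=e^{-ix_0\lambda^*\ad\sigma}\bigl(M(x_0,\lambda^*)^{-1}M^{(0)}(x_0,\lambda^*)\bigr)$. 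Because $\sigma$ is real and diagonal, $\bigl(e^{iwx_0\ad\sigma}B\bigr)^\dagger=e^{iw^*x_0\ad\sigma}(B^\dagger)$ for every scalar $w$, hence
\[
V(\lambda)^\dagger=e^{-ix_0\lambda^*\ad\sigma}\Bigl(M(x_0,\lambda)^\dagger\bigl(M^{(0)}(x_0,\lambda)^\dagger\bigr)^{-1}\Bigr).
\]
Comparing this with the expression for $V(\lambda^*)$, assertion (ii) reduces to the identity $M^{(0)}(x_0,\lambda^*)M^{(0)}(x_0,\lambda)^\dagger=M(x_0,\lambda^*)M(x_0,\lambda)^\dagger$ for $\lambda\in\Sigma_\infty^+$. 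Both sides equal $I$: the symmetry reduction \eqref{symm-psi} (with $\eps=1$, so $J_\eps=I$), carried over to the Beals-Coifman solutions via \eqref{brpsi-2}--\eqref{brpsi-3}, the determinant relations \eqref{det-s11}--\eqref{det-t11}, and the scattering symmetries \eqref{s-t-1}--\eqref{s-t-3}, gives $M^-(x,\lambda^*)^\dagger M^+(x,\lambda)=I$ (equivalently $M^-(x,\lambda^*)M^+(x,\lambda)^\dagger=I$) for all $x$ and all $\lambda\in\bbC^+$, and likewise for $M^{(0)}$; since $\bfs_{11}$ has no zeros in $\bbC^+\cup\bbR$ and $\Sigma_\infty$ encloses all zeros of $s_{11}$, both $M$ and $M^{(0)}$ are analytic and invertible near $\Sigma_\infty$, so $M(x_0,\lambda)=M^+(x_0,\lambda)$, $M(x_0,\lambda^*)=M^-(x_0,\lambda^*)$, and both products collapse to $I$. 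This yields $V(\lambda^*)=V(\lambda)^\dagger$ on $\Sigma_\infty^+$, and on $\Sigma_\infty^-$ upon conjugating.

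All of this is routine; the only step needing care is the last part of (ii) --- transporting \eqref{symm-psi} to the Beals-Coifman solutions $M^{\pm}$ while keeping track of which column is analytic in which half-plane, and checking that the $s_{11}$- and $t_{11}$-quotients entering $M$, $M^{(0)}$ are evaluated where they are holomorphic and nonvanishing (which they are near $\Sigma_\infty$ by construction). Alternatively, (ii) can be verified by hand from the triangular forms \eqref{v-circle+}--\eqref{v13-}, by checking $v_{12}^-(\lambda^*)=\overline{v_{21}^+(\lambda)}$ and $v_{13}^-(\lambda^*)=\overline{v_{31}^+(\lambda)}$ through a comparison of \eqref{v21+} with \eqref{v12-} and of \eqref{v31+} with \eqref{v13-}, again using the $s$--$t$ symmetries and Schwarz reflection.
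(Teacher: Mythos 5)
Your proposal is correct. The paper itself offers no proof of this proposition (it is introduced with ``it is easy to verify''), so there is no written argument to compare against; your verification is the natural one and checks out. For (i), the factorizations $V=P^\dagger P$ with unipotent $P$ do reproduce \eqref{V-R-1} and \eqref{V-R-2} exactly (note these formulas already have $\eps=1$ built in, consistent with the setting of this section), and Hermiticity plus invertibility of $P$ gives positive definiteness of $V+V^\dagger=2V$. For (ii), your reduction to $M^{(0)}(x_0,\lambda^*)M^{(0)}(x_0,\lambda)^\dagger=M(x_0,\lambda^*)M(x_0,\lambda)^\dagger=I$ is sound: the conjugation identity $\bigl(e^{iwx_0\ad\sigma}B\bigr)^\dagger=e^{iw^*x_0\ad\sigma}(B^\dagger)$ is correct since $\sigma$ is real diagonal, and the Schwarz symmetry $M^-(x,\lambda^*)=\bigl(M^+(x,\lambda)^\dagger\bigr)^{-1}$ (for $\eps=1$, where $J_\eps=I$) is exactly what \eqref{symm-psi} together with the cross-product construction \eqref{brpsi-2}--\eqref{brpsi-3} and $\det M^\pm=1$ encodes; you rightly flag this as the one step needing care, since it uses that $\bfs_{11}$ is zero-free in $\overline{\bbC^+}$ and $s_{11}$ is zero-free on and outside $\Sigma_\infty$. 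Your fallback route --- checking $v_{12}^-(\lambda^*)=\overline{v_{21}^+(\lambda)}$ and $v_{13}^-(\lambda^*)=\overline{v_{31}^+(\lambda)}$ directly from \eqref{v21+}--\eqref{v13-} using $s_{11}(\lambda)=t_{11}^*(\lambda^*)$ and its analogue for $\bfs_{11}$ --- is equally valid and in fact exposes a typo in the paper's \eqref{v13-}, where $m_{32}^+(x_0)$ should carry a conjugation.
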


To give a  full characterization of  the scattering matrix we need some definitions. 
Following \cite{Zhou89} and  \cite[Chapter 2]{TO16} ,  we first recall the definition of Sobolev spaces $H^k$ along a contour $\partial \Omega$.
\begin{definition}
\label{Hk}
 Let $\Omega$ be an open connected region with piecewise smooth boundary $\partial \Omega$. Denote by $S_\Omega$ the set of non-smooth points on $\partial \Omega$. The space $H^k(\partial \Omega)$ $k\geq 0$ consists of functions on $\partial\Omega$ which satisfies:
 \begin{enumerate}
 \item[1.] The distributional derivative $f^{ ( j )  }\in L^2$ for $0\leq j\leq k$ on each curve segment of $\partial\Omega\setminus S_\Omega$.
 \item[2.] At each point $z'\in S_\Omega$, $f^{(j)}$, $0\leq j\leq k-1$ matches from two sides.
 \end{enumerate}
\end{definition} 
\begin{remark}
In our case, as is shown by Figure \ref{figure-2}, the contour $\Sigma$ consists of several parts. $(-S_\infty, 0)$ and $(S_\infty, 0)$ are the non-smooth points for $\partial\Omega_1$,  $\partial\Omega_2$,  $\partial\Omega_3$ and  $\partial\Omega_4 $.  They are also the self-intersection points of the contour $\Sigma$. A set of complete contours can be viewed simultaneously as the positive boundary of the positive region and the negative boundary of the negative region. Using $\Sigma$ in Figure \ref{figure-2} as an example, we have
\begin{align}
\label{Sigma_+}
\Sigma_+ &= \left[(-\infty, -S_\infty)\cup\Sigma^+_\infty\cup(S_\infty, \infty) \right]\cup \left[(-S_\infty, S_\infty)\cup\Sigma^-_\infty \right] \\
\nonumber
                &=\partial \Omega_1 \cup\partial \Omega_4
\end{align}
\begin{align}
\label{Sigma_-}
\Sigma_- &= \left[(-\infty, -S_\infty)\cup\Sigma^-_\infty\cup(S_\infty, \infty) \right]\cup \left[(-S_\infty, S_\infty)\cup\Sigma^+_\infty \right]\\
\nonumber
               &=\partial \Omega_2 \cup\partial \Omega_3
\end{align}
\end{remark}
 The scattering data is characterized in the following proposition:
\begin{proposition}
\label{scattering data}

The jump matrix $V$  on $\Sigma$ defined in \eqref{V-R-1}, \eqref{v-circle+} and \eqref{v-circle-} admits a triangular factorization $V(\lambda)=V_-^{-1}(\lambda)V_+(\lambda)$ where 
\begin{enumerate}
\item[1.]
$V_{\pm}(\lambda)-I\in H^{1,1}(\Sigma_\pm) $;
\item[2.] $V_+\restriction_{\partial\Omega_1}-I$ and $V_-\restriction_{\partial\Omega_3}-I$ are strictly lower triangular while $V_-\restriction_{\partial\Omega_2}-I$ and $V_+\restriction_{\partial\Omega_4}-I$ are strictly upper triangular.
\end{enumerate}
\end{proposition}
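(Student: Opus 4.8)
The plan is to build the two factors $V_{\pm}$ separately over the three kinds of component of $\Sigma$ — the unbounded rays $(-\infty,-S_\infty)\cup(S_\infty,\infty)$, the bounded segment $(-S_\infty,S_\infty)$, and the two arcs $\Sigma_\infty^{+},\Sigma_\infty^{-}$ — using the explicit triangular factorizations of the jump on the real components and smooth triangular extensions across the (compact) arcs, and then to check that the pieces glue at the two self-intersection points $(\pm S_\infty,0)$ in the sense of Definition~\ref{Hk}. On the arcs there is nothing to estimate: by \eqref{v-circle+}–\eqref{v-circle-} the jump $V$ restricted to $\Sigma_\infty^{+}$ (resp.\ $\Sigma_\infty^{-}$) is real-analytic and lower (resp.\ upper) triangular, and a smooth matrix function on a compact arc automatically lies in $H^{1,1}$; so the only analytic content is the $H^{1,1}$ bound on $\bbR$, and the only combinatorial content is keeping track of which factor is upper and which is lower.

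On the unbounded rays I would use the factorization \eqref{factorize}–\eqref{W} of \eqref{V-R-1}: $V=(I-W_-)^{-1}(I+W_+)$ with $W_-$ supported in the first row and $W_+$ in the first column, so $V_--I=-W_-$ is strictly upper triangular and $V_+-I=W_+$ is strictly lower triangular. This is legitimate on the closed outer region because there $s_{11}$ is zero-free and bounded away from $0$, while $s_{21},s_{31}\in H^{1,1}(\bbR)$ and $s_{11}-1\in H^{1}(\bbR)$ by the estimates in the proof of Proposition~\ref{D1} (which use only $U\in H^{1,1}$, no smallness), so $\rho_1,\rho_2$ restrict to $H^{1,1}$ functions there. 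On the bounded segment the matrix \eqref{V-R-2} factors as $V=V_-^{-1}V_+$ with
\[
V_-=\threemat{1}{0}{0}{r_1}{1}{0}{r_2}{0}{1},\qquad
V_+=\threemat{1}{-r_1^*}{-r_2^*}{0}{1}{0}{0}{0}{1},
\]
i.e.\ $V_--I$ strictly lower, $V_+-I$ strictly upper. Here $\norm{U_{x_0}}{L^1}\ll1$ makes $\bfs_{11}$ zero-free on $\bbC^+\cup\bbR$, and combining $m^-_0(x_0,\lambda)=I$, $m^+_0(x_0,\lambda)=m^+(x_0,\lambda)$ with the scattering relation for $U_{x_0}$ gives $\mathbf{S}(\lambda)=e^{-ix_0\lambda\ad\sigma}m^+(x_0,\lambda)^{-1}$; hence $r_1,r_2$ are expressed purely through $m^+(x_0,\cdot)$ and are in $H^{1}$ by the same Proposition~\ref{D1}-type bounds, so in particular in $H^{1,1}$ of the bounded segment (where the weight is harmless). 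Finally, on $\Sigma_\infty^{+}$ I would take $V_-=I+N_-$ with $N_-$ strictly lower triangular, its two nonzero entries interpolating smoothly along the arc between the values of $(r_1,r_2)$ at $\pm S_\infty$ dictated by the segment factorization, and set $V_+:=V_-V$ (again $I+(\text{strictly lower})$, since $V$ is); symmetrically on $\Sigma_\infty^{-}$ interpolate $V_+=I+N_+$ (strictly upper, from the segment values $-r_i^*$) and put $V_-:=V_+V^{-1}$. With these choices claim~(2) is read off directly.

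The crux is the gluing at $(\pm S_\infty,0)$: that the two components of $V_+$ meeting there along $\partial\Omega_1$ (and of $V_-$ along $\partial\Omega_3$, etc.) carry the same boundary value. By construction $V_-\restriction_{\Sigma_\infty^{+}}$ already agrees with $V_-\restriction_{(-S_\infty,S_\infty)}$ at $S_\infty$, so what remains is that $V_+=V_-V$ on $\Sigma_\infty^{+}$ reproduces the ray value $I+W_+$ at $S_\infty$, and likewise for $V_-=V_+V^{-1}$ on $\Sigma_\infty^{-}$. Since $V$ is the jump of the single piecewise-analytic matrix $\bfM$, the four jump matrices meeting at $S_\infty$ satisfy a cyclic (no-monodromy) relation; concretely this boils down to the identities
\[
v_{21}^{+}=e^{2ix_0\lambda}(\rho_1-r_1),\qquad v_{31}^{+}=e^{2ix_0\lambda}(\rho_2-r_2)\ \text{ on }\ \Sigma_\infty^{+},
\]
and their conjugate analogues for $v_{12}^{-},v_{13}^{-}$ on $\Sigma_\infty^{-}$, which one checks from \eqref{v21+}–\eqref{v13-}, the scattering relation \eqref{S-m}, the normalizations $m^-_0(x_0,\cdot)=I$ and $m^+_0(x_0,\cdot)=m^+(x_0,\cdot)$, and $\det M^{\pm}=\det m^{\pm}=1$ (the determinant relations yield $\bfs_{11}=[m^+(x_0,\cdot)^{-1}]_{11}$ and $\bfs_{21}=e^{-2ix_0\lambda}[m^+(x_0,\cdot)^{-1}]_{21}$, into which one substitutes $m^-(x_0,\cdot)=m^+(x_0,\cdot)e^{ix_0\lambda\ad\sigma}S$). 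Granting these, at $S_\infty$ one gets $V_-^{-1}V_+=I+(\text{strictly lower with entries }\rho_i-r_i)$ from the ray value of $V_+$ and the segment value of $V_-$, which is exactly the arc value of $V$; hence $V_+=V_-V$ returns $I+W_+$ there, and the matching holds. The computation at $-S_\infty$ and on $\Sigma_\infty^{-}$ is identical.

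The main obstacle is exactly this last step — running the two Beals–Coifman constructions (for $U$ and for the cut-off $U_{x_0}$) in parallel, tracking the exponential prefactors through \eqref{v-circle+}–\eqref{v-circle-}, and collapsing the scattering data of $U_{x_0}$ to closed forms in $m^+(x_0,\cdot)$ so that the no-monodromy identities come out exactly. The $H^{1,1}$-regularity away from $\pm S_\infty$ and the triangularity pattern of claim~(2), which is forced by the choices above, then follow routinely from the proof of Proposition~\ref{D1} and the bookkeeping in the second and third paragraphs.
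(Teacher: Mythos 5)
Your proposal is correct and follows essentially the same route as the paper: explicit triangular factorizations of \eqref{V-R-1} and \eqref{V-R-2} on the real components, verification of the corner identity $e^{-2i(\pm S_\infty)x_0}v_{21}^{+}=\rho_1(\pm S_\infty)-r_1(\pm S_\infty)$ (the paper by substituting the scattering relation \eqref{S-m} into the numerators of \eqref{v21+}--\eqref{v31+}, you equivalently via $\mathbf{S}=e^{-ix_0\lambda\ad\sigma}m^{+}(x_0,\cdot)^{-1}$ and cofactors), and an interpolation on the arcs that splits the arc jump into two like-triangular factors matching the ray and segment values at $(\pm S_\infty,0)$ — the paper interpolates the $V_+$ factor by linear polynomials $L_i$ while you interpolate $V_-$, which is the same device. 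The only caveat is that your identity for $v_{21}^{+}$ should be asserted only at the real points $\pm S_\infty$ (where $s_{21},s_{31}$, hence $\rho_i$, are defined), not on all of $\Sigma_\infty^{+}$; since you only invoke it at the corners, this does not affect the argument.
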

\begin{proof}
On $(-\infty, -S_\infty)\cup (S_\infty, \infty)$ the scattering matrix $V(\lambda)$ admits the following factorization:
\begin{align}
\label{V-1}
V(\lambda) &=V_{-}(\lambda)^{-1}V_{+}(\lambda)\\
\nonumber
                    &=\threemat{1}{\rho^*_1(\lambda) }{ \rho^*_2(\lambda) } {0}{1}{0} {0}{0}{1}\threemat{1}{0 }{0} {\rho_1(\lambda)}{1}{0} {\rho_2(\lambda)}{0}{1}
\end{align}
On $(-S_\infty, S_\infty)$ the scattering matrix $V(\lambda)$ admits the following factorization:
\begin{align}
\label{V-2}
V(\lambda)&=V_{-}(\lambda)^{-1}V_{+}(\lambda)\\
\nonumber
                  &=\threemat{1}{0}{0}{-r_1(\lambda)}{1}{0}{-r_2(\lambda)}{0}{1}  \threemat{1}{- r^*_1(\lambda)}{- r^*_2(\lambda)}{0}{1}{0}{0}{0}{1}.
\end{align}
Using the relation given by \eqref{S-m}, at the self-intersection point $\pm S_\infty$, from \eqref{v21+} and \eqref{v31+} we deduce
\begin{align*}
e^{-2i ( \pm S_\infty ) x_0}v_{21} &=e^{-2i (\pm S_\infty )x_0}\dfrac{m_{33}^+(x_0)\left( m^+_{21}(x_0)  s_{11}+ m^+_{22}(x_0)  s_{21}+m^+_{23}(x_0)  s_{31} \right)   }{\bfs_{11}(\pm S_\infty)s_{11}(\pm S_\infty)}\\
            &\quad -e^{-2i (\pm S_\infty )x_0}\dfrac{m_{33}^+(x_0)\left( m^+_{21}(x_0)  s_{11}+ m^+_{22}(x_0)  s_{21}+m^+_{23}(x_0)  s_{31} \right)   }{\bfs_{11}(\pm S_\infty)s_{11}(\pm S_\infty)}\\
           &=e^{-2i (\pm S_\infty ) x_0} \dfrac{  \left(m_{33}^+(x_0)m_{21}^+(x_0) -m_{23}^+(x_0)m_{31}^+(x_0)  \right) s_{11}}{\bfs_{11}(\pm S_\infty)s_{11}(\pm S_\infty)}\\
           &\quad +e^{-2i (\pm S_\infty ) x_0}  \dfrac{  \left(m_{33}^+(x_0)m_{22}^+(x_0) -m_{23}^+(x_0)m_{32}^+(x_0)  \right) s_{21}}{\bfs_{11}(\pm S_\infty)s_{11}(\pm S_\infty)}\\
           &=-r_1(\pm S_\infty)+\rho_1(\pm S_\infty)
\end{align*}
and
\begin{align*}
e^{-2i (\pm S_\infty ) x_0}v_{31} &=e^{-2i (\pm S_\infty ) x_0}\dfrac{m_{22}^+(x_0)\left( m^+_{31}(x_0)  s_{11}+ m^+_{32}(x_0)  s_{21}+m^+_{33}(x_0)  s_{31} \right)   }{\bfs_{11}(\pm S_\infty)s_{11}(\pm S_\infty)}\\
            &\quad -e^{-2i (\pm S_\infty ) x_0}\dfrac{m_{32}^+(x_0)\left( m^+_{21}(x_0)  s_{11}+ m^+_{22}(x_0)  s_{21}+m^+_{23}(x_0)  s_{31} \right)   }{\bfs_{11}(\pm S_\infty)s_{11}(\pm S_\infty)}\\
           &=e^{-2i (\pm S_\infty )x_0} \dfrac{  \left(m_{31}^+(x_0)m_{22}^+(x_0) -m_{21}^+(x_0)m_{32}^+(x_0)  \right) s_{11}}{\bfs_{11}(\pm S_\infty)s_{11}(\pm S_\infty)}\\
           &\quad +e^{-2i (\pm S_\infty ) x_0}  \dfrac{  \left(m_{33}^+(x_0)m_{22}^+(x_0) -m_{23}^+(x_0)m_{32}^+(x_0)  \right) s_{21}}{\bfs_{11}(\pm S_\infty)s_{11}(\pm S_\infty)}\\
           &=-r_2(\pm S_\infty)+\rho_2(\pm S_\infty)
\end{align*}
We conclude that at the point $(\pm S_\infty, 0)$ the scattering matrix $V\restriction_{\Sigma_\infty^+}$ admits the following factorization:
\begin{align}
\label{V+S-inf}
V\restriction_{\Sigma_\infty^+}(\pm S_\infty)&=V_{-}(\pm S_\infty)^{-1}V_{+}(\pm S_\infty)\\
\nonumber
                 &=\threemat{1}{0}{0}{-r_1(\pm S_\infty)}{1}{0}{-r_2(\pm S_\infty)}{0}{1}  \threemat{1}{0}{0}{\rho_1(\pm S_\infty)}{1}{0}{\rho_2(\pm S_\infty)}{0}{1}.
\end{align}
Similarly at the point $(\pm S_\infty, 0)$ the scattering matrix $V\restriction_{\Sigma_\infty^-}$ admits the following factorization:
\begin{align}
\label{V-S-inf}
V\restriction_{\Sigma_\infty^-}(\pm S_\infty)&=V_{-}(\pm S_\infty)^{-1}V_{+}(\pm S_\infty) \\
\nonumber
                     &=\threemat{1}{-r_1^*(\pm S_\infty)}{-r_2^*(\pm S_\infty)}{0}{1}{0}{0}{0}{1}  \threemat{1}{\rho_1^*(\pm S_\infty)}{\rho_2^*(\pm S_\infty)}{0}{1}{0}{0}{0}{1}.
\end{align}
Now for $i=1,2$,  let $L_i(\lambda)$ be the linear polynomial such that 
$$L_i(\pm S_\infty)= e^{2i (\pm S_\infty) x_0}\rho_i(\pm S_\infty)$$
then we can rewrite \eqref{v-circle+} as
\begin{align}
\label{V3}
V(\lambda)&=V_-(\lambda)^{-1}V_+(\lambda)\\
\nonumber
                   &=e^{-i\lambda x_0 \ad\sigma}\left[ \threemat{1}{0}{0}{  v_{21}^{+}(\lambda)-L_1(\lambda) }{1}{0}{ v_{31}^{+}(\lambda)-L_2(\lambda) }{0}{1}\threemat{1}{0}{0}{L_1(\lambda)}{1}{0}{L_2(\lambda)}{0}{1} \right]
 \end{align}
 Similarly, we can rewrite \eqref{v-circle-} as
 \begin{align}
 \label{V4}
V(\lambda)&=V_-(\lambda)^{-1}V_+(\lambda)\\
\nonumber
                   &=e^{-i\lambda x_0 \ad\sigma} \left[ \threemat{1}{ v_{12}^{-}(\lambda) -L_1^*(\lambda)}{ v_{13}^{-}(\lambda)-L_2^*(\lambda)  } {0}{1}{0} {0}{0}{1}\threemat{1}{L_1^*(\lambda)}{L_2^*(\lambda) }{0}{1}{0}{0}{0}{1} \right].
 \end{align}
Now by comparing \eqref{V-1}-\eqref{V4} it is easy to check that $V_+(\lambda)\restriction_{\Omega_1}$ ($V_+(\lambda)\restriction_{\Omega_4}$ ) is a lower (upper) triangular matrix that satisfies the matching condition given by (ii) of Definition \ref{Hk}. Similarly, $V_-(\lambda)\restriction_{\Omega_3}$ ($V_-(\lambda)\restriction_{\Omega_2}$ ) is a lower (upper) triangular matrix that also satisfies the matching condition given by (ii) of Definition \ref{Hk}. The $L^2$ integrability of $V(\lambda)$ for $\lambda \in (-\infty, -S_\infty)\cup(S_\infty, +\infty)$ follows from the same proof as Proposition \ref{D1}.
\end{proof}
We have chosen $x_0\in \bbR$ to be such that the cut-off potential   $U_{x_0}=U\chi_{ (x_0, \infty )  }$ satisfies the condition $\norm{U_{x_0}}{L^1}\ll 1$. Without loss of generality we further make the assumption that the cut-off potential   $\tilde{U}_{x_0}=U\chi_{ (- \infty, -x_0 )}$ also atisfies the condition$\norm{\tilde{U}_{x_0}}{L^1}\ll 1$.
Let $\tilde{\bfM }$ be constructed as $\bfM$ from potential $ \tilde{U}_{x_0}$ with normalization at $x\to-\infty$. Then we define the auxiliary matrix $s$:
\begin{equation}
\label{auxiliary}
A(z)=e^{-ix z \ad\sigma}\tilde{\bfM}^{-1}(x,z)\bfM(x,z)
\end{equation}
For instance it is easy to see that for $z\in \Omega_1\cup\Omega_2$
$A(z) $ is given by \eqref{auxiliary}.
The jump matrix $\tilde{V}$ for $\tilde{ \bfM }$ is obtained from $V$ by conjugation
$$\tilde{V}=A_-^{-1}V A_+.$$
In analogy to Proposition \ref{scattering data} we have the following proposition:
\begin{proposition}
\label{scattering data tilde}
The matrix $\tilde{V}=A_-^{-1}V A_+$ admits a triangular factorization $\tildV(\lambda)=\tildV_-^{-1}(\lambda) \tildV_+(\lambda)$ where 
\begin{enumerate}
\item[1.]
$\tildV_{\pm}(\lambda)-I\in H^{1,1}(\Sigma_\pm) $;
\item[2.] $\tildV_+\restriction_{\partial\Omega_1}-I$ and $\tildV_-\restriction_{\partial\Omega_3}-I$ are strictly upper triangular while $\tildV_-\restriction_{\partial\Omega_2}-I$ and $\tildV_+\restriction_{\partial\Omega_4}-I$ are strictly lower triangular.
\end{enumerate}
\end{proposition}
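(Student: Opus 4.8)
The plan is to mirror the proof of Proposition \ref{scattering data} step for step. The only structural change is that $\tildV$ is the jump of $\tilde{\bfM}$, which is built from $\tilde{U}_{x_0}$ exactly as $\bfM$ was built from $U_{x_0}$ but with \emph{all} normalizations taken as $x\to-\infty$; interchanging the two ends reverses the triangular type of every unipotent factor, precisely as in the passage from the jump $V$ of Case I to its conjugate $\tildV$ in \eqref{V-tilde}, and this is what produces the transposed statement (2).

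First I would write out $\tildV=A_-^{-1}VA_+$ explicitly along each arc of $\Sigma$ (Figure \ref{figure-2}). Outside the circle $\Sigma_\infty$, on $(-\infty,-S_\infty)\cup(S_\infty,\infty)$, the conjugating matrix $A$ is the block-diagonal matrix of \eqref{auxiliary}, so the computation \eqref{V-tilde} applies verbatim and yields $\tildV=\tildV_-^{-1}\tildV_+$ with $\tildV_+-I$ strictly upper triangular and $\tildV_--I$ strictly lower triangular, the nontrivial entries being the coefficients $\tilde\rho_1,\tilde\rho_2$ of \eqref{V-tilde} and their conjugates. On the segment $(-S_\infty,S_\infty)$, where $\tilde{\bfM}$ plays the role that $M^{(2)}$ plays for $U_{x_0}$, the same manipulation applied to \eqref{V-R-2} gives a factorization in which $\tildV_+-I$ is again strictly upper and $\tildV_--I$ strictly lower triangular, whose entries are the reflection coefficients $\tilde r_1,\tilde r_2$ of the cut-off potential $\tilde{U}_{x_0}$. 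Along the two arcs $\Sigma_\infty^\pm$ I would evaluate $\tildV$ at $x=-x_0$ and express it through the Jost functions, obtaining the counterparts of \eqref{v-circle+}--\eqref{v-circle-} but with the two unipotent factors interchanged relative to those formulas.

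The crux, and what I expect to be the main obstacle, is the matching condition (2) of Definition \ref{Hk} at the two self-intersection points $(\pm S_\infty,0)$: I must check that the one-sided limits of $\tildV_+$ from $\partial\Omega_1$ and from $\partial\Omega_4$ agree there, and likewise those of $\tildV_-$ from $\partial\Omega_2$ and $\partial\Omega_3$. This is the exact analog of the explicit computation in the proof of Proposition \ref{scattering data} that produced $e^{-2i(\pm S_\infty)x_0}v_{21}=-r_1+\rho_1$ (and its companion for $v_{31}$): one expands the relevant $2\times2$ minors of the entries $m^\pm_{jk}(-x_0,\cdot)$ using the scattering relation \eqref{S-m} and the symmetry reductions \eqref{s-t-1}--\eqref{s-t-3}, finds that the mismatch between the two limits is an affine function of $\lambda$, and removes it by subtracting from $\tildV_\pm$ the interpolating linear polynomials $\tilde L_i(\lambda)$ (the counterparts of the $L_i$ of \eqref{V3}--\eqref{V4}). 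The regularity $\tildV_\pm-I\in H^{1,1}(\Sigma_\pm)$ then follows as in Propositions \ref{D1} and \ref{scattering data}: on the unbounded rays the nontrivial entries are $\tilde\rho_1,\tilde\rho_2$, which lie in $H^{1,1}(\bbR)$ because $\tilde{U}_{x_0}\in H^{1,1}(\bbR)$ and the proof of Proposition \ref{D1} applies word for word, while on the bounded arcs the entries are finite products and quotients of $m^\pm_{jk}(-x_0,\cdot)$ with $1/s_{11}$ and $1/\bfs_{11}$, which are real-analytic and bounded there since neither $s_{11}$ nor $\bfs_{11}$ vanishes on or outside $\Sigma_\infty$. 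One could also try to deduce the proposition from Proposition \ref{scattering data} applied to the reflected potential $x\mapsto -U(-x)$, which for $\eps=1$ is admissible and interchanges the normalizations at $\pm\infty$; the care needed there is in bookkeeping the induced orientation reversal of $\Sigma$, which is what converts the triangular factorization of $V$ into that of $\tildV$.
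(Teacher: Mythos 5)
Your proposal is correct and is essentially the proof the paper intends: the paper offers no argument beyond ``in analogy to Proposition \ref{scattering data},'' and your plan simply carries out that analogy — conjugating by $A$ reduces to the computation \eqref{V-tilde} on the outer rays, the cut-off reflection coefficients of $\tilde{U}_{x_0}$ appear on $(-S_\infty,S_\infty)$, and the matching at $(\pm S_\infty,0)$ is repaired by interpolating polynomials exactly as in \eqref{V3}--\eqref{V4}, with all triangularities transposed because the normalization is now at $x\to-\infty$. No substantive gap; the only cosmetic point is that the polynomials $L_i$ interpolate the boundary values at $\pm S_\infty$ rather than an ``affine mismatch,'' but your construction uses them the same way.
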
 
\subsection{Inverse Scattering}
We study the following Riemann-Hilbert problem:
\begin{problem}
\label{RHP-3}
Given scattering data characterized by proposition \ref{scattering data},  for fix $x \in \bbR$, find $\bfM(x,\dotarg)$ with the following properties:
\begin{enumerate}
\item[(i)](Analyticity) $\bfM(x, z)$ is a  matrix-valued  analytic function of $z$ for $z\in \mathbb{C}\setminus\Gamma $  where
$$\Sigma= \mathbb{R} \cup \Sigma_\infty  $$
\item[(ii)] (Normalization) 
${\bfM}(x,z)=I+\mathcal{O}(z^{-1})$ as $z \rightarrow\infty$.
 
\item[(iii)] (Jump condition) For each $\lambda\in\Sigma $, $\bfM$ has continuous 
boundary values
$\bfM_{\pm}(\lambda)$ as $z \rarr \lambda$ from $\Omega_\pm$. 
Moreover, the jump relation 
$$\bfM_+(x,\lambda)=\bfM_-(x,\lambda) V_x(\lambda)$$ 
holds, where for $\lambda\in(-\infty, -S_\infty)\cup(S_\infty, \infty)$
$$
V_x(\lambda)=e^{i\lambda x \ad \sigma}\threemat{1+ |\rho_1(\lambda)|^2+|\rho_2(\lambda)|^2}{ \rho^*_1(\lambda)}{ \rho^*_2(\lambda)}{\rho_1(\lambda)}{1}{0}{\rho_2(\lambda)}{0}{1}
$$
 and for $\lam \in(-S_\infty, S_\infty) $ 
 $$ V_x(\lambda)= e^{i\lambda x \ad \sigma} \threemat{1}{- r^*_1(\lambda)}{- r^*_2(\lambda)}{-r_1(\lambda)}{1+ |r_1(\lambda)|^2}{r_1(\lambda) r_2^*(\lambda)}{-r_2(\lambda)}{r_1^*(\lambda) r_2(\lambda)}{1+ |r_2(\lambda)|^2}$$
 and for $\lambda\in\Sigma_\infty $
$$
V_x(\lambda) = 	\begin{cases}
						e^{i\lambda (x-x_0 )\ad \sigma}\threemat{1}{0}{0} {v_{21}^+} {1}{0}{v_{31}^+}{0}{1}	&	\lambda \in \Sigma_\infty^+, \\
						\\
						e^{i\lambda (x-x_0) \ad \sigma}\threemat{1}{v_{12}^-}{v_{13}^-} {0 } {1}{0}{0}{0}{1}
							&	\lambda \in  \Sigma_\infty^-
					\end{cases}
$$

\end{enumerate}
\end{problem}
\begin{proposition}
The Riemann-Hilbert problem \ref{RHP-3} has a unique solution.
\end{proposition}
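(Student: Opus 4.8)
The plan is to reduce Problem~\ref{RHP-3} to an equivalent singular integral equation on the augmented contour $\Sigma$, show that the associated operator $I-\mathcal{C}_W$ is Fredholm of index zero, and then kill its kernel with Zhou's vanishing lemma. By Proposition~\ref{scattering data} the jump matrix factorizes as $V=V_-^{-1}V_+=(I-W_-)^{-1}(I+W_+)$ with $W_\pm\in H^{1,1}(\Sigma_\pm)$ and, crucially, with triangular factors that match across the self-intersection points $(\pm S_\infty,0)$ in the sense of condition~(2) of Definition~\ref{Hk}; conjugating by $e^{i\lambda x\ad\sigma}$ produces, for fixed $x$, factors $W_{x\pm}\in H^{1,1}(\Sigma_\pm)$ inheriting the same triangular type on each $\partial\Omega_j$. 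Since $\Sigma$ is a \emph{complete contour} --- each arc being at once the positive boundary of one region and the negative boundary of another, as recorded in \eqref{Sigma_+}--\eqref{Sigma_-} --- the Cauchy projections $C^\pm_\Sigma$ are bounded on the decomposing algebra $H^1(\Sigma_\pm)$ by \cite[Proposition~2.1]{Zhou89}. Problem~\ref{RHP-3} is then equivalent to solving
\[
\mu(x,\lambda)=I+\mathcal{C}_W\mu(x,\lambda)=I+C^+_\Sigma(\mu W_{x-})(x,\lambda)+C^-_\Sigma(\mu W_{x+})(x,\lambda)
\]
in $I+L^2(\Sigma)$, after which
\[
\bfM(x,z)=I+\frac{1}{2\pi i}\int_\Sigma\frac{\mu(x,\lambda)\bigl(W_{x+}+W_{x-}\bigr)(\lambda)}{\lambda-z}\,d\lambda
\]
is the solution of the RHP; thus existence and uniqueness for Problem~\ref{RHP-3} both reduce to invertibility of $I-\mathcal{C}_W$ on $L^2(\Sigma)$.

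Next I would invoke \cite[Propositions~4.1,~4.2]{Zhou89}: for $W_\pm$ in the relevant Sobolev class on a complete contour, $I-\mathcal{C}_W$ is Fredholm of index zero. It remains to prove $\ker(I-\mathcal{C}_W)=0$, and here Proposition~\ref{schwarz invariance} supplies exactly the input needed for the vanishing lemma \cite[Theorem~9.3]{Zhou89}: $V(\lambda)+V(\lambda)^\dagger$ is positive definite on $\bbR$ and $V(\lambda^*)=V(\lambda)^\dagger$ on $\Sigma\setminus\bbR$, and both properties persist after the $e^{i\lambda x\ad\sigma}$ conjugation since that conjugation is unitary on $\bbR$ and Schwarz-covariant off $\bbR$. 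Concretely, an element of $\ker(I-\mathcal{C}_W)$ yields a sectionally analytic $M$ with $M=\mathcal{O}(z^{-1})$ at infinity and homogeneous jump $M_+=M_-V_x$; the Schwarz symmetry together with the Schwarz-invariance of $\Sigma$ makes $H(z)=M(z)M(z^*)^\dagger$ sectionally analytic with a jump only on $\bbR$ and $H=\mathcal{O}(z^{-2})$ at infinity, so Cauchy's theorem gives $\int_\bbR\bigl(M_+M_-^\dagger+M_-M_+^\dagger\bigr)\,d\lambda=0$; substituting $M_-=M_+V_x^{-1}$ and using positivity of $V_x+V_x^\dagger$ on $\bbR$ forces $M_+\equiv 0$ there, hence $M\equiv 0$. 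Therefore $I-\mathcal{C}_W$ is bijective and Problem~\ref{RHP-3} has a unique solution; the same argument applies verbatim to $\tilde{\bfM}$ via Proposition~\ref{scattering data tilde}.

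The main obstacle --- and the feature distinguishing Case~III from Cases~I and~II, where the extra contour pieces $\gamma_i$ are disjoint circles sitting inside the open half-planes --- is the genuine self-intersection of $\Sigma$ at $(\pm S_\infty,0)$: at such a point the Cauchy operator need not be bounded on arbitrary $L^2$ data, so the reduction above is legitimate only because the factors $W_\pm$ lie in the decomposing algebras $H^{1,1}(\Sigma_\pm)$. Securing this is precisely the content of Proposition~\ref{scattering data}, whose explicit evaluation of $v_{21},v_{31}$ at $\pm S_\infty$ produces the matching values $-r_i(\pm S_\infty)+\rho_i(\pm S_\infty)$ and thereby pins down the triangular type of $V_\pm$ on each of $\partial\Omega_1,\dots,\partial\Omega_4$. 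One must also check that the vanishing lemma survives the self-intersection, which it does: near $\pm S_\infty$ the jump matrices on $\bbR$ given by \eqref{V-R-1} and \eqref{V-R-2} are each $I$ plus a strictly triangular matrix, so $V+V^\dagger$ stays positive definite up to and including $\pm S_\infty$, and $\Sigma$ was constructed to be Schwarz-symmetric.
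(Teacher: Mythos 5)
Your proposal follows the same route as the paper: reduce Problem~\ref{RHP-3} to the Beals--Coifman singular integral equation $\mu=I+\mathcal{C}_W\mu$ on $\Sigma$, invoke \cite[Propositions 4.1, 4.2]{Zhou89} for the Fredholm index-zero property (which hinges on the decomposing-algebra/matching structure of $V_\pm$ secured by Proposition~\ref{scattering data}), and combine the Schwarz symmetries of Proposition~\ref{schwarz invariance} with Zhou's vanishing lemma to conclude $\ker(I-\mathcal{C}_W)=0$ and hence invertibility. The only difference is that you write out the vanishing-lemma computation and the role of the self-intersection points explicitly, where the paper delegates these to the citations; the substance is identical.
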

\begin{proof}
By standard Riemann-Hilbert theory, the existence and uniqueness of the solution to Problem \ref{RHP-3} is determined by the existence and uniqueness of the following singular integral equation:
\begin{align}
\label{SIE-3}
\mu(x,\lambda)&=I+\mathcal{C}_W \mu(x,\lambda)\\
\nonumber
                        &=I+C^+_\Sigma (\mu W_{x-})(x,\lambda)+C^-_\Sigma (\mu W_{x+})(x,\lambda)
\end{align}
where we have given the explicit form of $W_\pm=\pm V_\pm \mp I$  in the proof of Proposition \ref{scattering data}. It is shown in \cite[Proposition 4.1, 4.2]{Zhou89} that the operator $I-\mathcal{C}_W$ is Fredholm and has Fredholm index zero. More importantly, we remark that the characterization of scattering matrix by Proposition \ref{scattering data} i.e. $V_\pm-I\in H^{1}(\Sigma_\pm)$ allows uniform approximation by rational functions in $L^\infty$ norm.  It also follows from Proposition \ref{schwarz invariance}  and \cite[Proposition 9.3]{Zhou89} that $ker(I-\mathcal{C}_W)=0$. Thus the operator $I-\mathcal{C}_W$ is invertible and the solution to Problem \ref{RHP-2} is given by 
\begin{equation}
\label{M-III}
M(x,z)=I+\dfrac{1}{2\pi i}\int_\Sigma \dfrac{\mu(x,\lambda)\left( W_{x+}+W_{x-} \right) }{\lambda-z}d\lambda.
\end{equation}
\end{proof}
\begin{proposition}
\label{prop-decay-1}
Given the jump matrix $V(\lambda)$ characterized by Proposition \ref{scattering data},   $u(x), v(x)\in H^{0,1}(\bbR)$.
\end{proposition}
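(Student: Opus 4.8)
The plan is to reduce the decay estimate to exactly the kind of Cauchy-operator bound used in Proposition \ref{prop-decay}, now carried out over the augmented contour $\Sigma = \bbR \cup \Sigma_\infty$. First I would reconstruct $u, v$ from the solution $\bfM$ of Problem \ref{RHP-3} via the same limit \eqref{reconstruction}, so that the object to be estimated is the $x$-behavior of
$$
\int_\Sigma \mu(x,\lambda)\left( W_{x+}+W_{x-} \right) d\lambda,
$$
split as $\int_1 + \int_2 + \int_3$ with $\mu = (I-\calC_W)^{-1}I$ exactly as in \eqref{int-decay}. The key structural input is Proposition \ref{scattering data}: $V_\pm - I \in H^{1,1}(\Sigma_\pm)$ and the triangularity is arranged along $\partial\Omega_1,\dots,\partial\Omega_4$ so that the matching condition of Definition \ref{Hk} holds at the self-intersection points $(\pm S_\infty, 0)$. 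This is precisely what makes the Cauchy projections $C^\pm$ bounded on $H^1(\Sigma_\pm)$, and hence what makes Lemma \ref{lemma-resol} (the uniform resolvent bound for $(I-\calC_W)^{-1}$) and the decay estimate of Lemma \ref{lemma-decay} go through with $\bbR$ replaced by $\Sigma$.

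Next I would treat the three pieces separately, following the template of Proposition \ref{prop-decay}. For $\int_1 = \int_\Sigma (W_{x-}+W_{x+})$: on the unbounded part $(-\infty,-S_\infty)\cup(S_\infty,\infty)$ the integrand carries the oscillatory factor $e^{\pm 2i\lambda x}$ built into $W_{x\pm}$ and $W_\pm - I \in H^{1,1}$, so standard Fourier theory (Riemann–Lebesgue plus the $H^1$-weight) yields $O((1+x^2)^{-1})$; on the compact arcs $\Sigma_\infty$ and $(-S_\infty,S_\infty)$ the jump data $r_1,r_2, v_{ij}^\pm$ are analytic (they are ratios of Jost functions, with $\bfs_{11}, s_{11}$ non-vanishing on the relevant compact set), so one integrates by parts in $\lambda$ along each smooth arc; the boundary terms at $(\pm S_\infty, 0)$ cancel by the matching condition, and each integration by parts gains a factor $1/x$. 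For $\int_2 = \int_\Sigma (\calC_W I)(W_{x-}+W_{x+})$, triangularity of $W_\pm$ on each $\partial\Omega_j$ forces the relevant $(2,1)$ and $(3,1)$ entries to vanish, exactly as in Proposition \ref{prop-decay}, so $\int_2$ contributes nothing. For $\int_3 = \int_\Sigma (\calC_W(\mu-I))(W_{x-}+W_{x+})$, I would first record $\norm{\mu - I}{L^2(\Sigma)} \lesssim (1+x^2)^{-1/2}$ from Lemma \ref{lemma-resol} and the analogue of Lemma \ref{lemma-decay}; then use $C^+ - C^- = I$, the triangular structure of $W_\pm$ along the complete contours $\Sigma_\pm$, and analyticity of $(C^\pm f)(C^\pm g)$ in $\Omega_\pm$ to rewrite the relevant entries of $\int_3$ as a pairing of $C^\pm(\mu-I)W_{x\mp}$ against $C^\mp(W_{x\pm})$, and close with Cauchy–Schwarz, the algebra property of $H^1$, and the decay of $\norm{C^\pm_{W_{x\mp}}}{L^2}$. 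This gives $xu(x), xv(x)\in L^2$ for $x\geq 0$.

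Finally, for $x\leq 0$ I would pass to $\tilde{\bfM}$ with jump $\tilde V = A_-^{-1} V A_+$, using Proposition \ref{scattering data tilde}, whose triangularity along $\partial\Omega_1,\dots,\partial\Omega_4$ is reversed relative to $V$; this is precisely the opposite factorization needed to run the same argument in the other direction, and since $A(z)\to I$ as $z\to\infty$, formula \eqref{auxiliary} and \eqref{reconstruction} show the two Riemann–Hilbert problems produce the same reconstructed potential. Combining the two half-lines gives $u,v\in H^{0,1}(\bbR)$. The main obstacle I anticipate is bookkeeping at the self-intersection points $(\pm S_\infty, 0)$: one must verify that the boundary terms generated by integration by parts along the arcs of $\Sigma_\infty$ against those from the real segments cancel, which is exactly where the explicit expressions \eqref{v-circle+}–\eqref{v-circle-} and the matching identities derived in Proposition \ref{scattering data} for $v_{21}^+, v_{31}^+$ (namely $e^{-2i(\pm S_\infty)x_0}v_{21}^+ = -r_1 + \rho_1$, etc.) are needed; without that cancellation one only gets an $O(1)$ bound rather than decay. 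Everything else is a transcription of the Case I argument onto a compact-plus-unbounded contour.
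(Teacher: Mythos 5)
Your outline correctly identifies the target (decay of the reconstruction integral over $\Sigma$ for $x\geq 0$, then the reflected argument via Proposition \ref{scattering data tilde} for $x\leq 0$), but it treats the self-intersecting contour as if the Case I argument could be transcribed verbatim, and this is where the gap lies. The central difficulty is not that boundary terms from integration by parts must cancel at $(\pm S_\infty,0)$; it is that the factors $V_\pm-I$ do \emph{not vanish} at the self-intersection points (e.g.\ $V_+-I$ on $\partial\Omega_1$ takes the value with entries $\rho_i(\pm S_\infty)$ there). The matching condition of Definition \ref{Hk} only guarantees agreement of the two one-sided limits, i.e.\ membership in $H^1(\Sigma_\pm)$; it does not give vanishing. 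Without vanishing, the restricted Cauchy projections $C^\pm_{\Sigma'\to\Sigma^\pm}$ applied to $\calW_{x\mp}$ do not decay in $x$, and the $L^2(\Sigma_\infty^\pm)$ bounds \eqref{lower-c}--\eqref{upper-c} fail: the exponential factor $e^{\pm 2i\lambda x}$ degenerates as $\lambda\to\pm S_\infty$ along the arcs, and one needs $\calW_\pm(\pm S_\infty)=0$ (combined with $H^1$ regularity) to recover the $(1+x^2)^{-1/2}$ rate. This is exactly why the paper first constructs the rational functions $\omega_\pm$ with the same triangularity as $V_\pm$, with $\omega_\pm-I=O(z^{-2})$, and with $\omega_\pm=V_\pm+o((z-a)^{k-1})$ at $a=\pm S_\infty$, and then replaces $V$ by $\calV=(V_-\omega_-^{-1})^{-1}(V_+\omega_+^{-1})$ so that $\calV_\pm(\pm S_\infty)=I$; your proposal omits this reduction entirely. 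Relatedly, your mechanism for $\int_1$ on the compact arcs (one integration by parts gaining $1/x$) only yields $|\int_1|=O(1/x)$, which does not place $x\int_1$ in $L^2([1,\infty))$; the correct mechanism is the exponential decay of the correctly-oriented triangular factor on the correct arc, quantified by \eqref{lower-c}--\eqref{upper-c}.

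The second omission is structural: the paper does not use the three-term splitting of Case I. On $\Sigma$ the two triangular factors live on the complete contours $\Sigma_+=\partial\Omega_1\cup\partial\Omega_4$ and $\Sigma_-=\partial\Omega_2\cup\partial\Omega_3$, and the triangularity of $V_+$ flips between $\partial\Omega_1$ and $\partial\Omega_4$, so the Case I manipulation of $\int(\calC_W(\mu-I))W$ via $C^+-C^-=I$ and analytic continuation into half-planes does not apply. The paper instead peels off one more term of the Neumann series, writing the reconstruction integral as $\int_1+\int_2+\int_3+\int_4$ with $\int_3=\int((\calC_{\calW})^2I)\calW$ and $\int_4$ built from $g=(1-\calC_{\calW})^{-1}((\calC_{\calW})^2I)$, estimates each lower-triangular block using the component-wise bounds \eqref{C+}--\eqref{CC}, and additionally augments the contour with an ellipse to reverse the orientation of $(-S_\infty,S_\infty)$ so that the real-line Cauchy estimates apply. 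Your sketch would need all three of these devices (the $\omega_\pm$ reduction, the orientation-reversing augmentation, and the second-order Neumann splitting) before the estimate $|x|\,|u(x)|,|x|\,|v(x)|\in L^2$ actually closes.
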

We show the estimate for $x\geq 0$ first. Following  a reduction technique from \cite{Zhou98}, we  construct functions $\omega\in\bfA( {\mathbb{C}}\setminus \Sigma )$ such that for $k=1$
\begin{enumerate}
\item[1.] $\omega_\pm \in R(\partial \Omega_\pm )$ and  $\omega_\pm-I=O(z^{-2})$ as $z\rightarrow \infty$.
\item[2.] $\omega_\pm$ has the same triangularity as $V_\pm$, and
\item[3.] $\omega_\pm(z)=V_\pm(z)+o((z-a)^{k-1})$ for $a\in\bbR\cap \Sigma_\infty$.
\end{enumerate}
The construction of $\omega_\pm$  is given in \cite[Appendix I]{Zhou89-2}. For example,  consider the approximation of $V_+\restriction_{ \partial \Omega_1}$ . 
Since $(V_+ -I)\restriction _{\partial \Omega_1}$ is in $H^1$, we construct a rational function $\omega_+$ such that $(\omega_+-V_+)\restriction \partial \Omega_1$ vanishes at $\pm S_\infty$ to order  1. Explicitly 
$$\omega_+(\pm S_\infty)-I=\threemat{0}{0}{0}{\rho_1(\pm S_\infty)}{0}{0}{\rho_2(\pm S_\infty)}{0}{0}.$$
This is attained through the following steps: for $k=1,2$
\begin{enumerate}
\item[(i)]  Choose $z_k \notin \overline{\Omega}_1$ and denote $p_{k\pm}$ the Taylor polynomial of degree 0 of $(z-z_k)^n\rho_k(z)$ at $z=\pm S_\infty $. We choose $n\geq 3$.
\item[(ii)] By \cite[Lemma A1.2]{Zhou89-2}, there is  a polynomial $p_k(z)$ of degree at most $1$ such that
$$p_k(z) -  p_{k\pm}(z) = O(z \mp S_\infty).$$
\item[(iii)] Set $\omega_{k+}(z)= (z-z_k)^{-n}p_k(z)$. Clearly, $\omega_{k+}(z)-\rho_k(z)$   vanishes at $\pm S_\infty$ to  order  1. Since $n\geq 3$,  $\omega_+-I\in H^{1,1}(\partial\Omega_1)$ and $\omega$ is analytic in $\Omega_1$.
\end{enumerate}
 We have 
$$V=\omega_-^{-1}(V_-\omega_-^{-1})^{-1}(V_+\omega_+^{-1})\omega_+\equiv \omega_-^{-1}\mathcal{V}_-^{-1}\calV_+\omega_+\equiv \omega_-^{-1} \calV \omega_+.$$
The advantage of working with $\calV$ is that  
 $\calV_\pm$ vanishes at $\pm S_\infty$
 \begin{equation}
\label{calV}
\calV_\pm(\pm S_\infty)=I. 
\end{equation}
For   $x\geq0$, $\calV_x$ is the jump condition for the  Riemann-Hilbert problem
$$\mathcal{M}_+(x,\lambda)=\mathcal{M}_-(x,\lambda ) \calV_x(\lambda) \quad \lambda\in \Sigma$$
if and only if  $V_x$ is the jump condition for the Riemann-Hilbert problem \ref{RHP-3}. Here $\bfM=\mathcal{M} e^{i\lambda x\ad\sigma}\omega$ where $e^{i\lambda x\ad \sigma}\omega\in \bfA L^{\infty}(\mathbb{C}\setminus\Sigma)\cap \bfA L^2(\bbC \setminus\Sigma)$   is  guaranteed by  construction.Thus
$$\int_{\Sigma}\mu(x,\lambda) 
       e^{i\lambda x \ad\sigma} \left(W_+(\lambda) +W_- (\lambda) \right)d\lambda= \int_{\Sigma}\mu(x,\lambda) 
       e^{i\lambda x \ad\sigma} \left(\calW_+(\lambda) +\calW_- (\lambda) \right)d\lambda$$
which 
shows that   $\omega$ gives no contribution to the reconstruction of $q$ for $x\geq 0$.  Thus, we may  as well work with the new jump matrix $\calV=(I-\calW_-)^{-1}(I+\calW_+)$.

The next step consists of augmenting the contour as in  figure \ref{fig:new.mod.contour} below. The advantage of this new contour is that it reverses the orientation of the segment $ (S_\infty^-, S_\infty^+)$ and thus allows  to prove usual estimates of the Cauchy projections when the contour is restricted to $\bbR$. The added ellipse has no effect of the RHP since the jump matrices there are chosen to be the identity.
 \begin{figure}[H]
\caption{The newly  modified contour}

\vskip 0.5cm

\begin{tikzpicture}
\draw[thick,->]	(-6,0) 	-- 	(-4,0);
\draw[thick]		(-4,0)		--	(-2,0);
\draw[thick,->-]	(-2,0)			arc(180:0:2);
\draw[thick,->-]	(-2,0)	--	(2,0);
\draw[thick,->-]	(-2,0)			arc(180:360:2);
\draw[thick,->]	(2,0)	--	(4,0);
\draw[thick]		(4,0)	--	(6,0);
\draw[thick,dashed,->-]	(2,0) arc(0:180:2cm and 1cm);
\draw[thick,dashed,->-]		(2,0) arc(360:180:2 cm and 1 cm);
\draw[black,fill=black]		(-2,0)		circle(0.06cm);
\draw[black,fill=black]		(2,0)		circle(0.06cm);
\node[below] at (-2.75,0)	{$(-S_\infty,0)$};
\node[below] at (2.55,0)	{$(S_\infty,0)$};
\node[above]	 at (3.5,0)				{$+$};
\node[below]	 at (3.5,0)				{$-$};
\node[above]  at (1.25,0)				{$+$};
\node[below]	 at (1.25,0)				{$-$};
\end{tikzpicture}
\label{fig:new.mod.contour}
\end{figure}

We redefine $\calV_\pm$ as follows:
\begin{enumerate}
\item[1.] $\calV_\pm=I$ on the added ellipse, 
\item[2.] $\calV_\pm(\lambda)$ is the lower/upper triangular factor in the lower/upper triangular factorization of $\calV$ ($\calV^{-1}$) on $\bbR$ for $|\lambda|>|S_\infty|$, ($|\lambda|<|S_\infty|$) and
\item[3.] for $\lambda\in \Sigma_\infty$, $\calV_\pm(\lambda)=I$ for $\imag \lambda\lessgtr 0$ and $\calV_\pm(\lambda)=\calV(\lambda)$ for $\imag \lambda\gtrless 0$.
\end{enumerate}
The newly defined $\calV_\pm$ satisfy all properties listed in Proposition \ref{scattering data}.
\begin{lemma}
\label{resolvent-3}
$\left\Vert (1-C_{\calW} )^{-1}\right\Vert_{L^2(\Sigma)\circlearrowleft }$ is uniformly bounded for any $x\in(c, \infty)$, $c\in\bbR$ where
$$C_{\calW  }\phi=C^+\phi(\calW_{x+} )+C^-\phi(\calW_{x-})  $$
\end{lemma}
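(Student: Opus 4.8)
The plan is to follow the proof of Lemma~\ref{lemma-resol} (equivalently \cite[Proposition~4.2]{JLPS18} and \cite[Lemma~2.3]{Zhou98}), the only genuinely new feature being that the base contour is now the augmented, self-intersecting $\Sigma$ together with the added ellipse. I would split the argument into three steps: (a) invertibility of $1-C_\calW$ at each fixed $x$; (b) reduction of the $x$-uniform bound to one over $\bbR$ alone; (c) the real-line bound.

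For (a), I would invoke \cite[Propositions~4.1, 4.2]{Zhou89} to see that $1-C_\calW$ is Fredholm of index zero on $L^2(\Sigma)$; this uses that $C^\pm$ are bounded on $L^2(\Sigma)$ and on $H^1(\Sigma_\pm)$ (\cite[Proposition~2.1]{Zhou89}), which in turn rests on $\Sigma_\pm$ being complete contours and on the matching condition of Definition~\ref{Hk} at the self-intersection points $(\pm S_\infty,0)$ --- precisely what Proposition~\ref{scattering data} and \eqref{calV} provide (indeed $\calV_\pm(\pm S_\infty)=I$ there). Triviality of the kernel would then come from the vanishing lemma \cite[Proposition~9.3]{Zhou89}: by Proposition~\ref{schwarz invariance}, which still holds for the redefined $\calV_\pm$, one has $\calV+\calV^\dagger>0$ on $\bbR$ and $\calV(\lambda^*)=\calV(\lambda)^\dagger$ on $\Sigma\setminus\bbR$; since for real $\lambda$ the map $e^{i\lambda x\ad\sigma}$ is conjugation by the unitary matrix $\mathrm{diag}(e^{-i\lambda x},e^{i\lambda x},e^{i\lambda x})$, inserting the oscillatory factor preserves positivity, $e^{i\lambda x\ad\sigma}\calV+(e^{i\lambda x\ad\sigma}\calV)^\dagger=e^{i\lambda x\ad\sigma}(\calV+\calV^\dagger)>0$, and preserves the Schwarz symmetry off $\bbR$. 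Hence $(1-C_\calW)^{-1}$ exists for every $x$.

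For (b), the jumps on the added ellipse are the identity and drop out. On the non-trivial arcs of $\Sigma_\infty$ the jump carries the factor $e^{i\lambda(x-x_0)\ad\sigma}$ with $\imag\lambda>0$ on $\Sigma_\infty^+$, where the modified entries are $(2,1)$ and $(3,1)$, and $\imag\lambda<0$ on $\Sigma_\infty^-$, where they are $(1,2)$ and $(1,3)$; for $x$ larger than $x_0$ the corresponding entries of $\calW_{x\pm}$ therefore decay exponentially in $x$, so $C_\calW$ differs from the operator generated by $\calW_{x\pm}\restriction_\bbR$ by one whose $L^2(\Sigma)$-operator norm is $O(e^{-\alpha x})$. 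On any bounded subinterval of $(c,\infty)$ the resolvent is already bounded by continuity of $x\mapsto C_\calW$ in operator norm (the coefficients depend continuously on $x$ in $L^\infty$, since $\|C_\calW\|\lesssim\|\calW_{x\pm}\|_{L^\infty}$), so by the second resolvent identity the whole claim reduces to a uniform-in-$x$ bound for the real-line operator --- which is exactly what the ellipse was introduced for: it reorients $(-S_\infty,S_\infty)$ so that $\bbR$ sits in $\Sigma$ as a consistently oriented sub-contour on which the usual Cauchy-projection estimates apply.

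For (c), on $\bbR$ the jump is the pure-radiation matrix of \eqref{V-R-1}--\eqref{V-R-2} with the triangular factorizations \eqref{V-1}--\eqref{V-2}, so I am back in the setting of Lemma~\ref{lemma-resol}: the resolvent is bounded on $L^2(\bbR)$ by a constant depending only on the $x$-independent norms $\|\rho_j\|_{H^{1/2+\eps}}$ and $\|r_j\|_{H^{1/2+\eps}}$, $j=1,2$, using compactness of $H^1\hookrightarrow H^{1/2+\eps}$, index zero, and triviality of the kernel (as in (a)). The step I expect to be the main obstacle is making (c) genuinely uniform in $x$ despite the oscillatory factor $e^{i\lambda x\ad\sigma}$: one must carry over the argument of \cite[Proposition~4.2]{JLPS18}, in which the oscillation is absorbed using that multiplication by $e^{i\lambda x\sigma_j}$ is unitary on $L^2(\bbR)$ and the Cauchy projections are thereby turned into translated projections, and one must check that this manipulation is not spoiled by the self-intersections at $(\pm S_\infty,0)$ --- which is precisely why $\Sigma$ was augmented and why $\calV_\pm$ was arranged to equal $I$ there.
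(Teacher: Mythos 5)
The paper never writes out a proof of Lemma \ref{resolvent-3}; it is meant to be the analogue, on the augmented contour, of Lemma \ref{lemma-resol}, whose proof is itself deferred to \cite[Proposition 4.2]{JLPS18} and \cite{Zhou98}. Your three-step skeleton --- Fredholm index zero plus the vanishing lemma for fixed-$x$ invertibility (with the correct observation that conjugation by the unitary $e^{i\lambda x\sigma}$ preserves both the positivity of $\calV+\calV^\dagger$ on $\bbR$ and the Schwarz symmetry off $\bbR$), reduction of the $x$-uniformity to the real-line operator, and the compact-embedding argument there --- is exactly the intended route, and the role you assign to the added ellipse and to the matching condition at $(\pm S_\infty,0)$ is the right one.

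There is, however, one concrete inaccuracy in step (b). You claim that on $\Sigma_\infty^\pm$ the relevant entries of $\calW_{x\pm}$ decay like $e^{-\alpha x}$ because $\imag\lambda$ has a sign there. This fails near the self-intersection points: the arcs of $\Sigma_\infty$ terminate at $\pm S_\infty\in\bbR$, so $\sup_{\lambda\in\Sigma_\infty^+}|e^{2i\lambda(x-x_0)}|=1$ for every $x$, and there is no uniform exponential rate. What actually produces decay is precisely the $\omega$-reduction \eqref{calV}: $\calW_\pm(\pm S_\infty)=0$ together with the $H^1$ (hence H\"older-$1/2$) regularity gives $\left\Vert \calW_{x\pm}\right\Vert_{L^\infty(\Sigma_\infty^\pm)}=\bigO{x^{-1/2}}$ and the $L^2$ bounds \eqref{lower-c}--\eqref{upper-c}; without the vanishing at the corners the circle contribution would not decay at all. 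This polynomial decay is still enough for your second-resolvent-identity perturbation off the real-line operator, so the gap is repairable, but the mechanism you cite is not the one that works, and you should route this step through the vanishing of $\calW_\pm$ at $\pm S_\infty$ rather than through the sign of $\imag\lambda$.
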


\begin{definition}
\label{contour-Gamma}
We define the following contours which are subsets of the contour $\Gamma$:
\begin{equation}
\label{Gamma_pm}
\Sigma^\pm=\bbR \cup \left( \lbrace \imag \lambda \gtrless 0 \rbrace \cap \Sigma \right),
\end{equation}
Note that this is different from $\Sigma_\pm$ given by \eqref{Sigma_+}-\eqref{Sigma_-}.
\begin{equation}
\label{Gamma'}
\Sigma'  :=   \text{either}  \; \Sigma_\infty \cap \lbrace \imag \lambda \gtrless 0 \rbrace \, \, \text{or}  \, \,\bbR.
\end{equation}
\end{definition}
\begin{lemma}
For $x\geq 0$,
\begin{align}
\label{C+}
\left\Vert  C^+_{ \Sigma'\to\Sigma^+  }  (\calW_{x-})  \right\Vert_{L^2} &\leq \dfrac{c}{(1+x^2)^{1/2} }\norm{\calW_- }{H^1}\\
\label{C-}
\norm{ C^-_{ \Sigma'\to\Sigma^-  }  (\calW_{x+}) }{L^2} &\leq \dfrac{c}{ (1+x^2 )^{1/2}}\norm{\calW_+}{H^1}\\
\label{lower-c}
\norm{\calW_{x+}    }{L^2(\Sigma_\infty^+)} &\leq\dfrac{c}{(1+x^2)^{1/2}} \norm{\calW_+   }{H^1 } \\
\label{upper-c}
\norm{\calW_{x-}   }{L^2(\Sigma_\infty^-)} &\leq\dfrac{c}{(1+x^2)^{1/2}} \norm{\calW_-  }{H^1} \\
\label{CC}
 \norm{ (  \calC_{\calW  }  )^2 I}{L^2(\Sigma)} &\leq \dfrac{c}{(1+x^2)^{1/2}}\norm{\calW_+}{H^1}\norm{\calW_- }{H^1} 
\end{align}
\end{lemma}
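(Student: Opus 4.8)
The five bounds are the analogues on the augmented contour $\Sigma$ of the single-line estimate of Lemma \ref{lemma-decay} (cf.\ Lemma 2.3 of \cite{Zhou98}), and the plan is to reduce each of them to that one-dimensional Fourier computation. I would use three structural inputs: the triangular factorization $\calV=(I-\calW_-)^{-1}(I+\calW_+)$ of Proposition \ref{scattering data}, which records the $H^1$ membership of $\calW_\pm$ on every smooth arc of $\Sigma$; the vanishing $\calV_\pm(\pm S_\infty)=I$ of \eqref{calV}, i.e.\ $\calW_\pm(\lambda)=O(\lambda\mp S_\infty)$ near the self-intersection points; and, crucially, the fact that after the orientation reversal built into the modified contour of Figure \ref{fig:new.mod.contour}, $\calW_+$ is strictly lower triangular and $\calW_-$ strictly upper triangular on \emph{all} of $\Sigma$, so that the nonzero entries of $\calW_{x+}=e^{i\lambda x\ad\sigma}\calW_+$ carry the oscillatory factor $e^{2i\lambda x}$ and those of $\calW_{x-}$ the factor $e^{-2i\lambda x}$, each paired with the Cauchy projection under which it decays for $x\geq0$. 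I would treat the part of $\Sigma$ on $\bbR$ first, then the part on $\Sigma_\infty$, and finally assemble \eqref{CC}.

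\emph{The real part.} After the surgery the two triangular factors $\calW_\pm\restriction_\bbR$ glue at $\pm S_\infty$ with matching value $0$ (this is where the matching condition of Definition \ref{Hk} and the choice of $x_0$ are used), so $\calW_\pm\restriction_\bbR$ extends to a function of $H^1(\bbR)$. For \eqref{C+} with $\Sigma'=\Sigma^+=\bbR$, the nonzero entries of $C^+_\bbR(\calW_{x-})$ are $C^+_\bbR\bigl(e^{-2i\lambda x}(\calW_-)_{1k}\bigr)$; on the Fourier side $C^+_\bbR$ is the projection onto a half-line, $e^{-2i\lambda x}$ shifts the spectrum by $2x$, and for $x\geq0$ one is left with $\widehat{\calW}_-$ restricted to $\{|\eta|\gtrsim x\}$, whose $L^2$ norm is $\lesssim(1+x^2)^{-1/2}\norm{\calW_-}{H^1}$. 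This is exactly the proof of Lemma \ref{lemma-decay}; \eqref{C-} is the mirror statement with $C^-$, $e^{2i\lambda x}$ and $\calW_+$.

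\emph{The circular part and \eqref{CC}.} On $\Sigma_\infty^\pm$ the nonzero off-diagonal entries of $\calW_{x\pm}$ carry the factor $e^{\mp 2i(x-x_0)\lambda}$, of modulus $e^{\mp 2(x-x_0)\imag\lambda}$, which for $x\geq0$ is bounded uniformly and decays exponentially once $\imag\lambda$ is bounded away from $0$; on the two remaining neighbourhoods of $\pm S_\infty$, where $\imag\lambda$ is comparable to arclength, I would use $\calW_\pm(\lambda)=O(\lambda\mp S_\infty)$ together with $\int_0^\infty t^2e^{-ctx}\,dt\lesssim x^{-3}$, which gives \eqref{lower-c}--\eqref{upper-c} with room to spare; the arc pieces of the restricted transforms in \eqref{C+}--\eqref{C-} are handled the same way, using in addition the boundedness of $C^\pm$ on the decomposing algebra $H^1(\Sigma_\pm)$ \cite[Proposition 2.1]{Zhou89}. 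For \eqref{CC}, expand $(\calC_\calW)^2I=\calC_\calW\bigl(C^+\calW_{x-}+C^-\calW_{x+}\bigr)$ into four terms: the two terms whose two $\calW$-factors have the same triangular type vanish identically, since $C^\pm$ preserve the triangular pattern entrywise and a product of two like-triangular nilpotent matrices is zero; each of the other two terms is a product of a Cauchy transform $C^\mp(\calW_{x\pm})$, whose $L^2$ norm is $\lesssim(1+x^2)^{-1/2}\norm{\calW_\pm}{H^1}$ by \eqref{C+}--\eqref{C-}, with a factor $\calW_{x\mp}$ whose $L^\infty$ norm is $\lesssim\norm{\calW_\mp}{H^1}$ by Sobolev embedding, which produces $\norm{(\calC_\calW)^2I}{L^2(\Sigma)}\lesssim(1+x^2)^{-1/2}\norm{\calW_+}{H^1}\norm{\calW_-}{H^1}$.

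\emph{Main obstacle.} The genuinely delicate step is not any single Fourier estimate but the bookkeeping behind the assertion that the redefined $\calV_\pm$ of Figure \ref{fig:new.mod.contour} satisfy all the properties of Proposition \ref{scattering data}: one must verify that after the contour surgery the factors $\calW_\pm$ really do glue into $H^1$ functions on the pieces $\Sigma^\pm$ of Definition \ref{contour-Gamma} with the \emph{uniform} triangular type on which both the vanishing of the like-type terms in \eqref{CC} and the matching of the oscillatory factors to the Cauchy projections in \eqref{C+}--\eqref{C-} rest. This is exactly what the orientation reversal and the explicit factorizations computed in the proof of Proposition \ref{scattering data} are designed to supply, so the scaffolding is already in place, but it has to be invoked separately and carefully for each of the five inequalities.
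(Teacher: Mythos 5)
Your sketch is correct and is essentially the intended argument: the paper itself gives no proof here, deferring entirely to Lemma 2.9 of \cite{Zhou98}, and your reduction — half-line Fourier estimate on the real part after the orientation reversal makes $\calW_+$ uniformly lower and $\calW_-$ uniformly upper triangular, exponential decay of $e^{\mp 2x\imag\lambda}$ on the arcs combined with the vanishing of $\calW_\pm$ at $\pm S_\infty$, and the nilpotency-plus-$L^\infty$ splitting for $(\calC_\calW)^2I$ — is a faithful reconstruction of that proof. The only cosmetic overstatement is the claimed vanishing rate $O(\lambda\mp S_\infty)$ near the intersection points: an $H^1$ function vanishing at a point is only controlled by $|\lambda\mp S_\infty|^{1/2}\norm{\calW_\pm'}{L^2}$, so the endpoint integral is $\int_0^\delta t\,e^{-ctx}\,dt\lesssim x^{-2}$, which yields exactly the stated rate $(1+x^2)^{-1/2}$ rather than ``room to spare.''
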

\begin{proof}
See the proof of Lemma 2.9 in \cite{Zhou98}.
\end{proof}
\begin{proof}[Proof of Proposition \ref{prop-decay-1}]
The two previous lemmas provide the tools for estimating the decay of the potential $U$, recalling that $\mu$ appearing in \eqref{SIE-3} is equal to
$(I-\calC_{\calW})^{-1}I $.   
We will work with the following integral
\begin{equation}
\label{recon integral}
\int_\Sigma  \left( \left(  I -  \calC_{\calW}\right)^{-1} I \right) e^{i\lambda x\ad\sigma}(\calW_+ +\calW_+)d\lam=\int_1+\int_2+\int_3+\int_4
\end{equation}
where
\begin{equation}
\label{int-1}
\int_1=\int_\bbR \left( \calW_{x+} +\calW_{x-} \right)+\int_{\Sigma_\infty^+} \calW_{x+}+\int_{\Sigma_\infty^-} \calW_{x-}
\end{equation}
By  \cite[Lemma 2.9]{Zhou98} we conclude that the (2-1) and (3-1) entries of the integral above is in $H^{0,2}$ . The second integral
\begin{equation}
\label{int-2}
\int_2=\int_\Sigma \left( \calC_{\calW} \right) (\calW_{x+} +\calW_{x-})
\end{equation}
is diagonal thus makes no contribution to the reconstruction of $U$. For the third integral
\begin{align}
\label{int-3}
\int_3 &=\int_\Sigma \left(  (\calC_{\calW} )^2 I \right) (\calW_{x+} +\calW_{x-})\\
\nonumber
          &=\int_\Sigma \left( C^+\left(C^-\left(\calW_{x+}\right)  \right) \calW_{x-} \right)\calW_{x+}\\
          \nonumber
          &\quad+ \int_\Sigma \left( C^-\left(C^+\left( \calW_{x-} \right)  \right) \calW_{x+} \right)\calW_{x-}
\end{align}
we notice that the lower diagonal part is given by 
\begin{align*}
\int_{\Sigma_\infty^+} &\left( C^+_{\Sigma^- \to \Sigma}\left(C^-_{\Sigma \to \Sigma^-}\left( \calW_{x+} \right)  \right) \calW_{x-} \right)\calW_{x+} \\
&+\int_\bbR \left( C^+_{\Sigma^- \to \Sigma}\left(C^-_{\Sigma \to \Sigma^-}\left( \calW_{x+} \right)  \right) \calW_{x-} \right) C^-_\bbR\left( \calW_{x+} \right)
\end{align*}
and from \eqref{C+} - \eqref{lower-c}  we conclude that
$$\left\vert \left( \int_3 \right)_{lower} \right\vert \leq \dfrac{c}{1+x^2}.$$ 
Finally we set $$g=(1-\calC_{\calW } )^{-1} \left(  (  \calC_{\calW  }  )^2 I \right)   $$
and write
\begin{equation}
\left\vert  \int_4\right\vert = \left\vert \int_\Sigma \left[  \left( C^+g ( \calW_{x-} ) \right) \calW_{x+} +\left( C^- g \left(  \calW_{x+}\right)  \right)  \calW_{x-}  \right]  \right\vert
\end{equation}
Again we notice that the lower diagonal part is given by 
\begin{align*}
\int_{\Sigma_\infty^+} \left( C^+ g \left(   \calW_{x-} \right)  \right) \calW_{x+}
+\int_\bbR  \left( C^+_{\Sigma^-\to\bbR} ~ g \left(  \calW_{x-} \right)  \right) C_\bbR^- \left( \calW_{x+}\right) 
\end{align*}
and from \eqref{C+}-\eqref{lower-c} and \eqref{CC} we conclude that 
$$\left\vert \left( \int_4 \right)_{lower} \right\vert \leq \dfrac{c}{1+x^2}.$$
The estimate on $(-\infty, 0]$ can be obtained by considering the Riemann-Hilbert problem with jump matrix characterized in Proposition \ref{scattering data tilde}. This concludes the proof of Proposition \ref{prop-decay-1}.
\end{proof}
Following the same argument in the proof of Proposition \ref{smooth}, we can establish the following proposition:
\begin{proposition}
\label{prop-smooth-1}
Given the jump matrix $V(\lambda)$ characterized by Proposition \ref{scattering data},   $u(x), v(x)\in H^{1}(\bbR)$.
\end{proposition}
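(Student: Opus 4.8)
The plan is to carry the argument of Proposition~\ref{smooth} over to the augmented contour, inserting one extra power of $\lambda$ and checking that the enhanced $H^{1,1}$ regularity furnished by Proposition~\ref{scattering data} (and Proposition~\ref{scattering data tilde} for $x\le 0$) absorbs it. Recall from the proof of Proposition~\ref{prop-decay-1} that $\omega$ makes no contribution to the reconstruction integral, so throughout we work with the reduced jump $\calV=(I-\calW_-)^{-1}(I+\calW_+)$ on the modified contour of Figure~\ref{fig:new.mod.contour}, with $\calW_\pm\in H^{1,1}(\Sigma_\pm)$ and $\calW_\pm(\pm S_\infty)=0$, and with $\mu=(I-\calC_{\calW})^{-1}I$ solving \eqref{SIE-3}. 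The first step is to record the differential identity, the analogue of \eqref{mu-diff},
\[
\frac{d}{dx}\mu(x,\lambda)=i\lambda\,\ad\sigma\,\mu(x,\lambda)+U(x)\mu(x,\lambda),
\]
where $U$ is the potential reconstructed through \eqref{reconstruction}; this is the same computation as in \cite[Section~8]{Zhou89} and is insensitive to the self-intersections of $\Sigma$.

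Next I would differentiate the reconstruction integral. Since $\ad\sigma$ is a derivation and commutes with $e^{i\lambda x\ad\sigma}$, the identity above gives
\[
\frac{d}{dx}\int_\Sigma \mu\, e^{i\lambda x\ad\sigma}\bigl(\calW_++\calW_-\bigr)\,d\lambda=\int_\Sigma\bigl(i\lambda\,\ad\sigma+U(x)\bigr)\Bigl(\mu\, e^{i\lambda x\ad\sigma}\bigl(\calW_++\calW_-\bigr)\Bigr)\,d\lambda .
\]
By \eqref{reconstruction}, proving $u,v\in H^1(\bbR)$ reduces to showing that the (2-1) and (3-1) entries of the right-hand side lie in $L^2_x$, and I would split it into the $i\lambda\,\ad\sigma$-piece and the $U$-piece. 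For the $U$-piece, the triangular structure of $U$ forces its $(2\text{-}1)$ and $(3\text{-}1)$ entries to be $-u^*(x)N_{11}(x)$ and $-v^*(x)N_{11}(x)$, where $N$ denotes the inner integral; by Proposition~\ref{prop-decay-1} we have $u,v\in H^{0,1}(\bbR)\subset L^2(\bbR)$, while $N$ is uniformly bounded in $x$ (by Lemma~\ref{resolvent-3} together with the estimates \eqref{C+}--\eqref{CC}), so Cauchy--Schwarz puts this piece in $L^2_x$.

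For the $i\lambda\,\ad\sigma$-piece, the only change from the proof of Proposition~\ref{prop-decay-1} is the extra factor $\lambda$. On the compact circle $\Sigma_\infty$ this factor is bounded, so those contributions (including the $e^{i\lambda(x-x_0)\ad\sigma}$ prefactors) are handled exactly as in the splitting \eqref{recon integral}. On the real line, Proposition~\ref{scattering data} gives $\calW_\pm\in H^{1,1}(\Sigma_\pm)$, hence $\lambda\calW_\pm\in H^1$; replacing $\calW_\pm$ by $\lambda\calW_\pm$ throughout \eqref{recon integral} and the bounds \eqref{C+}--\eqref{CC} leaves every estimate intact with $\norm{\calW_\pm}{H^1}$ replaced by $\norm{\lambda\calW_\pm}{H^1}\lesssim\norm{\calW_\pm}{H^{1,1}}$, so the analogues of $\int_1,\int_3,\int_4$ are again $\bigO{(1+x^2)^{-1}}$ in the relevant entries while $\int_2$ stays diagonal. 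This yields the estimate for $x\ge 0$; the estimate for $x\le 0$ follows identically from the Riemann--Hilbert problem with the tilde jump matrix of Proposition~\ref{scattering data tilde}.

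\textbf{Main obstacle.} The delicate point is the bookkeeping at the self-intersection points $(\pm S_\infty,0)$ after the reduction: one must be sure that the factorization $\calV=\omega_-^{-1}\calV_-^{-1}\calV_+\omega_+$ and the subsequent contour augmentation preserve both membership in $H^{1,1}(\Sigma_\pm)$ and the vanishing $\calW_\pm(\pm S_\infty)=0$, so that the $\lambda$-weighted versions of the Cauchy-projection estimates \eqref{C+}--\eqref{upper-c} genuinely hold on the contours $\Sigma'\to\Sigma^\pm$ of Definition~\ref{contour-Gamma}. Since the rational approximants $\omega_\pm$ were constructed in the proof of Proposition~\ref{prop-decay-1} to lie in $H^{1,1}(\partial\Omega_j)$ and $H^{1,1}(\Sigma_\pm)$ is an algebra compatible with the matching conditions of Definition~\ref{Hk}, this goes through, but it is the step requiring the most care.
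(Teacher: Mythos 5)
Your proposal is correct and follows essentially the same route as the paper, which simply invokes the argument of Proposition \ref{smooth}: the differential identity for $\mu$, the split of the differentiated reconstruction integral into the $i\lambda\,\ad\sigma$ piece (absorbed by the $H^{1,1}$ regularity of Proposition \ref{scattering data}) and the $U$ piece (handled by Proposition \ref{prop-decay-1} and Cauchy--Schwarz), with the $x\le 0$ case treated via Proposition \ref{scattering data tilde}. Your discussion of the bookkeeping at the self-intersection points supplies detail the paper leaves implicit, but it is the same argument.
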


\section*{Acknowledgements}
The author would like to thank Wang Deng-shan for suggesting this problem and Chen Gong for useful discussions. The author would also like to thank Catherine Sulem for many valuable remarks and corrections.

\end{document}